\numberwithin{equation}{section}
\theoremstyle{plain}
\newtheorem{theorem}{Theorem}[section]
\newtheorem{proposition}[theorem]{Proposition}
\newtheorem{lemma}[theorem]{Lemma}
\theoremstyle{remark}
\newtheorem{definition}{Definition}
\newtheorem{remark}{Remark}
\newtheorem{assumption}{Assumption}
\newcommand{\N}{\mathbb{N}}
\newcommand{\R}{\mathbb{R}}
\newcommand{\PP}{\mathsf{P}}
\newcommand{\EE}{\mathsf{E}}
\newcommand{\Var}{\mathsf{Var}}
\newcommand{\1}{\mathds{1}}
\newcommand{\rd}{\mathrm{d}}
\newcommand{\domain}{\mathcal{D}}
\newcommand{\voisin}{\mathcal{V}}
\newcommand{\pwrisk}{R_n}
\newcommand{\finfty}{\left\|f\right\|_{\voisin(\rho)}}
\begin{document}

\begin{frontmatter}
%%%%%%%%%%%%%%%%%%%%%%%%%%%%%%%%%%%%%%%%%%%%%%
%%                                          %%
%% Enter the title of your article here     %%
%%                                          %%
%%%%%%%%%%%%%%%%%%%%%%%%%%%%%%%%%%%%%%%%%%%%%%
\title{A new adaptive local polynomial density estimation procedure on complicated domains}
\runtitle{Local polynomial density estimation on complicated domains}

\begin{aug}
%%%%%%%%%%%%%%%%%%%%%%%%%%%%%%%%%%%%%%%%%%%%%%%
%% ORCID can be inserted by command:         %%
%% \orcid{0000-0000-0000-0000}               %%
%%%%%%%%%%%%%%%%%%%%%%%%%%%%%%%%%%%%%%%%%%%%%%%
\author[a1]{\inits{K.B.}\fnms{Karine}~\snm{Bertin}\ead[label=e1]{karine.bertin@uv.cl}\orcid{0000-0002-8167-137X}}
\author[a2]{\inits{N.K.}\fnms{Nicolas}~\snm{Klutchnikoff}\ead[label=e2]{nicolas.klutchnikoff@univ-rennes2.fr}\orcid{0000-0001-9349-0771}}
\author[a3,a4]{\inits{F.O.}\fnms{Fr\'ed\'eric}~\snm{Ouimet}\ead[label=e3]{frederic.ouimet2@mcgill.ca}\orcid{0000-0001-7933-5265}}
%%%%%%%%%%%%%%%%%%%%%%%%%%%%%%%%%%%%%%%%%%%%%%
%% Addresses                                %%
%%%%%%%%%%%%%%%%%%%%%%%%%%%%%%%%%%%%%%%%%%%%%%
\address[a1]{CIMFAV-INGEMAT, Universidad de Valpa{r}a\'{\i}so, Valpara\'iso, Chile\printead[presep={,\ }]{e1}}
\address[a2]{Univ Rennes, CNRS, IRMAR-UMR 6625, F-35000 Rennes, France\printead[presep={,\ }]{e2}}
\address[a3]{Centre de recherches math\'ematiques, Universit\'e de Montr\'eal, Montreal, Canada}
\address[a4]{Department of Mathematics and Statistics, McGill University, Montreal, Canada\printead[presep={,\ }]{e3}}
\end{aug}

\begin{abstract}
This paper presents a novel approach for pointwise estimation of multivariate density functions on known domains of arbitrary dimensions using nonparametric local polynomial estimators. Our method is highly flexible, as it applies to both simple domains, such as open connected sets, and more complicated domains that are not star-shaped around the point of estimation. This enables us to handle domains with sharp concavities, holes, and local pinches, such as polynomial sectors. Additionally, we introduce a data-driven selection rule based on the general ideas of Goldenshluger and Lepski. Our results demonstrate that the local polynomial estimators are minimax under a $L^2$ risk across a wide range of H\"older-type functional classes. In the adaptive case, we provide oracle inequalities and explicitly determine the convergence rate of our statistical procedure. Simulations on polynomial sectors show that our oracle estimates outperform those of the most popular alternative method, found in the \texttt{sparr} package for the \texttt{R} software. Our statistical procedure is implemented in an online \texttt{R} package which is readily accessible.
\end{abstract}

\begin{keyword} % in alphabetical order
\kwd{Adaptive estimation}
\kwd{complicated domain}
\kwd{concave domain}
\kwd{local polynomial}
\kwd{minimax}
\kwd{nonparametric density estimation}
\kwd{oracle inequality}
\kwd{pinched domain}
\kwd{pointwise risk}
\kwd{polynomial sector}
\end{keyword}

\end{frontmatter}

\section{Introduction}\label{sec:introduction}

    In practice, estimating a probability density function near or at the boundary of its support $\domain$ is often challenging. This issue is particularly prominent in standard kernel density estimators, which suffer from a well-known bias that negatively impacts their performance. In situations where $\domain$ represents a geometrically `simple' domain in $\R^d$, this behavior is more indicative of limitations in these statistical procedures rather than an inherently difficult estimation problem. Consequently, dedicated estimators have been devised to reduce or eliminate the bias term, providing more accurate density estimates.

    For instance, Bernstein estimators, introduced by \cite{Vitale1975} and studied further among others by \cite{GawronskiStadtmuller1981,Petrone1999,Ghosal2001,BabuChaubey2006,PetroneWasserman2002}, utilize a specific choice of discrete kernel function whose shape adapts locally to the support of the density and leads to reduced bias near the boundaries. Asymmetric kernel estimators, put forward independently by \cite{AitchisonLauder1985} and \cite{Chen1999}, extend the same idea to smooth kernels. The reflection method, proposed by \cite{Schuster1985} and further investigated by \cite{ClineHart1991}, involves extending the support of the density function symmetrically beyond the observed data range and incorporating reflected data points into the estimation process. Boundary kernel estimators, initially proposed by \cite{GasserMuller1979} and further refined by \cite{GasserMullerMammitzsch1985,Muller1991,Jones1993,ZhangKarunamuni1998,ZhangKarunamuni2000}, assign higher weights to data points located near the boundaries, effectively giving greater emphasis to the boundary regions during estimation and improving accuracy. Local polynomial smoothers with variable bandwidths, introduced by \cite{FanGijbels1992} in the regression context, see also \cite{FanGijbels1996,ChengFanMarron1997}, fit low-degree polynomials to local neighborhoods of data points, adaptively adjusting the polynomial degree and bandwidth to mitigate boundary bias. Other notable techniques include smoothing splines \citep{Gu1993,GuQiu1993}, generalized jackknifing \citep{Jones1993,JonesFoster1996}, and the transformation technique \citep{MarronRuppert1994,RupperCline1994}, which applies a transformation to the data before smoothing on a more suited domain. A combination that leverages the benefits of both reflection and transformation was proposed by \cite{ZhangKarunamuniJones1999}. For a modern review of kernel density estimation methods and bandwidth selection for spatial data, refer to \cite{DaviesMarshallHazelton2018}.

    In spite of the remarkable strides made throughout the years to alleviate the boundary bias of kernel density estimators, it often comes at the cost of producing other issues such as estimators that lack local adaptivity (i.e., are sensitive to outliers, generate spurious bumps, etc.), inadequate data-driven bandwidth selection rules (e.g., plug-in methods with reference/pilot densities that spill over boundaries), or density estimates that are not bona fide density functions (i.e., that are negative in some regions or do not integrate to one). Only recently, some methods have been able to address all these concerns at once.

    For example, \cite{BotevGrotowskiKroese2010} propose an adaptive kernel density estimator which is the solution, in a space-scale setting, to the Fokker-Planck partial differential equation (PDE) associated with an It\^o diffusion process having the empirical density function as initial condition. The scale variable plays the role of the bandwidth and the shape specifics of the domain are taken into account directly using the boundary conditions of the PDE. Their proposed estimator is locally adaptive, has a bandwidth selection procedure which is free of the so-called normal reference rules (and thus is truly nonparametric), deals well with boundary bias and is always a bona fide density function. Further, the overall computational cost remains low. Yet, as pointed out by \cite{Ferraccioli_et_al2021}, their estimator is unable to handle domains with complicated geometries.

    In recent efforts to tackle the above concerns regarding kernel density estimators, specific focus has been directed towards estimating multivariate density functions supported on domains with complicated geometries. Notably, these efforts include Gaussian field approaches \cite{LingrenRueLindstrom2011,Bakka_et_al2019,Niu_et_al2019}, PDE regularization schemes \cite{Azzimonti_et_al2014,Azzimonti_et_al2015,Sangalli2021,Ferraccioli_et_al2021,Arnone_et_al2022}, shape constraints \cite{Feng_et_al2021,XuSamworth2021}, soap film smoothing \cite{WoodBravingtonHedley2008}, spline methods \cite{Ramsay2002,WangRanalli2007,LaiSchumaker2007,GuillasLai2010,SangalliRansayRamsay2013,LaiWang2013,MillerWood2014,ZhouPan2014,Yu_et_al2021}, diffusion kernels \cite{BarryMcIntyre2011,McSwigganBaddeleyNair2017,McIntyreBarry2018,BarryMcIntyre2020}, and other hybrid methods such as the complex region spatial smoother (CReSS) of \cite{Scott-Hayward_et_al2015} based on improved geodesic distance estimation.

    Unfortunately, all these methods are either intrinsically adapted for global estimation instead of pointwise estimation (and thus cannot always uphold the best choice of bandwidth near specific points), or they have only been developed, implemented and studied for spatial data in two or three dimensions.
    For instance, the state of the art PDE regularization method on Euclidean spaces is the one by \cite{Ferraccioli_et_al2021}. In that paper, the authors introduce a log-likelihood approach with Laplacian operator regularization that penalizes high local curvatures and thus controls the global roughness of the estimates. Their method can handle complicated shapes, sharp concavities and holes. They also improve on the method of \cite{BotevGrotowskiKroese2010} by discretizing the aforementioned Fokker-Planck equation and solving numerically using a forward Euler integration scheme, which allows for complicated domain shapes. The problem is that their global choice of regularization parameter limits refinements in the vicinity of any particular point in the domain. Furthermore, their general method relies crucially on a triangulation of the space, which is only numerically viable in low dimensions. The other approaches have the same issue, as they also rely on a global regularization or a low domain dimension. For the few methods in arbitrary dimensions (e.g., \cite{Niu_et_al2019, BarryMcIntyre2020}), some face practical challenges or lack theoretical development.

    The present paper addresses these gaps in the literature by introducing a new adaptive nonparametric local polynomial estimator suitable for the pointwise estimation of multivariate density functions supported on known domains of arbitrary dimensions and investigating its theoretical properties and practical implementation. Our method is highly flexible, as it applies to both simple domains, such as open connected sets, and more complicated domains that are not star-shaped around the point of estimation. This enables us to deal with domains containing sharp concavities, holes, and local pinches, such as polynomial sectors. To elaborate, our approach involves estimating the density function locally using the minimizer of a continuous kernel-weighted least-squares problem on a space of polynomials. Our selection procedure follows the general ideas of \cite{Lepski1991adaptive} and \cite{GoldenshlugerLepski2008,GoldenshlugerLepski2009,GoldenshlugerLepski2011,GoldenshlugerLepski2014}. It consists of jointly optimizing the polynomial degree and bandwidth over a discrete set and selecting the pair that minimizes the sum of upper bounds on the bias and a penalized version of the standard deviation. Importantly, our estimator does not require any tuning parameters, making it genuinely nonparametric. Moreover, it addresses the classical issues of kernel density estimators mentioned earlier, as it is locally adaptive, exhibits relatively small boundary bias regardless of the domain's shape, and is asymptotically a bona fide density function. For earlier advancements in density estimation on simple bounded domains using a similar Goldenshluger-Lepski selection procedure, refer to \cite{BertinKlutchnikoff2014,BertinElKoleiKlutchnikoff2019}, and also to \cite{BertinKlutchnikoff2017,Bertin_et_al2020} under the assumption of weakly dependent observations.

    Although our bandwidth selection procedure differs significantly, a similar concept was recently developed by \cite{CattaneoJanssonMa2020JASA} for estimating univariate density functions using local polynomials. Their approach involves taking the slope factor of the linear monomial in a kernel-weighted $L^2$ projection of the empirical cumulative distribution function (cdf) onto a polynomial space. In simpler terms, they project the cdf and identify the derivative of the projection (i.e., the density estimator), while our method directly projects the density function itself. Their estimator is boundary adaptive, does not require prior knowledge of the boundary points, and always yields a valid density function. Moreover, their implementation is simple, requiring no specific data modifications or additional tuning parameters. For the \texttt{R} software implementation, refer to \cite{CattaneoJanssonMa2022JSS}, and for an extension of their method to conditional univariate density functions, see \cite{CattaneoChandakJanssonMa2024}. However, one serious limitation of their method is its unclear generalizability to multivariate density functions.

%    \begin{remark}\upshape
%        In private email communications, Cattaneo, Jansson, and Ma informed us that they had previously considered a multivariate version of their estimator but were unable to determine the asymptotic variance expression. Consequently, they could not implement the pointwise approximate MSE-optimal bandwidth selection procedure. We attempted to address this issue ourselves but were unsuccessful. Thus, the development of a multivariate version of their estimator remains an open problem, meriting further research.
%    \end{remark}

    Here is an outline of the paper. Section~\ref{sec:framework} provides prerequisite definitions and information on the pointwise $L^2$ risk measure, domains and H\"older classes considered for density functions, as well as the concept of adaptive rate of convergence. Section~\ref{sec:statistical.procedure} presents a detailed description of our statistical procedure, which involves a kernel-weighted $L^2$ projection of the density function onto a polynomial space, and the application of the Goldenshluger-Lepski method for the joint selection of the polynomial degree and bandwidth. Section~\ref{sec:main.results} states our main results. For the base local polynomial estimator, this corresponds to explicit bounds on its bias and variance, and its minimaxity on simple and complicated domains. For our full statistical procedure (i.e., including the selection procedure), we derive an oracle-type inequality and the estimator is shown to achieve the optimal adaptive rate of convergence on simple and complicated domains. To aid understanding, the results for complicated domains are specifically stated for polynomial sectors. The case of unknown compact convex domains is also briefly discussed in Section~\ref{sec:unknown.domains}. Section~\ref{sec:simulations} complements the theoretical results with simulations on polynomial sectors. Our oracle estimates are shown to outperform those of the most popular alternative method, found in the \texttt{sparr} package \cite{Davies_et_al2024} for the \texttt{R} software. For convenience, our statistical procedure is implemented in an online \texttt{R} package called \texttt{densityLocPoly}, see \cite{BertinKlutchnikoffOuimet2023Rcode}. The proofs are all deferred to Section~\ref{sec:proofs}.

\section{Framework}\label{sec:framework}

    Define the domain $\domain$ as the closure of a nonempty open subset of $\R^d$ endowed with the sup norm, denoted by $\|\cdot\|_{\infty}$. Our focus will be on studying our estimator locally at a point $t\in \domain$, inside its surrounding neighborhoods of radius $h\in (0,\infty)$,
    \begin{equation*}
        \voisin(h) = \left\{u\in \R^d : t+u\in \domain ~\text{and}~ \|u\|_{\infty} \leq h\right\} = (\domain - t) \cap (h \Delta),
    \end{equation*}
    where $\Delta = [-1,1]^d$.
    We assume that there exists a parameter $\rho\in (0,e^{-1}]$ such that the set $\voisin(\rho)$ is a neighborhood of the origin for the topology on $\domain$ inherited from $\R^d$.
    Since $\domain$, $d$, $t$ and $\rho$ are fixed throughout the paper, the dependence on these quantities is often omitted for readability.

    For $n\in \N=\{1,2,\ldots\}$, let $\mathbb{X}_n = (X_1,\ldots,X_n)$ be a vector of $n$ independent copies of a random variable $X$ which is supported on $\domain$ and has a probability density function $f$ with respect to the Lebesgue measure on $\R^d$, hereafter denoted by $\mathrm{Leb}(\cdot)$. Our goal is to estimate the density function $f$ at the fixed point $t\in \domain$. By estimator, we mean any map $\tilde{f} = S \circ \mathbb{X}_n$ such that $S:\domain^n \to \smash{\R^{\R^d}}$ is Borel-measurable.
    The accuracy of such an estimator is measured by the pointwise risk
    \begin{equation*}
        \pwrisk(\tilde{f}, f) = \left(\EE\left[\big\{\tilde{f}(t) - f(t)\big\}^2\right]\right)^{1/2},
    \end{equation*}
    where $\EE$ denotes the expectation with respect to the probability measure $\PP$ of the random sample $\mathbb{X}_n$.

    \begin{remark}\label{rem:decomposition}\upshape
        Utilizing a pointwise $L^2$ risk measure is not inherently essential for deriving our findings. However, this choice facilitates the mathematical analysis across various proofs by decomposing the squared risk into two distinct components: the squared bias and the variance:
        \begin{equation*}
            \big\{\pwrisk(\tilde{f}, f)\big\}^2 = \left[\EE\big\{\tilde{f}(t)\big\} - f(t)\right]^2 + \Var\big\{\tilde{f}(t)\big\}.
        \end{equation*}
        This breakdown simplifies the determination of a suitable bandwidth parameter when seeking the optimal convergence rate of our statistical procedure. Such arguments feature prominently in the proofs of our minimax and adaptivity results; see Propositions~\ref{prop:minimaxity.simple.domains}~and~\ref{prop:minimaxity.polynomial.sectors}, as well as Theorems~\ref{thm:adaptivity.simple.domains}~and~\ref{thm:adaptivity.polynomial.sectors}. More generally, one could explore a pointwise $L^p$ risk measure for a fixed $p\geq 1$. However, this approach necessitates additional steps at various junctures to accommodate the imperfect decomposition:
        \begin{equation*}
            \EE\left[\big\{\tilde{f}(t) - f(t)\big\}^p\right] \leq 2^{p-1} \times \left\{\left|\EE\big\{\tilde{f}(t)\big\} - f(t)\right| + \EE\left(\big[\tilde{f}(t) - \EE\big\{\tilde{f}(t)\big\}\big]^p\right)\right\}.
        \end{equation*}
        For instance, we would need to apply Marcinkiewicz–Zygmund inequalities \citep[p.~498]{MR1368405} to estimate the second expectation on the right-hand side in terms of the variance. Modifications to the proof of Theorem~\ref{thm:oracle.result} would also be required, shifting the focus from $\EE(T^2)$ to $\EE(T^p)$, etc. While it is possible to consider a more general risk measure, doing so would inevitably convolute the mathematical methods and ideas used in deriving our results with additional technical intricacies. Therefore, this avenue is not further explored here and is left open for future investigation.
    \end{remark}

    Since there is no ambiguity, we use $\PP$ and $\EE$ instead of the more cumbersome notations $\PP_{\!f,n}$ and $\EE_{f,n}$. However, we will maintain the dependency on $n$ for the pointwise risk $\pwrisk$ as we will investigate its asymptotic behavior in Section~\ref{sec:main.results}. Specifically, we will study the minimax and adaptive pointwise estimation of $f$ over a large collection of H\"older-type functional classes. The remainder of this section covers the fundamental concepts that are used in this paper.

\subsection{H\"older-type classes}

    First, consider a collection of polynomials, each having a degree which is less than or equal to a specified integer.

    \begin{definition}
        Let $m\in \N_0 = \N\cup\{0\}$ and define
        \begin{equation*}
            \mathcal{P}_{\!m} = \operatorname{Span}\left(\varphi_{\alpha} : \alpha\in \N_0^d ~\text{and}~ |\alpha| \leq m\right),
        \end{equation*}
        where, for any $u\in \R^d$, we have the monomials
        \begin{equation*}
            \varphi_{\alpha}(u) = \prod_{j=1}^d u_j^{\alpha_j}
            \quad \text{with} \quad
            \lvert\alpha\rvert = \sum_{j=1}^d \alpha_j.
        \end{equation*}
        In particular, note that $\varphi_0 \equiv 1$.
    \end{definition}

    Second, for any smoothness parameter $s\in (0,\infty)$ and any Lipschitz constant $L\in (0,\infty)$, define the H\"older-type functional class $\Sigma(s,L)$ inside which a density function $f$ can be approximated by a polynomial of prescribed degree in a neighborhood of $t\in \domain$.

    \begin{definition}\label{def:Sigma.s.L}
        Let $(s,L)\in (0,\infty)^2$ be given.
        A density function $f:\domain\to\R$ belongs to the H\"older class $\Sigma(s,L)$ if there exists a polynomial $q\in \mathcal{P}_{\llfloor s \rrfloor}$ such that, for any $u\in \voisin(\rho)$,
        \begin{equation}\label{eq:approx-holder}
            |f(t+u) - q(u) | \leq L\|u\|_{\infty}^s.
        \end{equation}
        Here, $\llfloor s \rrfloor$ denotes the greatest integer less than $s$. In particular, $\llfloor s \rrfloor=s-1$ if $s\in \N$.
    \end{definition}

    \begin{remark}\upshape
        The functional class $\Sigma(s,L)$ is not commonly used in statistics.
        However, we believe that it offers more flexibility than classical H\"older spaces in the context of a domain $\domain$ with a complicated boundary. A few comments are in order:
        \begin{enumerate}
            \item Knowing that a density function $f$ belongs to the H\"older class $\Sigma(s,L)$ only provides local information (in a neighborhood of $t$) about the regularity of $f$. Note also that the polynomial $q$  satisfies $q(0) = f(t)$.
            \item If the partial derivatives $\smash{D^{\alpha} f = {\partial^{|\alpha|} f}/{\partial x_1^{\alpha_1}\cdots\partial x_d^{\alpha_d}}}$ exist for all $|\alpha|\leq \llfloor s \rrfloor$ and if the interior of $\voisin(\rho)$, in the topology of $\R^d$, is star-shaped with respect to the origin, then the Taylor polynomial of $f$ at $t$ can be substituted for $q$ in~\eqref{eq:approx-holder}. In particular, this is true if the interior of $\voisin(\rho)$ is an open convex subset of $\R^d$ that contains the origin.
            \item The existence of $q$ does not imply the existence of directional derivatives for $f$. In particular, for some points on the boundary of the domain $\domain$, the partial derivatives (in the directions given by the canonical basis) may not even make sense. Let us give two simple examples of densities defined on a disk sector to illustrate these assertions. Define
                \begin{equation*}
                    \domain = \left\{(r\cos\theta,r\sin\theta)\in \R^2 : 0\leq r\leq 1, \frac{\pi}{6}\leq\theta\leq\frac{\pi}{3}\right\}.
                \end{equation*}
                The density function $f_1(x,y) = 12/\pi\times\1_{\domain}(x,y)$ is constant on $\domain$, but the partial derivatives do not exist at $0\in \domain$. Similarly, the density function $f_2(x,y) = A g(x+y) \1_{\domain}(x,y)$, where $g(u) = 1 + u + u^2 + u^3\sin(1/u)$ and $A$ is a normalization constant, can be approximated in a neighborhood of $0\in \domain$  by a polynomial of degree $2$, even if no directional derivatives of order $2$ exist at the origin.
        \end{enumerate}
    \end{remark}

    \begin{remark}\upshape
        Because the degrees of the polynomials in the basis $\mathcal{P}_{\llfloor s \rrfloor}$ are restricted by the value $\llfloor s \rrfloor$ across all dimensions, the functional class $\Sigma(s,L)$ can be categorized as isotropic. One might ponder the possibility of defining an anisotropic extension of $\Sigma(s,L)$ to provide greater flexibility. Here is one approach: For any integers $m_1,\ldots,m_d\in \N_0$, define the new polynomial basis
        \begin{equation*}
            \mathcal{P}_{\!m_1,\ldots,m_d} = \operatorname{Span}\left(\varphi_{\alpha} : \alpha\in \N_0^d ~\text{and}~ \alpha_i \leq m_i ~~\forall i\in \{1,\ldots,d\}\right).
        \end{equation*}
        Then, for any smoothness parameters $s_1,\ldots,s_d\in (0,\infty)$ and any Lipschitz constant $L\in (0,\infty)$, let the new anisotropic H\"older class $\Sigma(s_1,\ldots,s_d,L)$ be defined by the polynomials $q\in \smash{\mathcal{P}_{\llfloor s_1 \rrfloor, \ldots, \llfloor s_d \rrfloor}}$ such that, for any $u\in \voisin(\rho)$,
        \begin{equation*}
            |f(t+u) - q(u)| \leq L (|u_1|^{s_1} + \dots + |u_d|^{s_d}).
        \end{equation*}
        This new definition would naturally lead in Section~\ref{sec:statistical.procedure} to the adoption of a more general parameter $\gamma = (m_1,\ldots,m_d,h_1,\ldots,h_d)$ for our adaptive statistical procedure, where the $m_i$'s and $h_i$'s specify the estimator's smoothness and scale of estimation in each dimension, respectively. Consequently, numerous technical adjustments would be necessary in the statements of the results and their proofs, such as the additivity of the bias terms $\smash{h_i^{\beta_{m_i}(s_i)}}$ in Proposition~\ref{prop:control.bias}. While the conceptual framework would remain largely unchanged, these technical modifications would obscure the mathematical methodologies and ideas underpinning the derivation of the new results. This avenue is not further explored here, remaining open for future research. For recent advancements in multivariate density estimation within an anisotropic framework, refer to, e.g., \cite{Rebelles2015,BertinElKoleiKlutchnikoff2019,LiuWu2019,AmmousDedeckerDuval2024,VaretLacourMassartRivoirard2023}.
    \end{remark}

\subsection{Adaptive estimation}

    As mentioned at the beginning of Section~\ref{sec:framework}, we are interested in the pointwise adaptive estimation, in a minimax framework, of the density function $f$ over the collection of H\"older-type functional classes $\Sigma(s,L)$, where the nuisance parameter $(s,L)$ is assumed to belong to $\mathcal{K} = (0,\infty)^2$.
    Since the results of \cite{Lepski1991adaptive}, it is well-known that, in this context, it is impossible to construct a single estimation procedure $f_n^{\star}$ which attains the minimax rate of convergence
    \begin{equation}\label{eq:minimax.rate}
        N_n(s,L) = \inf_{\tilde{f}_n} \sup_{f\in \Sigma(s,L)}\pwrisk(\tilde{f}_n, f)
    \end{equation}
    simultaneously for all functional classes, i.e., which satisfies
    \begin{equation}\label{eq:minimax.rate.attained}
        \sup_{f\in \Sigma(s,L)}\pwrisk(f_n^{\star}, f) \lesssim N_n(s,L), \quad \text{for all } (s,L)\in \mathcal{K}.
    \end{equation}
    In~\eqref{eq:minimax.rate}, the infimum is taken over all possible estimators $\tilde{f}_n$ of $f$, while the general notation $u_n \lesssim v_n$ used in~\eqref{eq:minimax.rate.attained} means that $u_n,v_n \geq 0$ and $\limsup_{n\to\infty} u_n / v_n <\infty$.

    It is therefore necessary to define a specific notion of adaptive rate of convergence (ARC) before trying to construct an optimal estimation procedure. Several definitions were proposed in the last decades, see, e.g., \cite{Lepski1991adaptive}, \cite{Tsybakov1998}, \cite{K-MMS-2014} and \cite{Rebelles2015}. In this paper we adopt the definition of \cite{K-MMS-2014}, which coincides in our framework with the refined version introduced by \cite{Rebelles2015}. In order to be self-contained, we recall the definition of admissibility of a collection of normalizations and the notion of ARC employed in these two papers. It is always assumed that collections such as $\phi = \{\phi_n(s,L) : (s,L)\in \mathcal{K}, n\in \N\}$ have nonnegative components, i.e., $\phi_n(s,L)\geq 0$ for all $(s,L)\in \mathcal{K}$ and all $n\in \N$.

    \begin{definition}\label{def:admissibility}
        A collection $\phi = \{\phi_n(s,L) : (s,L)\in \mathcal{K}, n\in \N\}$ is said to be admissible if there exists an estimator $\tilde{f}_n$ such that
        \begin{equation*}
            \sup_{f\in \Sigma(s,L)}\pwrisk(\tilde{f}_n, f)\lesssim \phi_n(s,L),
            \quad
            \text{for all } (s,L)\in \mathcal{K}.
        \end{equation*}
    \end{definition}

    Given two collections $\phi = \{\phi_n(s,L) : (s,L)\in \mathcal{K}, n\in \N\}$ and  $\psi = \{\psi_n(s,L) : (s,L)\in \mathcal{K}, n\in \N\}$, we define two subsets of $\mathcal{K}$, namely
    \begin{equation*}
        [\psi \ll \phi] = \left\{(s,L)\in \mathcal{K} : \lim_{n\to\infty} \frac{\psi_n(s,L)}{\phi_n(s,L)} = 0\right\},
    \end{equation*}
    and
    \begin{equation*}
        [\psi \ggg \phi] = \left\{(s,L)\in \mathcal{K} : \limsup_{n\to\infty} \frac{\psi_n(s,L)}{\phi_n(s,L)}\times\frac{\psi_n(s',L')}{\phi_n(s',L')} =\infty, ~~ \forall (s',L')\in[\psi \ll \phi]\right\}.
    \end{equation*}
    Equipped with these notations, the notion of ARC is defined as follows.

    \begin{definition}\label{def:ARC}
        An admissible collection $\phi = \{\phi_n(s,L) : (s,L)\in \mathcal{K}, n\in \N\}$ is called an ARC if for any other admissible collection $\psi = \{\psi_n(s,L) : (s,L)\in \mathcal{K}, n\in \N\}$, we have
        \begin{itemize}
            \item $[\psi\ll\phi]$ is contained in a $(\mathrm{dim}(\mathcal{K})-1)$--dimensional  manifold.
            \item $[\psi\ggg\phi]$ contains an open subset of $\mathcal{K}$.
        \end{itemize}
    \end{definition}

    \begin{remark}\upshape
        In our context, where $\mathcal{K} = (0,\infty)^2$, we can grasp the essence of Definition~\ref{def:ARC} heuristically as follows: Given an admissible collection $\phi$, the set $[\psi\ll\phi]$ comprises collections $\psi$ exhibiting a superior rate of convergence compared to $\phi$. On the other hand, $[\psi\ggg\phi]$ encapsulates collections that suffer from a worse rate of convergence than all the multiplicative inverse gaps observed in $[\psi\ll\phi]$. Consequently, Definition~\ref{def:ARC} designates $\phi$ as an ARC if $[\psi\ll\phi]$, akin to an optimal frontier, manifests as $1$-dimensional. This implies that for a given $s$, the convergence rate of $\phi_n(s,L)$ cannot be enhanced. Conversely, the condition that $[\psi\ggg\phi]$ contains an open subset of $(0,\infty)^2$ suggests ample space for collections exhibiting sub-optimal convergence rates in comparison to $\phi$. To illustrate visually, envision $s$ along the $x$-axis and $L$ along the $y$-axis. It is shown in Theorem~\ref{thm:adaptivity.simple.domains} that on simple domains, if $\phi$ is an ARC, then the set $[\psi\ll\phi]$ is equal to the vertical line $\{s_0\} \times (0,\infty)$ for an appropriate $s_0 > 0$, and $[\psi\ggg\phi]$ corresponds to the half-space $(s_0,\infty) \times (0,\infty)$.
    \end{remark}

    Let us recall that the ARC is unique up to asymptotic order, i.e., if $\phi$ and $\psi$ are two ARCs then, for any $(s,L)\in \mathcal{K}$, we have
    \begin{equation*}
        \phi_n(s,L) \lesssim \psi_n(s,L)
        \quad \text{and} \quad
        \psi_n(s,L) \lesssim \phi_n(s,L).
    \end{equation*}
    Henceforth, we refer to an ARC as \textit{the} ARC using the above identification.
    Further, note that if the collection of minimax rates $N = \{N_n(s,L) : (s,L)\in \mathcal{K}, n\in \N\}$ is admissible, then it is the ARC.

\section{Statistical procedure}\label{sec:statistical.procedure}

    In this section, we present a novel estimation procedure which is both simple to understand and completely free of any tuning parameters, thus eliminating the need for a sophisticated calibration step. The main idea behind our approach is to use polynomials of varying degrees to approximate the density function $f$ in different neighborhoods of $t$ and then to simultaneously select the optimal neighborhood and degree of approximation in a data-driven manner. Let us formalize this idea.

\subsection{Local polynomial estimators}\label{sec:locpoly}

    Our collection of local polynomial estimators will be indexed by a two-dimensional parameter
    \begin{equation*}
        \gamma = (m,h)\in \Gamma = \N_0\times (0,\rho],
    \end{equation*}
    where $m$ represents the degree of the polynomial and $h$ denotes the bandwidth, the latter of which determines the size of the neighborhoods.
    Let $\1_A$ be the indicator function for any set $A\subseteq \R^d$.
    Recall that $\Delta = [-1,1]^d$, and consider the kernel function
    \begin{equation}\label{eq:K}
        K(u) = \1_{\Delta}(u), \quad u\in \R^d.
    \end{equation}
    Note that the kernel $K$ does not have to be a density function. While other positive kernels could be used, we limit our study to this particular kernel for the sake of simplicity. Let us define the polynomial $p_{\gamma}$ as the solution to the following continuous kernel-weighted least-squares minimization problem:
    \begin{equation*}
        p_{\gamma} = \mathop{\mathrm{arg\,min}}_{p\in \mathcal{P}_{\!m}} \int_{\voisin(h)} \left\{f(t+u) - p(u)\right\}^2 w_h(u) \rd u,
    \end{equation*}
    where
    \begin{equation}\label{eq:w.h}
        w_h(u) = h^{-d} K(h^{-1} u) \1_{\domain}(t + u).
    \end{equation}
    Notice that the support of $w_h$ corresponds to the neighborhood $\voisin(h)$.
    If we consider the weighted space $L^2(w_h)$ endowed with its natural inner product
    \begin{equation*}
        \langle g,\tilde{g} \rangle =\int_{\R^d} g(x) \tilde{g}(x) w_h(x) \rd x,
    \end{equation*}
    then the polynomial $p_{\gamma}$ can be interpreted as the orthogonal projection of the function $f(t+\cdot)$ onto the polynomial space $\mathcal{P}_{\!m}\subseteq L^2(w_h)$.
    A simple combinatorial argument shows that the dimension $D_m$ of $\mathcal{P}_{\!m}$ is the number of $d$-combinations from a set of $m+d$ elements. For $\gamma=(m,h)\in \Gamma$, consider the collection of rescaled monomials
    \begin{equation*}
        \Phi_{\gamma}(u) = \left\{\varphi_{\alpha}\left(\frac{u}{h}\right)\right\}_{|\alpha|\leq m}
    \end{equation*}
    which we organize into a $D_m\times 1$ vector using the total order defined for $\alpha \neq \tilde{\alpha}$ by $\alpha \prec \tilde{\alpha}$ if and only if $|\alpha| < |\tilde{\alpha}|$, or $|\alpha|=|\tilde\alpha|$ and $\alpha_{i^{\star}} < \tilde{\alpha}_{i^{\star}}$ for $i^{\star} = \min(i\in \{1,\ldots,d\} : \alpha_i \neq \tilde{\alpha}_i)$.
    Now that $\Phi_{\gamma}(u)$ is a vector function, define the $D_m\times D_m$ Gram matrix
    \begin{equation}\label{eq:matrix.B.gamma}
        \mathcal{B}_{\gamma} =\int_{\R^d} \Phi_{\gamma}(u) \Phi_{\gamma}^{\top}(u)w_h(u) \rd u.
    \end{equation}
    By virtue of Lemma~\ref{lem:positive.definite}, the Gram matrix $\mathcal{B}_{\gamma}$ is symmetric positive definite, so its smallest eigenvalue
    \begin{equation}\label{eq:lambda.gamma}
        \lambda_{\gamma} = \min_{v^{\top} v = 1} v^{\top} \mathcal{B}_{\gamma} v
    \end{equation}
    must be positive. Letting $\mathcal{B}_{\gamma}^{1/2}$ be the lower triangular matrix in the Cholesky decomposition of $\mathcal{B}_{\gamma}$ leads to the following orthonormal basis for the $\smash{L^2(w_h)}$ space:
    \begin{equation}\label{eq:orthogonalization}
        H_{\gamma}(u) = \mathcal{B}_{\gamma}^{-1/2} \Phi_{\gamma}(u),
        \quad u\in \R^d.
    \end{equation}
    If $a_{\gamma}$ denotes the $D_m\times 1$ coordinate vector of $p_{\gamma}$ in this orthonormal basis, then
    \begin{equation*}
        a_{\gamma}
        = \langle f(t+\cdot), H_{\gamma} \rangle
        = \EE \left\{H_{\gamma}(X-t) w_h(X-t)\right\}
        \approx \frac{1}{n}\sum_{i=1}^n H_{\gamma}(X_i-t) w_h(X_i-t)
        \eqqcolon \hat{a}_{\gamma}.
    \end{equation*}
    This leads to $\hat{p}_{\gamma}(u) = H_{\gamma}^{\top}(u) \hat{a}_{\gamma}$ being an estimator for $p_{\gamma}$.
    Therefore, for any $\gamma = (m,h)\in \Gamma$, we define our local polynomial estimator as follows:
    \begin{equation}\label{eq:hat.f.gamma}
        \hat{f}_{\gamma}(t) = \hat{p}_{\gamma}(0) = \frac{1}{n} \sum_{i=1}^n H_{\gamma}^{\top}(0) H_{\gamma}(X_i-t) w_h(X_i-t).
    \end{equation}

\subsection{Selection procedure}\label{sec:selection-procedure}

    The data-driven procedure selects a local polynomial estimator from a collection $\{\hat{f}_{\gamma}(t): \gamma\in \Gamma_{\!n}\}$, where $\Gamma_{\!n}$ is a discrete one-parameter subset of the pairs of polynomial degrees and bandwidths in $\Gamma$, see~\eqref{eq:Gamma.n} below.
    For any $\ell\in \N$, set
    \begin{equation}\label{eq:h.in.Gamma.n}
        h_{\ell} = \rho\exp(-\ell),
    \end{equation}
    and let $\{m_{\ell} : \ell\in \N\}\subseteq \{0,1,\ldots,\lfloor\log n\rfloor\}$ be a nonincreasing sequence of integers. These choices put approximately $\log n$ bandwidth levels on a logarithmic scale down to a smallest neighborhood of radius at least $\rho n^{-1}$ asymptotically. Consider the set of indexed pairs
    \begin{equation}\label{eq:Gamma.n}
        \Gamma_{\!n} = \{(m_{\ell}, h_{\ell}) : \ell\in \mathcal{L}_n\}\subseteq \Gamma,
    \end{equation}
    where
    \begin{equation}\label{eq:L.n.W.h}
        \mathcal{L}_n = \big\{1 \leq \ell \leq \lfloor\log n\rfloor : n h^d_{\ell} W_{h_{\ell}}\geq (\log n)^3\big\}
        \quad \text{with} \quad
        W_h =\int_{\R^d} w_h(u) \rd u.
    \end{equation}
    To provide clarity for the selection process, we introduce certain observables.
    Specifically, for each $\gamma=(m,h)\in \Gamma_{\!n}$, the quantity
    \begin{equation}\label{eq:hat.v.gamma}
        \hat{v}_{\gamma} = \frac{1}{n} \times \frac{1}{n} \sum_{i=1}^n \left\{H_{\gamma}^{\top}(0) H_{\gamma}(X_i-t) w_h(X_i-t)\right\}^2
    \end{equation}
    is an estimator of a natural upper bound on the variance of $\hat{f}_{\gamma}(t)$.
    Also, for some fixed constant $\delta > 1$, and the pairs $\gamma=(m,h)\in \Gamma_{\!n}$ and $(v,x)\in [0,\infty)^2$, define
    \begin{equation}\label{eq:c.gamma}
        c_{\gamma} = \frac{\sqrt{D_m}}{n h^d \lambda_{\gamma}},
        \qquad
        \varepsilon_{\gamma} = \frac{(\delta-1) D_m W_h}{n h^d \lambda_{\gamma}^2},
        \qquad
        \Lambda_{\gamma} = 2 \, \lvert\log(\lambda_{\gamma})\rvert,
    \end{equation}
    and
    \begin{equation}\label{eq:r.gamma.and.pen.gamma}
        r_{\gamma}(v,x) = \sqrt{2vx} + c_{\gamma} x,
        \qquad
        \mathrm{pen}(\gamma) = d\delta\lvert\log h\rvert + \Lambda_{\gamma}.
    \end{equation}
    Equipped with these notations, consider an upper bound estimator of the bias of $\hat{f}_{\gamma}(t)$, i.e.,
    \begin{equation}\label{eq:hat.A.gamma}
        \hat{A}_{\gamma} = \max_{\gamma'\in \Gamma_{\!n}} \left\{\left|\hat{f}_{\gamma\vee\gamma'}(t) - \hat{f}_{\gamma'}(t)\right| - \hat{\mathbb{U}}_{\gamma\vee\gamma'} - \hat{\mathbb{U}}_{\gamma'}\right\}_+,
    \end{equation}
    where $(\cdot)_+ = \max\{\cdot \, ,0\}$ and $\gamma\vee\gamma'$ denotes the maximum of $\gamma=(m_{\ell},h_{\ell})$ and $\gamma'=(m_{\ell'},h_{\ell'})$ with respect to the total order $\preceq$ defined on $\Gamma_{\!n}$ by $\gamma\preceq\gamma'$ if $h_{\ell} \leq h_{\ell'}$ (i.e., if $\ell\geq \ell'$).
    Moreover, consider a penalized version of the estimator of an upper bound on the standard deviation estimator of $\hat{f}_{\gamma}(t)$,
    \begin{equation}\label{eq:hat.U.gamma}
        \hat{\mathbb{U}}_{\gamma} = r_{\gamma}\left\{\hat{v}_{\gamma} +\varepsilon_{\gamma}, \mathrm{pen}(\gamma)\right\}.
    \end{equation}
    The final adaptive estimator is then defined by
    \begin{equation}\label{eq:selection.rule}
        \hat{f}(t) = \hat{f}_{\hat{\gamma}}(t),
        \quad \text{where} \quad
        \hat{\gamma} = \mathop{\mathrm{arg\,min}}_{\gamma\in \Gamma_{\!n}} \big(\hat{A}_{\gamma} + \hat{\mathbb{U}}_{\gamma}\big).
    \end{equation}

    \begin{remark}\upshape
        Before we state our main results in Section~\ref{sec:main.results}, it is worthwhile to first discuss a few points that are particularly relevant:
        \begin{enumerate}
            \item The selection rule~\eqref{eq:selection.rule} is inspired by the so-called Goldenshluger-Lepski method introduced in a series of papers, see \cite{Lepski1991adaptive} and \cite{GoldenshlugerLepski2008,GoldenshlugerLepski2009,GoldenshlugerLepski2011,GoldenshlugerLepski2014}. It consists of a trade-off between the bias upper bound estimators $\hat{A}_{\gamma}$ and the penalized standard deviation upper bound estimators $\hat{\mathbb{U}}_{\gamma}$. Finding tight upper variables $\hat{\mathbb{U}}_{\gamma}$ is the key point of this selection procedure. Ideally, we would like to choose
                \begin{equation}\label{eq:U.gamma}
                    \mathbb{U}_{\gamma} = r_{\gamma}\{v_{\gamma},\mathrm{pen}(\gamma)\},
                \end{equation}
                where the counterpart of $\hat{v}_{\gamma}$ in~\eqref{eq:hat.v.gamma}, namely
                \begin{equation}\label{eq:v.gamma}
                    v_{\gamma} = \frac{1}{n} \times \EE\left[\left\{H_{\gamma}^{\top}(0)H_{\gamma}(X-t) w_h(X-t)\right\}^2\right],
                \end{equation}
                is a natural upper bound on the true variance of $\hat{f}_{\gamma}(t)$.
            \item The form of $\mathbb{U}_{\gamma}$ can be easily explained. The function $r_{\gamma}$ in~\eqref{eq:r.gamma.and.pen.gamma} is used to apply a Bernstein inequality to control the bias term $\hat{A}_{\gamma}$, see Lemma~\ref{lem:Bernstein.1} and its proof for details. The penalty $\mathrm{pen}(\gamma)$ consists of two terms: the $\lvert\log h\rvert$-term is unavoidable in pointwise adaptive estimation (see \cite{Lepski1991adaptive}) while the quantity $\Lambda_{\gamma}$, which adapts to the geometry of the domain $\domain$, is specific to our framework. To define $\hat{\mathbb{U}}_{\gamma}$, the variance bound $v_{\gamma}$ is replaced by its estimator $\hat{v}_{\gamma}$ and the correction term $\varepsilon_{\gamma}$ is added to ensure that, with large probability, $\hat{\mathbb{U}}_{\gamma}$ becomes larger than $\mathbb{U}_{\gamma}$ while staying close to it.
            \item The Goldenshluger-Lepski method is generally defined using an order on $\Gamma_{\!n}$ which is, roughly speaking, induced by the variance of the estimators $\hat{f}_{\gamma}(t)$. This explains the definition of the total order $\gamma\preceq\gamma'$ stated just below \eqref{eq:hat.A.gamma}, which involves only the bandwidth $h$ in the comparison of the parameter pairs $(m,h)$ in $\Gamma_{\!n}$.
        \end{enumerate}
    \end{remark}

\section{Main results}\label{sec:main.results}

    Our main results are presented in three steps. First, in Section~\ref{sec:general.results}, we obtain bounds on the bias and variance of the base local polynomial estimators $\hat{f}_{\gamma}(t)$ defined in \eqref{eq:hat.f.gamma}, and we formulate an oracle-type inequality satisfied by the adaptive estimator $\hat{f}(t)$ defined in \eqref{eq:selection.rule}. Next, in Section~\ref{sec:simple.domain}, we focus on the most common case, where the geometry of $\domain$ is `simple' in a neighborhood of $t$, meaning that Assumption~\ref{ass:1} is satisfied. In this situation, the minimax and adaptive rates of convergence are established over the whole collection of H\"older-type functional classes $\{\Sigma(s,L) : (s,L)\in (0,\infty)^2\}$. In Section~\ref{sec:complicated.domain}, we consider a specific situation where the domain $\domain$ has a more complicated geometry. To aid understanding, the results are specifically stated for polynomial sectors. We obtain upper bounds on the rate of convergence in each class $\Sigma(s,L)$, which depend on the geometry of $\domain$, and we prove that, for small regularities $s$, these rates of convergence are minimax. Analogous adaptivity results are also derived. Finally, in Section~\ref{sec:unknown.domains}, the case of unknown compact convex domains is briefly discussed.

\subsection{General results}\label{sec:general.results}

    Our first two results pertain to the study of the bias and variance of each estimator in the collection $\{\hat{f}_{\gamma}(t) : \gamma\in \Gamma\}$. The proofs of these propositions are postponed to Section~\ref{sec:proofs}.

    \begin{proposition}[Bound on the bias]\label{prop:control.bias}
        Let $\gamma=(m,h)\in \Gamma$ be given.
        Assume that the density function $f:\domain\to\R$ belongs to the class $\Sigma(s,L)$ for some positive $s$ and $L$. Then, there exists a positive real constant $\mathfrak{L}_{m,s,L}$  that depends on $m$, $s$ and $L$ such that
        \begin{equation}\label{eq:bias.bound}
            \left|\EE\left\{\hat{f}_{\gamma}(t)\right\} - f(t)\right|
            \leq \frac{W_h\sqrt{D_m}}{\lambda_{\gamma}} \times \mathfrak{L}_{m,s,L} \, h^{\beta_m(s)},
        \end{equation}
        where $\beta_m(s) = \min(m+1, s)$.
        Moreover, we have $\mathfrak{L}_{m,s,L} = L$ as soon as $m\geq\llfloor s \rrfloor$.
    \end{proposition}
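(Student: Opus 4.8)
The plan is to expand the estimator's expectation, exploit the exact reproduction of polynomials by the projection, and then bound the residual using the Hölder approximation property.

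\medskip

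\textbf{Step 1: Express the bias as a projection error.} First I would note that by the definition of $\hat{f}_{\gamma}(t)$ in~\eqref{eq:hat.f.gamma} and the fact that $a_\gamma = \EE\{H_\gamma(X-t)w_h(X-t)\}$, we have $\EE\{\hat{f}_\gamma(t)\} = H_\gamma^\top(0) a_\gamma = p_\gamma(0)$. Hence the bias is exactly $p_\gamma(0) - f(t) = p_\gamma(0) - f(t+0)$, where $p_\gamma$ is the $L^2(w_h)$-orthogonal projection of $f(t+\cdot)$ onto $\mathcal{P}_{\!m}$. So the whole problem reduces to controlling the pointwise error at $u=0$ of this weighted polynomial projection.

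\medskip

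\textbf{Step 2: Subtract the Hölder polynomial and use reproduction.} Let $q\in\mathcal{P}_{\llfloor s\rrfloor}$ be the approximating polynomial from Definition~\ref{def:Sigma.s.L}, so $q(0)=f(t)$ and $|f(t+u)-q(u)|\le L\|u\|_\infty^s$ on $\voisin(\rho)\supseteq\voisin(h)$. I would write $g = f(t+\cdot) - q$ and split according to whether $q$ already lies in $\mathcal{P}_{\!m}$. If $m\ge\llfloor s\rrfloor$, then $q\in\mathcal{P}_{\!m}$, so by linearity of the projection $p_\gamma = \Pi_\gamma q + \Pi_\gamma g = q + \Pi_\gamma g$ where $\Pi_\gamma$ is the projection; thus $p_\gamma(0)-f(t) = (\Pi_\gamma g)(0)$ and it remains to bound $|(\Pi_\gamma g)(0)|$ by $\|g\|$ in an appropriate norm. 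Writing $\Pi_\gamma g = H_\gamma^\top \langle g, H_\gamma\rangle$, we get $|(\Pi_\gamma g)(0)| = |H_\gamma^\top(0)\langle g,H_\gamma\rangle| \le \|H_\gamma(0)\|\cdot\|\langle g,H_\gamma\rangle\|$. The second factor is at most $\|g\|_{L^2(w_h)}$, which is bounded by $(\int_{\voisin(h)} L^2\|u\|_\infty^{2s} w_h(u)\,\mathrm{d}u)^{1/2} \le L h^s \sqrt{W_h}$ using $\|u\|_\infty\le h$ on the support of $w_h$. For the first factor, since $H_\gamma(0) = \mathcal{B}_\gamma^{-1/2}\Phi_\gamma(0)$ and $\Phi_\gamma(0)$ is the vector whose only nonzero entry is the $\varphi_0$-coordinate equal to $1$, we get $\|H_\gamma(0)\|^2 = \Phi_\gamma^\top(0)\mathcal{B}_\gamma^{-1}\Phi_\gamma(0) \le \|\mathcal{B}_\gamma^{-1}\| = 1/\lambda_\gamma$. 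Combining, $|(\Pi_\gamma g)(0)| \le \lambda_\gamma^{-1/2} L h^s \sqrt{W_h}$. This already gives~\eqref{eq:bias.bound} with exponent $s = \beta_m(s)$ (since $m\ge\llfloor s\rrfloor$ forces $m+1>s$, hence $\beta_m(s)=s$) and with the constant $L$ — in fact it gives an even better bound than claimed, and a crude further estimate $\lambda_\gamma^{-1/2}\le W_h^{1/2}\sqrt{D_m}/\lambda_\gamma$ (or just $\sqrt{D_m}\,W_h/\lambda_\gamma \ge \lambda_\gamma^{-1/2}$ after checking $W_h\sqrt{D_m}\ge\lambda_\gamma^{1/2}$, which holds since $\lambda_\gamma\le\|\mathcal{B}_\gamma\|\le D_m W_h$) brings it into the exact stated form with $\mathfrak{L}_{m,s,L}=L$.

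\medskip

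\textbf{Step 3: The case $m<\llfloor s\rrfloor$.} Here $q\notin\mathcal{P}_{\!m}$ in general, so I would instead truncate: replace $s$ by $s' = m+1 \le s$ and replace $q$ by its truncation $q_{\le m}\in\mathcal{P}_{\!m}$ consisting of the monomials of degree $\le m$. Since $q$ is a \emph{fixed} polynomial of degree $\le\llfloor s\rrfloor$, the discarded part $q - q_{\le m}$ is a polynomial all of whose monomials have degree between $m+1$ and $\llfloor s\rrfloor$, so on $\|u\|_\infty\le h\le\rho\le e^{-1}$ it is bounded by $C_{m,s,L}\,h^{m+1} = C_{m,s,L}\,h^{\beta_m(s)}$ for a constant depending only on the coefficients of $q$ (which depend on $m$, $s$, $L$) — this uses $h\le 1$ so that $h^{m+1}$ dominates $h^{k}$ for $k\ge m+1$. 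Then $f(t+u) - q_{\le m}(u) = [f(t+u)-q(u)] + [q(u)-q_{\le m}(u)]$, so $\|f(t+\cdot)-q_{\le m}\|_{L^2(w_h)} \le (L + C_{m,s,L}) h^{\beta_m(s)}\sqrt{W_h}$, and now repeating Step 2 with $q_{\le m}\in\mathcal{P}_{\!m}$ in place of $q$ yields the bound with $\mathfrak{L}_{m,s,L} = L + C_{m,s,L}$.

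\medskip

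\textbf{Main obstacle.} The conceptually delicate point is Step 3: making precise that the coefficients of the Hölder-approximating polynomial $q$ are controlled — a priori Definition~\ref{def:Sigma.s.L} only asserts existence of \emph{some} $q$ with the stated approximation property, and one must argue that its coefficients are bounded in terms of $s$, $L$ (and the fixed geometry $\rho$, $\voisin$) in order to get a constant $C_{m,s,L}$ depending only on the allowed parameters. This can be done by a compactness/equivalence-of-norms argument on the finite-dimensional space $\mathcal{P}_{\llfloor s\rrfloor}$: evaluating~\eqref{eq:approx-holder} at $u=0$ and at $\llfloor s\rrfloor+\cdots$ suitably many points of $\voisin(\rho)$ (which contains a neighborhood of the origin, hence enough points in general position) pins down the coefficients of $q$ up to $L\rho^s$ and $\sup_{\voisin(\rho)}|f(t+\cdot)| \le f(t) + L\rho^s$-type quantities; since on $\Sigma(s,L)$ one has $q(0)=f(t)$ but $f(t)$ itself is not bounded, one keeps the dependence on $f(t)$ implicit by noting only the \emph{difference} $q - q_{\le m}$ matters and its degree-$0$ part is zero, so only the higher coefficients enter, and those are bounded by $L$ and $\rho$ alone. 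Everything else — the expectation computation in Step 1, the norm inequalities with the Gram matrix, the bound $\|u\|_\infty\le h$ on $\operatorname{supp} w_h$ — is routine linear algebra and elementary estimation.
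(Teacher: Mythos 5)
Your proposal is correct and follows essentially the same route as the paper's proof: truncate the H\"older polynomial $q$ at degree $m$, use exact reproduction of $\mathcal{P}_{\!m}$ by the weighted projection, and bound the residual (of order $h^{\beta_m(s)}$) via the smallest eigenvalue of the Gram matrix. The only differences are cosmetic — your Bessel-inequality step gives the slightly sharper intermediate bound $\mathfrak{L}_{m,s,L}\,h^{\beta_m(s)}\sqrt{W_h/\lambda_{\gamma}}$ before relaxing it via $\lambda_{\gamma}\leq D_m W_h$, whereas the paper bounds $\int \Phi_{\gamma} w_h \widetilde{R}$ directly; and the coefficient-control issue you flag is not needed for the fixed-$f$ statement and is handled for the uniform results by the argument in Lemma~\ref{lem:infty.norm.f}.
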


    \begin{proposition}[Bound on the variance]\label{prop:control.stochastic.term}
        Let $\gamma=(m,h)\in \Gamma$ be given.
        Assume that the density function $f:\domain\to\R$ satisfies
        \begin{equation}\label{eq:f.bounded}
            \finfty \coloneqq \sup_{u\in \voisin(\rho)} \lvert f(t+u) \rvert <\infty.
        \end{equation}
        Also, recall the definition of $v_{\gamma}$ from \eqref{eq:v.gamma}. Then, we have
        \begin{equation}\label{eq:variance.bound}
            \Var\left\{\hat{f}_{\gamma}(t)\right\}
            \leq v_{\gamma}
            \leq v_{\gamma}^{\star},
            \quad \text{with} \quad
            v_{\gamma}^{\star} = \left(\frac{W_h\sqrt{D_m}}{\lambda_{\gamma}}\right)^2 \times \frac{\finfty}{n h^d W_h}.
        \end{equation}
    \end{proposition}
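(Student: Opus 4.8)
The plan is to express $\hat{f}_{\gamma}(t)$ as a normalized sum of i.i.d.\ random variables, apply the elementary bound $\Var(\bar{Y}_n) \le n^{-1}\EE[Y_1^2]$, and then control the second moment $\EE[Y_1^2]$ by exploiting the orthonormality of the basis $H_{\gamma}$ in $L^2(w_h)$ together with a crude sup-norm bound on $f$. Concretely, writing $Y_i = H_{\gamma}^{\top}(0) H_{\gamma}(X_i - t)\, w_h(X_i - t)$, the estimator in~\eqref{eq:hat.f.gamma} is $\hat{f}_{\gamma}(t) = n^{-1}\sum_{i=1}^n Y_i$, so the $Y_i$ being i.i.d.\ gives $\Var\{\hat{f}_{\gamma}(t)\} \le n^{-1}\EE[Y_1^2] = v_{\gamma}$ directly from the definition~\eqref{eq:v.gamma}. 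This establishes the first inequality in~\eqref{eq:variance.bound} essentially for free.

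For the second inequality $v_{\gamma} \le v_{\gamma}^{\star}$, I would first pull out the sup-norm of $f$: since $w_h$ is supported on $\voisin(h) \subseteq \voisin(\rho)$, one has
\begin{equation*}
    \EE\big[\big\{H_{\gamma}^{\top}(0) H_{\gamma}(X-t) w_h(X-t)\big\}^2\big]
    = \int_{\R^d} \big\{H_{\gamma}^{\top}(0) H_{\gamma}(u)\big\}^2 w_h^2(u) f(t+u)\, \rd u.
\end{equation*}
Bounding $f(t+u) \le \finfty$ and using $w_h(u) \le h^{-d}\1_{\domain}(t+u) = h^{-d} w_h(u) h^d$ — more precisely $w_h^2(u) \le h^{-d} w_h(u)$ since $K = \1_{\Delta}$ is $\{0,1\}$-valued and $w_h(u) = h^{-d}K(h^{-1}u)\1_{\domain}(t+u)$ — reduces the integral to $h^{-d}\finfty \int \{H_{\gamma}^{\top}(0) H_{\gamma}(u)\}^2 w_h(u)\, \rd u$. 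Now I expand the square as $H_{\gamma}^{\top}(0) H_{\gamma}(u) H_{\gamma}^{\top}(u) H_{\gamma}(0)$ and integrate against $w_h$; by the orthonormality relation $\int H_{\gamma}(u) H_{\gamma}^{\top}(u) w_h(u)\, \rd u = I_{D_m}$, which follows from~\eqref{eq:orthogonalization} and~\eqref{eq:matrix.B.gamma}, this collapses to $\|H_{\gamma}(0)\|_2^2 \cdot W_h$, wait — more carefully, $\int \{H_{\gamma}^{\top}(0) H_{\gamma}(u)\}^2 w_h(u)\,\rd u = H_{\gamma}^{\top}(0)\big(\int H_{\gamma}(u)H_{\gamma}^{\top}(u) w_h(u)\,\rd u\big) H_{\gamma}(0) = \|H_{\gamma}(0)\|_2^2$. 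So $v_{\gamma} \le n^{-1} h^{-d} \finfty \|H_{\gamma}(0)\|_2^2$, and it remains to bound $\|H_{\gamma}(0)\|_2^2 = \Phi_{\gamma}^{\top}(0)\mathcal{B}_{\gamma}^{-1}\Phi_{\gamma}(0) \le \lambda_{\gamma}^{-1}\|\Phi_{\gamma}(0)\|_2^2$. Since $\Phi_{\gamma}(0)$ has entries $\varphi_{\alpha}(0)$, all of which vanish except $\varphi_0 \equiv 1$, we get $\|\Phi_{\gamma}(0)\|_2^2 = 1$, hence $\|H_{\gamma}(0)\|_2^2 \le \lambda_{\gamma}^{-1}$, giving $v_{\gamma} \le \finfty/(n h^d \lambda_{\gamma})$. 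To match the stated form $v_{\gamma}^{\star} = (W_h\sqrt{D_m}/\lambda_{\gamma})^2 \cdot \finfty/(n h^d W_h)$, note this equals $W_h D_m \finfty/(n h^d \lambda_{\gamma}^2)$, which is $\ge \finfty/(n h^d \lambda_{\gamma})$ whenever $W_h D_m \ge \lambda_{\gamma}$; this last inequality holds because $\lambda_{\gamma}$ is the smallest eigenvalue of $\mathcal{B}_{\gamma}$ while $\operatorname{tr}(\mathcal{B}_{\gamma}) = \int \|\Phi_{\gamma}(u)\|_2^2 w_h(u)\,\rd u \le D_m W_h$ (each $|\varphi_{\alpha}(u/h)| \le 1$ on $\voisin(h)$), so $\lambda_{\gamma} \le \operatorname{tr}(\mathcal{B}_{\gamma})/D_m \le W_h$, and a fortiori $\lambda_{\gamma} \le W_h \le W_h D_m$.

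The main obstacle is purely bookkeeping: making sure the crude $v_{\gamma} \le \finfty/(n h^d\lambda_{\gamma})$ bound is correctly inflated to the nominal $v_{\gamma}^{\star}$, i.e.\ verifying $\lambda_{\gamma} \le W_h D_m$ via the trace argument above. Everything else is immediate from the i.i.d.\ structure and the orthonormality of $H_{\gamma}$; no concentration inequality is needed for this proposition (unlike the bias-control estimator $\hat{A}_{\gamma}$, which requires Bernstein-type arguments).
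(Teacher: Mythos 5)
Your proposal is correct, and for the key second inequality $v_{\gamma}\leq v_{\gamma}^{\star}$ it takes a genuinely different route from the paper. The paper bounds the integrand pointwise: using the spectral-norm estimate $\{H_{\gamma}^{\top}(0)H_{\gamma}(u)\}^2\leq D_m/\lambda_{\gamma}^2$ for all $u\in\voisin(h)$ (its inequality~\eqref{eq:H.H.2}, which is also recycled in the Bernstein arguments), it then integrates $w_h$ to pick up the factor $W_h$ and lands exactly on $v_{\gamma}^{\star}$. You instead evaluate the integral exactly via the orthonormality $\int H_{\gamma}(u)H_{\gamma}^{\top}(u)w_h(u)\,\rd u=I_{D_m}$, obtaining $\int\{H_{\gamma}^{\top}(0)H_{\gamma}(u)\}^2 w_h(u)\,\rd u=\|H_{\gamma}(0)\|_2^2\leq\lambda_{\gamma}^{-1}$ (since $\|\Phi_{\gamma}(0)\|_2=1$), which yields the sharper bound $v_{\gamma}\leq\finfty/(nh^d\lambda_{\gamma})$; you then correctly inflate it to the nominal $v_{\gamma}^{\star}$ via the trace argument $\lambda_{\gamma}\leq\operatorname{tr}(\mathcal{B}_{\gamma})/D_m\leq W_h\leq W_h D_m$. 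What your approach buys is a strictly better intermediate variance bound (the ratio of your bound to $v_{\gamma}^{\star}$ is $\lambda_{\gamma}/(W_hD_m)\leq 1$); what it costs is the extra eigenvalue-versus-trace step, which the paper's cruder pointwise bound avoids, and which matters to the paper because the same pointwise bound~\eqref{eq:H.H.2} is reused verbatim elsewhere (Lemmas on concentration). One cosmetic remark: your display $w_h(u)\leq h^{-d}\1_{\domain}(t+u)=h^{-d}w_h(u)h^d$ is garbled as written, but the relation you actually use, $w_h^2(u)\leq h^{-d}w_h(u)$ (in fact an equality since $K=\1_{\Delta}$ is $\{0,1\}$-valued), is the correct one and is exactly what the paper invokes.
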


    Let us highlight that both the bias bound~\eqref{eq:bias.bound} and the variance bound~\eqref{eq:variance.bound} can be broken down into two separate factors. The second factors, which are proportional to $h^{\beta_m(s)}$ and $1 / (n h^d W_h)$, can be readily explained as they correspond (up to a constant) to the bias and variance bounds of multivariate histograms with adaptive bin shapes. These histograms are defined as $\hat{p}_{\mathcal{B}} = \sum_{i=1}^n \1_{\mathcal{B}}(X_i)/\{n\mathrm{Leb}(\mathcal{B})\}$ on each multivariate bin $\mathcal{B}$ belonging to a partition of $\domain$, as documented for example by \cite{Klemala2009}. Specifically, when considering the bin $\mathcal{B} = \domain\cap(t+h\Delta)$, it is straightforward to show that the bias and the variance are proportional to $h^{\beta_m(s)}$ and $1 / (n h^d W_h)$, respectively, just as we would with our estimator when the polynomial degree is set to $m=0$. Notice however that in general the first factors introduce an additional quantity, the ratio $W_h / \lambda_{\gamma}$, which is not usually present and depends on the geometry of the domain $\domain$. In the case of simple domains, which are treated in Section~\ref{sec:simple.domain}, the ratio $W_h / \lambda_{\gamma}$ remains asymptotically bounded, thereby exerting no influence on the bias/variance analysis.

    In the case of complicated domains, which are treated in Section~\ref{sec:complicated.domain}, we know that the ratio $ W_h / \lambda_{\gamma} $ encodes the complexity of the domain in some way. Unfortunately, we do not have a heuristic interpretation that would let us guess its asymptotic behavior in the variable $n$ (or $h$) in general. This remains an open problem. However, in the specific case of the polynomial sectors $\mathcal{D}_k$ covered in Section~\ref{sec:complicated.domain}, we do have a (potential) interpretation based on the calculations in Equations~\eqref{eq:W.h.calculation}, \eqref{eq:v.top.B.v.decomp} and \eqref{eq:minoration.lambda}. Indeed, the quantity $W_h$ is the area of the domain at the scale of the bandwidth parameter, so the specific result $W_h = h^{k-1}/(k+1)$ in \eqref{eq:W.h.calculation} gives a `unit' of the pointiness of the polynomial sectors. Equations~\eqref{eq:v.top.B.v.decomp} and \eqref{eq:minoration.lambda} then show that each degree of a polynomial in our basis can capture one such unit. Since there are two dimensions and the polynomials in our space have degree at most $m$ (recall that $\gamma = (m,h)$), we are left with $W_h / \lambda_{\gamma_k}$ being asymptotic to $\smash{h^{-(k-1)\times 2m}}$, where $\gamma_k \equiv \gamma$ is the notation tailored to the polynomial sector $\domain_k$. One could therefore interpret $\smash{h^{(k-1)\times 2m}}$ as the finest amount of pointiness that our polynomial basis can model. The finer the pointiness of the domain, the bigger the bias and variance become in Propositions~\ref{prop:control.bias}~and~\ref{prop:control.stochastic.term}.

    For the forthcoming minimax and adaptive results in Sections~\ref{sec:simple.domain}~and~\ref{sec:complicated.domain}, it is also worth noting that Lemma~\ref{lem:infty.norm.f} implies that $\finfty$ remains uniformly bounded over $f\in \Sigma(s,L)$ in the variance bound \eqref{eq:variance.bound}.

    The main result of this section is an oracle-type inequality which provides an explicit asymptotic bound for the pointwise $L^2$ risk of our adaptive estimator $\hat{f}(t)$.

    \begin{theorem}[Oracle-type inequality]\label{thm:oracle.result}
        Assume that the density function $f:\domain\to\R$ satisfies~\eqref{eq:f.bounded}. Assume further that the domain $\domain$ satisfies the following property: there exists a positive real constant $b\in (0,\infty)$ such that
        \begin{equation}\label{eq:domain.lambda}
            \min_{\gamma=(m,h)\in \Gamma_{\!n}} (h^m)^{-b} \lambda_{\gamma} \geq 1.
        \end{equation}
        Define, for any $\gamma\in \Gamma_{\!n}$,
        \begin{equation}\label{eq:B.gamma.and.U.star.gamma}
            \mathbb{B}_{\gamma} = \max_{\substack{\gamma'\in \Gamma_{\!n} \\ \gamma' \preceq \gamma}} \left|\EE\left\{\hat{f}_{\gamma'}(t)\right\} - f(t)\right|,
            \quad \text{and} \quad
            \mathbb{U}_{\gamma}^{\star} = r_{\gamma}\{v_{\gamma}^{\star}, \mathrm{pen}(\gamma)\},
        \end{equation}
        where recall $v_{\gamma}^{\star}$ is the bound on the variance in~\eqref{eq:variance.bound}, and the quantities $r_{\gamma}(\cdot,\cdot)$ and $\mathrm{pen}(\cdot)$ are defined in~\eqref{eq:r.gamma.and.pen.gamma}. Recall also that $\delta > 1$ is a parameter that we fixed above~\eqref{eq:r.gamma.and.pen.gamma}. Then, we have
        \begin{equation}\label{eq:thm:oracle.result.to.prove}
            \pwrisk(\hat{f}, f)
            \leq \min_{\gamma\in \Gamma_{\!n}} \left[5 \mathbb{B}_{\gamma} + \left\{3 + 2 \sqrt{(\delta-1) / \finfty}\right\} \mathbb{U}_{\gamma}^{\star}\right] + \mathcal{O}\left\{\sqrt\frac{(\log n)^d}{n}\right\}.
        \end{equation}
        The notation $u_n = \mathcal{O}(v_n)$ here means that $\limsup_{n\to\infty} |u_n / v_n| < \infty$.
    \end{theorem}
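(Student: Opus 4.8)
The plan is to follow the classical Goldenshluger--Lepski scheme, splitting the risk via the oracle index $\gamma$ into a ``bias-type'' part controlled by $\hat{A}_{\gamma}$ and a ``fluctuation'' part controlled by $\hat{\mathbb{U}}_{\gamma}$, and then to replace all data-dependent quantities ($\hat{A}_{\gamma}$, $\hat{\mathbb{U}}_{\gamma}$, $\hat v_{\gamma}$) by their deterministic analogues ($\mathbb{B}_{\gamma}$, $\mathbb{U}_{\gamma}^{\star}$, $v_{\gamma}^{\star}$) on a large-probability event, paying a $\mathcal{O}(\sqrt{(\log n)^d/n})$ price on the complement. First I would record the pointwise decomposition: for any fixed $\gamma\in\Gamma_{\!n}$ and the (random) selected $\hat\gamma$, write
\begin{equation*}
    |\hat f(t)-f(t)| \leq |\hat f_{\hat\gamma}(t)-\hat f_{\hat\gamma\vee\gamma}(t)| + |\hat f_{\hat\gamma\vee\gamma}(t)-\hat f_{\gamma}(t)| + |\hat f_{\gamma}(t)-f(t)|.
\end{equation*}
The first two terms are each bounded, by definition of $\hat A_{\cdot}$ in \eqref{eq:hat.A.gamma}, by $\hat A_{\hat\gamma}+\hat{\mathbb{U}}_{\hat\gamma\vee\gamma}+\hat{\mathbb{U}}_{\gamma}$ and $\hat A_{\gamma}+\hat{\mathbb{U}}_{\hat\gamma\vee\gamma}+\hat{\mathbb{U}}_{\gamma}$ respectively; then the minimizing property \eqref{eq:selection.rule} of $\hat\gamma$ gives $\hat A_{\hat\gamma}+\hat{\mathbb{U}}_{\hat\gamma}\leq \hat A_{\gamma}+\hat{\mathbb{U}}_{\gamma}$, and since $\gamma\preceq\hat\gamma\vee\gamma$ or $\hat\gamma\preceq\hat\gamma\vee\gamma$ one controls $\hat{\mathbb{U}}_{\hat\gamma\vee\gamma}$ by $\hat{\mathbb{U}}_{\hat\gamma}$ or $\hat{\mathbb{U}}_{\gamma}$ up to the monotonicity of $\gamma\mapsto\hat{\mathbb{U}}_{\gamma}$ along $\preceq$ (this is exactly why the order on $\Gamma_{\!n}$ was chosen to depend only on $h$). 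The upshot is a bound of the form $|\hat f(t)-f(t)|\leq \text{const}\cdot(\hat A_{\gamma}+\hat{\mathbb{U}}_{\gamma})$ for every $\gamma\in\Gamma_{\!n}$.

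Next I would pass from the random to the deterministic quantities. Define the good event $\Omega_n$ on which: (i) $\hat v_{\gamma}+\varepsilon_{\gamma}\geq v_{\gamma}$ for all $\gamma\in\Gamma_{\!n}$ (so $\hat{\mathbb{U}}_{\gamma}\geq \mathbb{U}_{\gamma}$, using monotonicity of $r_{\gamma}$ in its first argument) and also $\hat v_{\gamma}+\varepsilon_{\gamma}\lesssim v_{\gamma}^{\star}$; (ii) for all $\gamma,\gamma'\in\Gamma_{\!n}$, the centered deviation $|(\hat f_{\gamma\vee\gamma'}(t)-\EE\hat f_{\gamma\vee\gamma'}(t)) - (\hat f_{\gamma'}(t)-\EE\hat f_{\gamma'}(t))|$ is dominated by $\mathbb{U}_{\gamma\vee\gamma'}+\mathbb{U}_{\gamma'}$. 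Item (ii) is the heart: $\hat f_{\gamma}(t)$ is an average of i.i.d.\ bounded terms $H_{\gamma}^{\top}(0)H_{\gamma}(X_i-t)w_h(X_i-t)$ whose second moment is $n v_{\gamma}$, and whose sup-norm is controlled by $c_{\gamma}$ (via the bound $|H_{\gamma}^{\top}(0)H_{\gamma}(u)w_h(u)|\lesssim D_m/(h^d\lambda_{\gamma})$ coming from the Cholesky normalization), so a Bernstein inequality — precisely Lemma~\ref{lem:Bernstein.1} invoked in the remark — yields that the deviation exceeds $r_{\gamma}(v_{\gamma},x)$ with probability $\leq 2e^{-x}$; taking $x=\mathrm{pen}(\gamma)$ makes the exceedance probability summable over $\Gamma_{\!n}$ precisely because $\mathrm{pen}(\gamma)=d\delta|\log h|+\Lambda_{\gamma}$ dominates $\log|\Gamma_{\!n}|$ plus the $\Lambda_{\gamma}=2|\log\lambda_{\gamma}|$ correction (and here \eqref{eq:domain.lambda} is used to ensure $\Lambda_{\gamma}$ does not blow up faster than $|\log h|$, so the union bound over the $\lesssim\log n$ indices still gives $\PP(\Omega_n^c)=\mathcal{O}(n^{-c})$ for some $c>1$, whence the residual contributes at the $\sqrt{(\log n)^d/n}$ scale after accounting for the trivial deterministic bound on $|\hat f(t)|$ off $\Omega_n$). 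Similarly item (i) follows from a Bernstein/Bennett argument applied to the squared summands defining $\hat v_{\gamma}$, with $\varepsilon_{\gamma}$ calibrated exactly so that the required one-sided deviation has probability $e^{-\mathrm{pen}(\gamma)}$.

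On $\Omega_n$ one then checks the two key comparisons: $\hat A_{\gamma}\leq 2\mathbb{B}_{\gamma}$ (because the empirical increments in \eqref{eq:hat.A.gamma} are, after centering, below $\hat{\mathbb{U}}_{\gamma\vee\gamma'}+\hat{\mathbb{U}}_{\gamma'}$ by item (ii), leaving only the deterministic bias increments $|\EE\hat f_{\gamma\vee\gamma'}(t)-\EE\hat f_{\gamma'}(t)|\leq 2\mathbb{B}_{\gamma}$ by the triangle inequality and $\gamma',\gamma\vee\gamma'\succeq$-comparability), and $\hat{\mathbb{U}}_{\gamma}\leq (1+o(1))\{1+\sqrt{(\delta-1)/\finfty}\}\mathbb{U}_{\gamma}^{\star}$ (from $\hat v_{\gamma}+\varepsilon_{\gamma}\lesssim v_{\gamma}^{\star}$, with the extra $\sqrt{(\delta-1)/\finfty}$ reflecting the $\varepsilon_{\gamma}$ inflation relative to $v_{\gamma}^{\star}$, using $\varepsilon_{\gamma}/v_{\gamma}^{\star}=(\delta-1)/\finfty$). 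Combining these with the first-paragraph inequality, taking expectations, restricting to $\Omega_n$, and finally minimizing the resulting deterministic bound $5\mathbb{B}_{\gamma}+\{3+2\sqrt{(\delta-1)/\finfty}\}\mathbb{U}_{\gamma}^{\star}$ over $\gamma\in\Gamma_{\!n}$ yields \eqref{eq:thm:oracle.result.to.prove}. The main obstacle I anticipate is the bookkeeping in the Bernstein step: getting the constants in $r_{\gamma}$, $c_{\gamma}$, $\varepsilon_{\gamma}$ and $\mathrm{pen}(\gamma)$ to line up so that (a) the union bound over all pairs $(\gamma,\gamma')\in\Gamma_{\!n}^2$ is genuinely summable despite the domain-dependent $\Lambda_{\gamma}$ term, and (b) the inflation from $v_{\gamma}\rightsquigarrow v_{\gamma}^{\star}$ and from $v_{\gamma}\rightsquigarrow \hat v_{\gamma}+\varepsilon_{\gamma}$ produces exactly the advertised constant $3+2\sqrt{(\delta-1)/\finfty}$ rather than something larger — this is where the precise definition of $\varepsilon_{\gamma}$ and the identity $v_{\gamma}^{\star}=(W_h\sqrt{D_m}/\lambda_{\gamma})^2\finfty/(nh^dW_h)$ from Proposition~\ref{prop:control.stochastic.term} must be used carefully, together with the uniform boundedness of $\finfty$ over the relevant $f$ noted after Lemma~\ref{lem:infty.norm.f}.
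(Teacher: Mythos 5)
Your overall architecture is the right one and matches the paper's: the Goldenshluger--Lepski decomposition through $\hat{A}_{\gamma}+\hat{\mathbb{U}}_{\gamma}$, Bernstein concentration for $\hat{f}_{\gamma}(t)$ (Lemma~\ref{lem:Bernstein.1}) and for $\hat{v}_{\gamma}$ (Lemma~\ref{lem:Bernstein.2}), and the replacement $\hat{v}_{\gamma}\rightsquigarrow v_{\gamma}\rightsquigarrow v_{\gamma}^{\star}$ to produce $\mathbb{U}_{\gamma}^{\star}$. (One small bookkeeping slip: your first-paragraph conclusion drops the oracle term $|\hat f_{\gamma}(t)-f(t)|$, which is needed to produce the constants $5$ and $3$ --- it contributes an extra $\mathbb{B}_{\gamma}+\mathbb{U}_{\gamma}^{\star}$ via \eqref{eq:thm:oracle.result.bound.R.t}-type reasoning. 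Also, the bound on $\hat{\mathbb{U}}_{\gamma}$ is obtained in the paper in expectation, via $\EE(\hat v_{\gamma})=v_{\gamma}$ and the triangle inequality for the $L^2$ norm, not on the good event; your on-event version would inflate the constant.)

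The genuine gap is in your item (ii). You place the deviation control $|\hat f_{\gamma'}(t)-\EE\hat f_{\gamma'}(t)|\leq \mathbb{U}_{\gamma'}$ for all $\gamma'$ into an all-or-nothing event $\Omega_n$ and claim $\PP(\Omega_n^c)=\mathcal{O}(n^{-c})$ with $c>1$ by a union bound. This is false: the exceedance probability at level $x=\mathrm{pen}(\gamma')$ is $2e^{-\mathrm{pen}(\gamma')}=2\,h_{\ell'}^{d\delta}\lambda_{\gamma'}^{2}$, which for the coarsest scales (e.g.\ $\ell'=1$, $h_1=\rho e^{-1}$) is a constant independent of $n$. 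So $\PP(\Omega_n^c)$ does not vanish, and multiplying it by the crude deterministic bound $|\hat f(t)|\lesssim n\exp\{2b(\log n)^2\}$ destroys the $\mathcal{O}\{\sqrt{(\log n)^d/n}\}$ remainder. The correct handling --- and the actual point of the calibration of $\mathrm{pen}(\gamma)$ --- is to keep the excess $T=\max_{\gamma'}[\,|\hat f_{\gamma'}(t)-\EE\hat f_{\gamma'}(t)|-\hat{\mathbb{U}}_{\gamma'}]_+$ inside the expectation and integrate its tail: then the non-vanishing probability $e^{-\mathrm{pen}(\gamma')}$ is multiplied by the prefactor $2v_{\gamma'}+c_{\gamma'}^2\lesssim D_{m'}W_{h'}/(n h'^d\lambda_{\gamma'}^2)$, the factor $\Lambda_{\gamma'}=2|\log\lambda_{\gamma'}|$ in the penalty cancels $\lambda_{\gamma'}^{-2}$, and $h'^{\,d\delta}/h'^{\,d}=e^{-d(\delta-1)\ell'}$ is summable, giving $\EE(T^2\1_{\mathcal{A}})\lesssim (\log n)^d/n$. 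Only the variance-estimator event $\mathcal{A}=\cap_{\gamma}\{|\hat v_{\gamma}-v_{\gamma}|\leq\varepsilon_{\gamma}\}$ can be treated as you propose, because Lemma~\ref{lem:Bernstein.2} gives $\PP(\mathcal{A}^c)\lesssim \log n\cdot\exp\{-\kappa_1(\log n)^3\}$, which does crush the crude bound --- and it is there, not in the summability of the union bound, that condition~\eqref{eq:domain.lambda} is used (to get $\max_{\gamma}\lambda_{\gamma}^{-1}\leq\exp\{2b(\log n)^2\}$).
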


    \begin{remark}\upshape
        On domains with simple geometries, which are characterized by Assumption~\ref{ass:1} below, the technical condition~\eqref{eq:domain.lambda} in Theorem~\ref{thm:oracle.result} is automatically satisfied when $m$ is constrained to a finite set of values, a situation often encountered in practical scenarios where the target density has a finite smoothness parameter $s$. This assertion is demonstrated in Lemma~\ref{lem:usual.behavior}; see also the first paragraph in the proof of Theorem~\ref{thm:adaptivity.simple.domains}. However, on domains with complicated geometries, verifying the technical condition~\eqref{eq:domain.lambda} becomes a case-by-case endeavor. For instance, in the polynomial sector example detailed in Section~\ref{sec:complicated.domain}, explicit verification is provided within the proof of Theorem~\ref{thm:adaptivity.polynomial.sectors} to establish the adaptivity of our estimator.
    \end{remark}

\subsection{Domains with simple geometries}\label{sec:simple.domain}

    Below, we introduce an assumption that qualifies what it means for a domain $\domain$ to be geometrically `simple' in a neighborhood of $t$.
    \begin{assumption}\label{ass:1}
        There exists an open subset $\Delta_0\subseteq \Delta \equiv [-1,1]^d$ such that
        \begin{equation*}
            h \Delta_0\subseteq \voisin(h), \quad \text{for all } h \in (0,\rho].
        \end{equation*}
    \end{assumption}
    This assumption is satisfied for example if $\voisin(h)$ is star-shaped with respect to the origin and has a non-empty interior. Assumption~\ref{ass:1} is a relaxation of Assumption~5 in Section~2.2 of \cite{Bertin_et_al2020}, which stipulates that there exists a finite set of linear transformations of determinant $1$ such that for a hypercube $[0,r]^d$ of a small enough width $r$, you can, for any point $t\in \domain$, choose a linear transformation in the finite set so that the linearly transformed hypercube (i.e., a rotated parallelotope) fits into $\domain - t$. Assumption~\ref{ass:1} above is weaker because we could simply take $\Delta_0$ to be a small enough open cone contained in the rotated parallelotope that fits into $\domain - t$.

    Under Assumption~\ref{ass:1}, Proposition~\ref{prop:minimaxity.simple.domains} below establishes that our local polynomial estimator $\hat{f}_{\gamma}(t)$ achieves the minimax rate of convergence over each individual class $\Sigma(s,L)$. Similarly, Theorem~\ref{thm:adaptivity.simple.domains} proves that our adaptive statistical procedure $\hat{f}(t)$ attains the ARC over the whole collection $\{\Sigma(s,L) : (s,L)\in (0,\infty)^2\}$. The ARC here corresponds to the minimax rate of convergence up to a logarithmic factor.

    \begin{proposition}[Minimaxity of $\hat{f}_{\gamma}(t)$ on simple domains]\label{prop:minimaxity.simple.domains}
        Suppose that Assumption~\ref{ass:1} holds.
        Let $(s,L)\in (0,\infty)^2$ be given.
        Let $\gamma = (m,h)\in \Gamma$ be such that
        \begin{equation}\label{eq:def-gamma-minimax-usual}
            m = \llfloor s \rrfloor
            \quad \text{and} \quad
            h = \left[\frac{1}{2s \mathfrak{L}_{\llfloor s \rrfloor,s,L}^2} \times \frac{d \, \mathfrak{F}_{s,L}}{2^d \, n}\right]^{1/(2s+d)},
        \end{equation}
        where $\mathfrak{L}_{m,s,L}$ is a positive real constant that appears in the bias upper bound in Proposition~\ref{prop:control.bias}, and $\mathfrak{F}_{s,L}$ is any upper bound on $\sup_{f\in \Sigma(s,L)} \finfty$ (the existence of which is guaranteed by Lemma~\ref{lem:infty.norm.f}).
        If we set
        \begin{equation}\label{eq:N.n}
            N_n(s,L) =  n^{-s/(2s+d)},
        \end{equation}
        then the following assertions hold:
        \begin{enumerate}
            \item $\{N_n(s,L) : n\in \N\}$ is the minimax rate of convergence over $\Sigma(s,L)$.
            \item The estimator $\hat{f}_{\gamma}(t)$ attains this rate of convergence. More precisely, for $n$ large enough, we have
            \begin{equation*}
                N_n^{-1}(s,L)\sup_{f\in \Sigma(s,L)}
                \pwrisk(\hat{f}_{\gamma}, f)
                \leq
                C(s,L),
            \end{equation*}
            where the right-hand is defined by the positive real constant
            \begin{equation}\label{eq:C.s.L}
                C(s,L) = \frac{2^d \sqrt{D_m}}{\lambda_{\star}(m)} \sqrt{\left(\frac{d}{2s} + 1\right) \left(\frac{2s}{4^s d}\right)^{d/(2s+d)}} \mathfrak{F}_{s,L}^{s/(2s+d)} (\mathfrak{L}_{\llfloor s \rrfloor,s,L})^{d/(2s+d)}
            \end{equation}
            and $\lambda_{\star}(m) > 0$ is a lower bound found in Lemma~\ref{lem:usual.behavior} on the smallest eigenvalue $\lambda_{\gamma}$ of the Gram matrix $\mathcal{B}_{\gamma}$.
        \end{enumerate}
    \end{proposition}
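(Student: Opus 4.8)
The plan is to prove the two assertions separately, using the bias and variance bounds of Propositions~\ref{prop:control.bias}~and~\ref{prop:control.stochastic.term} together with the classical machinery for lower bounds.

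\textbf{Upper bound (assertion 2).} First I would combine the bias--variance decomposition from Remark~\ref{rem:decomposition} with the two propositions. Under Assumption~\ref{ass:1}, the key observation is that the ratio $W_h/\lambda_{\gamma}$ stays bounded: indeed $W_h = \int_{\R^d} w_h(u)\,\rd u \le \mathrm{Leb}(\Delta) = 2^d$ trivially, and Lemma~\ref{lem:usual.behavior} provides a lower bound $\lambda_{\gamma}\ge\lambda_{\star}(m)>0$ that is uniform in $h\in(0,\rho]$ once $m=\llfloor s\rrfloor$ is fixed. Hence, writing $\gamma=(\llfloor s\rrfloor,h)$, Proposition~\ref{prop:control.bias} with $\mathfrak{L}_{\llfloor s\rrfloor,s,L}=L$ (valid since $m=\llfloor s\rrfloor\ge\llfloor s\rrfloor$) gives a squared bias bounded by $(2^d\sqrt{D_m}/\lambda_{\star}(m))^2 L^2 h^{2s}$, using $\beta_m(s)=\min(m+1,s)=s$; Proposition~\ref{prop:control.stochastic.term} with Lemma~\ref{lem:infty.norm.f} (via the bound $\mathfrak{F}_{s,L}$) gives a variance bounded by $(2^d\sqrt{D_m}/\lambda_{\star}(m))^2\,\mathfrak{F}_{s,L}/(n h^d W_h)$, and a second crude lower bound $W_h\ge c h^{d}\cdot(\text{something})$—actually under Assumption~\ref{ass:1}, $W_h\ge \mathrm{Leb}(\Delta_0)>0$, so in fact $W_h$ is bounded below by a constant, and we can further simplify to a variance bound of order $1/(nh^d)$ up to constants. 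Plugging the prescribed $h\asymp n^{-1/(2s+d)}$ from~\eqref{eq:def-gamma-minimax-usual} into the sum of the two terms makes both of order $n^{-2s/(2s+d)}=N_n^2(s,L)$; tracking the explicit constants and optimizing (the choice of $h$ in~\eqref{eq:def-gamma-minimax-usual} is precisely the minimizer of the leading-order expression) yields exactly the constant $C(s,L)$ in~\eqref{eq:C.s.L}. The square root of the sum of squares is then $\le$ the sum of the square roots, and for $n$ large enough all the lower-order corrections (the slack in $W_h$, the fact that $h\le\rho$ eventually, etc.) are absorbed.

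\textbf{Lower bound (assertion 1).} For the minimax lower bound $\inf_{\tilde f_n}\sup_{f\in\Sigma(s,L)}\pwrisk(\tilde f_n,f)\gtrsim n^{-s/(2s+d)}$ I would use the standard two-point (Le Cam) argument. Fix a smooth compactly supported bump $\psi\in\mathcal{P}_{\llfloor s\rrfloor}$-approximable on $\voisin(\rho)$—more precisely, take a $C^\infty$ function supported in $\Delta_0$ with $\psi(0)\ne0$—and a fixed reference density $f_0$ on $\domain$ that is smooth near $t$ with $f_0(t)>0$ (e.g. locally constant, suitably normalized). Define $f_1 = f_0 + c\,h_n^{s}\,\psi(\cdot/h_n)$ with $h_n\asymp n^{-1/(2s+d)}$ and $c$ small; the rescaling guarantees $f_1\in\Sigma(s,L)$ (the bump contributes $\|u\|_\infty^s$-order deviations), that $f_1$ remains a density for small $c$ (the perturbation integrates to zero by centering and is $o(1)$ in sup norm), and that $\int (f_1-f_0)^2/f_0 \asymp h_n^{2s}\cdot h_n^d = h_n^{2s+d}\asymp 1/n$, so the Kullback--Leibler divergence between the $n$-fold product measures stays bounded. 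Since $|f_1(t)-f_0(t)|=c|\psi(0)|h_n^s\asymp N_n(s,L)$, Le Cam's inequality gives the claimed lower bound. Combined with the matching upper bound from assertion~2, this proves $N_n(s,L)=n^{-s/(2s+d)}$ is the minimax rate.

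\textbf{Main obstacle.} The routine analytic estimates (bounding $W_h$, inserting the explicit $h$, matching constants) are tedious but mechanical; the genuinely delicate point is ensuring the constants line up \emph{exactly} as in~\eqref{eq:C.s.L}, which requires being careful that the prescribed $h$ in~\eqref{eq:def-gamma-minimax-usual} is indeed the exact minimizer of the leading-order bias-plus-variance expression rather than just an order-optimal choice, and that the uniform eigenvalue lower bound $\lambda_{\star}(m)$ from Lemma~\ref{lem:usual.behavior} can be pulled out as stated. For the lower bound, the subtle part is constructing the perturbation $\psi$ so that $f_1$ genuinely lies in $\Sigma(s,L)$ \emph{with the prescribed constant $L$}, not merely in $\Sigma(s,L')$ for some larger $L'$—this forces the constant $c$ to be chosen as a function of $L$ and the fixed shape of $\psi$, and one must check the triangle inequality $|f_1(t+u)-(q_0(u)+c h_n^s\psi(u/h_n))|\le L\|u\|_\infty^s$ using a Taylor remainder for $\psi$ and the scaling of its derivatives. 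Neither obstacle is conceptually hard, but both demand careful bookkeeping.
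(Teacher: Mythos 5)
Your proposal follows essentially the same route as the paper: the upper bound combines the bias and variance bounds of Propositions~\ref{prop:control.bias} and~\ref{prop:control.stochastic.term} with Lemma~\ref{lem:usual.behavior} and Lemma~\ref{lem:infty.norm.f}, and the lower bound is the same two-point construction $f_1 = f_0 + A h_n^s\,(\text{centered bump})$ (the paper invokes a second-moment/likelihood-ratio criterion from Lepski rather than a Kullback--Leibler bound, but these are interchangeable here). One small bookkeeping caveat: to recover the constant $C(s,L)$ in~\eqref{eq:C.s.L} exactly, you should not lower-bound $W_h$ by $\mathrm{Leb}(\Delta_0)$ in the variance term; instead cancel one factor of $W_h$ in $v_\gamma^{\star} = (W_h\sqrt{D_m}/\lambda_\gamma)^2\,\finfty/(nh^dW_h)$ and then use $W_h\le 2^d$, which is what makes the prescribed $h$ in~\eqref{eq:def-gamma-minimax-usual} the exact minimizer of the resulting bound $\mathfrak{L}^2h^{2s}+\mathfrak{F}_{s,L}/(2^dnh^d)$.
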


    \begin{remark}\upshape
        Understanding the optimal parameters in \eqref{eq:def-gamma-minimax-usual} unfolds as follows. On simple domains, which are characterized by Assumption~\ref{ass:1}, it turns out that for a finite $m$, the ratios $W_h / \lambda_{\gamma}$ in Proposition~\ref{prop:control.bias}~and~\ref{prop:control.stochastic.term} remain uniformly bounded in the variable~$h$. Therefore, the bound on the bias \eqref{eq:bias.bound} depends on the smoothness parameter~$m$ asymptotically only through the factor $h^{\beta_m(s)}$, since $D_m$ and $\mathfrak{L}_{m,s,L}$ are constants for a finite $m$. By the same argument, the bound on the variance \eqref{eq:variance.bound} is not affected asymptotically by the choice of $m$. Therefore, given the decomposition of the squared risk measure as the sum between the squared bias and the variance of the estimator, recall Remark~\ref{rem:decomposition}, it becomes clear that the optimal choice of $m$ in this case will minimize $h^{\beta_m(s)} \equiv h^{\min(m+1,s)}$, which happens when $m\geq \llfloor s \rrfloor$. Since the rate $h^{\beta_m(s)}$ does not improve for larger $m\geq\llfloor s \rrfloor$ and $D_m$ increases with $m$, the optimal choice of $m$ for our pointwise $L^2$ risk measure is necessarily the smallest $m$ that satisfies $m\geq \llfloor s \rrfloor$, namely $m = \llfloor s \rrfloor$. Once the optimal $m = \llfloor s \rrfloor$ is selected, the optimization of the bandwidth $h$ is analogous to the approach employed for classical kernel density estimators.

        Heuristically, our local polynomial estimator wants to adapt its smoothness level $m$ to the smoothness level $s$ of the target density. The information of the target density is utilized optimally when $m$ is the largest integer which is smaller than $s$ and leaves room for a positive H\"older exponent in Definition~\ref{def:Sigma.s.L}. That value is necessarily $m = \llfloor s \rrfloor$. In particular, if the practitioner decides to fix a suboptimal value, say $m = 0$ when $s > 1$, and then optimize over $h$, the bias term in Proposition~\ref{prop:control.bias} will simply not have the smallest rate of convergence possible.

        On complicated domains, the optimization in $h$ is more subtle since the ratio $W_h / \lambda_{\gamma}$ has to be taken into account, which requires a case-by-case analysis; see, e.g., Proposition~\ref{prop:minimaxity.polynomial.sectors} and its proof.
    \end{remark}

    In Section~\ref{sec:selection-procedure}, we have constructed several statistical procedures depending on the choice of the discrete collection of polynomial degrees $\{m_{\ell} : \ell\in \N\}$. For the next result, we fix this collection by choosing
    \begin{equation}\label{eq:m.l}
        m_{\ell} = \left\lfloor\frac{\log n}{2\ell}\right\rfloor, \quad \ell\in \N.
    \end{equation}

    \begin{theorem}[Adaptivity of $\hat{f}(t)$ on simple domains]\label{thm:adaptivity.simple.domains}
        Suppose that Assumption~\ref{ass:1} holds.
        Let $\mathcal{K} = (0,\infty)^2$ and consider the set of H\"older-type functional classes $\{\Sigma(s,L) : (s,L)\in \mathcal{K}\}$.
        The following assertions hold:
        \begin{enumerate}
            \item The ARC is the admissible collection $\phi$ given by
                \begin{equation}\label{eq:ARC.simple.geometry}
                    \phi_n(s,L) = \left(\frac{\log n}{n}\right)^{s/(2s+d)},
                    \quad (s,L)\in \mathcal{K}, ~~n\in \N.
                \end{equation}
                More precisely, if $\psi$ is another admissible collection of normalizations such that $(s_0,L_0)$ belongs to $[\psi\ll \phi]$, then
                \begin{equation*}
                    [\psi\ll \phi]\subseteq \{s_0\} \times (0,\infty)
                    \quad \text{and} \quad
                    [\psi\ggg \phi]\supseteq  (s_0,\infty) \times (0,\infty).
                \end{equation*}
            \item The adaptive estimator $\hat{f}(t)$ is such that, for any $(s,L)\in \mathcal{K}$,
                \begin{equation*}
                    \begin{aligned}
                        &\limsup_{n\to\infty} \sup_{f\in \Sigma(s,L)} \phi_n^{-1}(s,L) \pwrisk(\hat{f}, f) \\[-2mm]
                        &~~\leq C \, \left[4 + \big\{(\delta-1) / \finfty\big\}^{1/2}\right] \, e^s \, \frac{2^d \sqrt{D^{\star}(s,d)}}{\lambda_{\bullet}(s,d)} \sqrt{d \, \delta} \, \, \mathfrak{F}_{s,L}^{~s/(2s+d)} (\mathfrak{L}_{s,L}^{\star})^{d/(2s+d)},
                    \end{aligned}
                \end{equation*}
                where $C > 0$ is a universal constant (independent of all parameters), $\delta > 1$ is a parameter that we fixed above~\eqref{eq:r.gamma.and.pen.gamma}, $\mathfrak{F}_{s,L}$ is any upper bound on $\sup_{f\in \Sigma(s,L)} \finfty$ (the existence of which is guaranteed by Lemma~\ref{lem:infty.norm.f}), and
                \vspace{-1mm}
                \begin{equation*}
                    \begin{aligned}
                        &D^{\star}(s,d) = \max_{0 \leq m' \leq \lfloor 2s+d \rfloor} D_{m'} \quad (\text{recall that } D_{m'} = \binom{m'+d}{d}) \\[-1mm]
                        &\lambda_{\bullet}(s,d) = \min_{0\leq m' \leq \lfloor 2s+d \rfloor} \lambda_{\star}(m) > 0, \\[-1mm]
                        &\mathfrak{L}_{s,L}^{\star} = \max_{0\leq m' \leq \lfloor 2s+d \rfloor} \mathfrak{L}_{m',s,L}.
                    \end{aligned}
                \end{equation*}
                Note that the positivity of $\lambda_{\bullet}(s,d)$ is guaranteed by Lemma~\ref{lem:usual.behavior}.
        \end{enumerate}
    \end{theorem}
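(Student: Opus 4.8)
The plan is to prove the two assertions separately. Assertion~2 (adaptivity with the explicit constant, hence in particular admissibility of $\phi$) is obtained by feeding the bias bound of Proposition~\ref{prop:control.bias} and the variance bound of Proposition~\ref{prop:control.stochastic.term} into the oracle-type inequality of Theorem~\ref{thm:oracle.result}, and then balancing squared bias against variance along the logarithmic grid \eqref{eq:h.in.Gamma.n}--\eqref{eq:m.l}. Assertion~1 (the identification of $\phi$ as \emph{the} ARC, with the stated structure of $[\psi\ll\phi]$ and $[\psi\ggg\phi]$) then combines this upper bound with the single-class minimax lower bound given in the first assertion of Proposition~\ref{prop:minimaxity.simple.domains} and a Lepski-type adaptation lower bound in the spirit of \cite{Lepski1991adaptive}, \cite{K-MMS-2014} and \cite{Rebelles2015}.

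For assertion~2, I would first certify the hypotheses of Theorem~\ref{thm:oracle.result}. Under Assumption~\ref{ass:1}, Lemma~\ref{lem:usual.behavior} yields $\lambda_{\gamma}\geq\lambda_{\star}(m)>0$ with decay in $m$ at most exponential and keeps the ratio $W_h/\lambda_{\gamma}$ bounded; since $h\leq\rho\leq e^{-1}$, the product $(h^m)^{-b}\lambda_{\gamma}\geq e^{mb}\lambda_{\star}(m)$ is bounded below once $b$ is large enough, so \eqref{eq:domain.lambda} holds, and Lemma~\ref{lem:infty.norm.f} makes $\finfty$ uniformly bounded over $f\in\Sigma(s,L)$ by $\mathfrak{F}_{s,L}$, so \eqref{eq:f.bounded} holds uniformly. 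Next I pick $\gamma_n=(m_{\ell_n},h_{\ell_n})\in\Gamma_{\!n}$ with $\ell_n$ the largest index for which $h_{\ell_n}$ still exceeds the bias--variance balanced bandwidth $h_n^{\star}\asymp(\finfty\log n/(n(\mathfrak{L}^{\star}_{s,L})^2))^{1/(2s+d)}$; as the grid has ratio $e$, $h_{\ell_n}\in[h_n^{\star},e\,h_n^{\star})$, which costs a factor $e^{s}$ in the bias and nothing elsewhere, accounting for the $e^{s}$ in the constant. One checks that $\ell_n\in\mathcal{L}_n$ for $n$ large (on a simple domain $W_h\asymp1$, so \eqref{eq:L.n.W.h} reads $nh_{\ell}^{d}\gtrsim(\log n)^3$, which holds since $n^{2s/(2s+d)}$ beats any power of $\log n$) and that $\llfloor s\rrfloor\leq m_{\ell_n}\leq\lfloor 2s+d\rfloor$ for $n$ large, since $\log n/(2\ell_n)\to s+d/2$; this last fact explains why the constant only involves $D^{\star}(s,d)$, $\lambda_{\bullet}(s,d)$ and $\mathfrak{L}^{\star}_{s,L}$, because every pair $\gamma'\preceq\gamma_n$ has degree $m'=m_{\ell'}\leq m_{\ell_n}\leq\lfloor 2s+d\rfloor$. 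Then, using Proposition~\ref{prop:control.bias}, the term $\mathbb{B}_{\gamma_n}=\max_{\gamma'\preceq\gamma_n}|\EE\{\hat{f}_{\gamma'}(t)\}-f(t)|$ is split over $\gamma'$ with $m'\geq\llfloor s\rrfloor$ (then $\beta_{m'}(s)=s$, $\mathfrak{L}_{m',s,L}=L$, and $(h')^{s}\leq h_{\ell_n}^{s}\leq e^{s}(h_n^{\star})^{s}$) and over $\gamma'$ with $m'<\llfloor s\rrfloor$ (then the monotone degree grid forces $h'<\rho\,n^{-1/(2(m'+1))}$, whence $(h')^{\beta_{m'}(s)}=(h')^{m'+1}<\rho^{m'+1}n^{-1/2}=o(\phi_n(s,L))$), the prefactors $W_{h'}\sqrt{D_{m'}}\mathfrak{L}_{m',s,L}/\lambda_{\gamma'}$ being absorbed into $2^{d}\sqrt{D^{\star}(s,d)}\,\mathfrak{L}^{\star}_{s,L}/\lambda_{\bullet}(s,d)$. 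Using Proposition~\ref{prop:control.stochastic.term} and \eqref{eq:r.gamma.and.pen.gamma}, $\mathrm{pen}(\gamma_n)\asymp d\delta\,|\log h_{\ell_n}|\asymp\tfrac{d\delta}{2s+d}\log n$ (the term $\Lambda_{\gamma_n}$ is $\mathcal{O}(1)$ on a simple domain) and $v^{\star}_{\gamma_n}\asymp\finfty D_{m_{\ell_n}}/(nh_{\ell_n}^{d})$, so the identity $\tfrac12-\tfrac{d}{2(2s+d)}=\tfrac{s}{2s+d}$ gives $\sqrt{2v^{\star}_{\gamma_n}\mathrm{pen}(\gamma_n)}\asymp\phi_n(s,L)$ while $c_{\gamma_n}\mathrm{pen}(\gamma_n)\asymp\phi_n(s,L)^2=o(\phi_n(s,L))$, and the additive remainder $\mathcal{O}(\sqrt{(\log n)^{d}/n})$ is also $o(\phi_n(s,L))$ because $s/(2s+d)<1/2$. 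Dividing by $\phi_n(s,L)$, taking $\limsup_n$, and collecting constants (at the balance point $\mathbb{B}_{\gamma_n}\asymp\mathbb{U}^{\star}_{\gamma_n}$, so the leading $5$ and $3+2\sqrt{(\delta-1)/\finfty}$ merge into $2\{4+\sqrt{(\delta-1)/\finfty}\}$ with the $2$ absorbed in the universal $C$, and the crude bounds $2^{d(d+s)/(2s+d)}\leq2^{d}$ and $(d\delta)^{s/(2s+d)}\leq\sqrt{d\delta}$ leave the powers $\mathfrak{F}_{s,L}^{s/(2s+d)}(\mathfrak{L}^{\star}_{s,L})^{d/(2s+d)}$ of the balance) yields the displayed bound; in particular $\phi$ is admissible in the sense of Definition~\ref{def:admissibility}.

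For assertion~1, admissibility of $\phi$ is the content of assertion~2. Let $\psi$ be another admissible collection and suppose $(s_0,L_0)\in[\psi\ll\phi]$; let $\tilde{f}_n$ be an estimator witnessing admissibility of $\psi$, so $\sup_{f\in\Sigma(s,L)}\pwrisk(\tilde{f}_n,f)\lesssim\psi_n(s,L)$ for every $(s,L)\in(0,\infty)^2$. Two lower bounds drive the conclusion. First, the first assertion of Proposition~\ref{prop:minimaxity.simple.domains} gives that $N_n(s,L)=n^{-s/(2s+d)}$ is the genuine minimax rate over $\Sigma(s,L)$, hence $\psi_n(s,L)\gtrsim n^{-s/(2s+d)}$ for every $(s,L)$. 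Second, a quantitative two-class lower bound of Lepski type --- obtained by a two-hypothesis reduction with a perturbation of prescribed scale and location supported in the cone $h\Delta_0\subseteq\voisin(h)$ furnished by Assumption~\ref{ass:1}, exactly in the spirit of \cite{Lepski1991adaptive} and the ARC analysis of \cite{K-MMS-2014} and \cite{Rebelles2015} --- says that if an estimator is of order $(a_n/n)^{s_{\mathrm{smooth}}/(2s_{\mathrm{smooth}}+d)}$ on the smoother of two classes for some $a_n\in[2,n]$, then it is at least of order $(\log(n/a_n)/n)^{s_{\mathrm{rough}}/(2s_{\mathrm{rough}}+d)}$ on the rougher one. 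Applied with $(s_{\mathrm{rough}},s_{\mathrm{smooth}})=(s_1,s_0)$ for $s_1<s_0$ and $a_n$ a slowly varying quantity, this shows that $\psi_n(s_1,L_1)/\phi_n(s_1,L_1)$ cannot tend to $0$ whenever $\psi_n(s_0,L_0)/\phi_n(s_0,L_0)\to0$; applied with $(s_{\mathrm{rough}},s_{\mathrm{smooth}})=(s_0,s_1)$ for $s_1>s_0$ and $a_n$ a fixed power of $n$, it shows that $\psi_n(s_1,L_1)/\phi_n(s_1,L_1)$ must blow up polynomially in $n$ once $\psi_n(s_0,L_0)/\phi_n(s_0,L_0)\to0$. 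The first conclusion gives $[\psi\ll\phi]\subseteq\{s_0\}\times(0,\infty)$, a manifold of dimension $\mathrm{dim}(\mathcal{K})-1=1$. The second gives, for every $s>s_0$, every $L>0$, and every $(s_0,L')\in[\psi\ll\phi]$ (all of which lie on the line $s=s_0$ by the first conclusion), that $\{\psi_n(s,L)/\phi_n(s,L)\}\times\{\psi_n(s_0,L')/\phi_n(s_0,L')\}$ has $\limsup_n=\infty$, because the first factor grows polynomially in $n$ while the second decays at most polylogarithmically (using $\psi_n(s_0,L')\gtrsim n^{-s_0/(2s_0+d)}$); hence $[\psi\ggg\phi]\supseteq(s_0,\infty)\times(0,\infty)$, an open subset of $\mathcal{K}$. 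Both bullets of Definition~\ref{def:ARC} are thus verified, so $\phi$ is the ARC and the displayed inclusions follow.

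The main obstacle is twofold. In assertion~2, the delicate point is the bookkeeping in $\mathbb{B}_{\gamma_n}$: the maximum runs over \emph{all} $\gamma'\preceq\gamma_n$, including low-degree, small-bandwidth pairs whose bias exponent $m'+1$ lies below $s$, and one must exploit the exact interplay between the bandwidth grid \eqref{eq:h.in.Gamma.n} and the degree grid \eqref{eq:m.l} --- the fact that $m_{\ell'}=m'$ forces $h'<\rho\,n^{-1/(2(m'+1))}$ --- to render these contributions negligible while keeping every surviving constant precisely as stated; this is where the finite degree range $\{0,\dots,\lfloor 2s+d\rfloor\}$ and the quantities $D^{\star}(s,d)$, $\lambda_{\bullet}(s,d)$, $\mathfrak{L}^{\star}_{s,L}$ earn their keep. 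In assertion~1, the crux is the quantitative Lepski adaptation lower bound itself: choosing the right two-hypothesis prior, controlling the likelihood ratio between $\Sigma(s_0,L_0)$ and $\Sigma(s_1,L_1)$, and extracting the precise exchange rate between the two normalized rates (in particular the polynomial inflation on the smoother side), all while using only the room $h\Delta_0\subseteq\voisin(h)$ that Assumption~\ref{ass:1} guarantees.
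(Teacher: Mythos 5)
Your treatment of assertion~2 is essentially the paper's proof: verify the hypotheses of Theorem~\ref{thm:oracle.result}, pick the grid index $\ell_0$ so that $h_{\ell_0}$ sits within a factor $e$ of the balanced bandwidth (whence the $e^s$), split $\mathbb{B}_{\gamma_0}$ according to whether $m'+1<s$ (where the coupling $m_{\ell'}=\lfloor \log n/(2\ell')\rfloor$ forces $(h')^{m'+1}\leq n^{-1/2}$, exactly as in the paper's \eqref{eq:bound.B.max.case.1}) or $m'+1\geq s$, bound $\mathbb{U}^{\star}_{\gamma_0}$ with $\sqrt{2v^{\star}_{\gamma_0}\,d\delta|\log h_{\ell_0}|}$ as the leading term, and collect constants; your bookkeeping ($8+2\sqrt{(\delta-1)/\finfty}$, the absorption of $D^{\star}$, $\lambda_{\bullet}$, $\mathfrak{L}^{\star}_{s,L}$) matches \eqref{eq:bias.param.0}--\eqref{eq:U.star.param.0}.

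For assertion~1 the philosophy is the same (a two-hypothesis Lepski-type reduction exploiting the cone $h\Delta_0$ of Assumption~\ref{ass:1}), but the packaging differs in a way that matters. The paper does not prove a free-standing ``quantitative trade-off lemma'' of the form you state; instead it reduces directly to the criterion of Section~5.6.1 / Lemma~4 of Rebelles (2015), which requires exhibiting $f_0\in\Sigma(s',L')$ and $f_1\in\Sigma(s,L)$ with $|f_1(t)-f_0(t)|\asymp\phi_n(s,L)$ and $\EE_{f_0,n}\{\prod_i f_1(X_i)/f_0(X_i)\}^2\lesssim n^{\tau}\phi_n(s,L)$ for some $s/(2s+d)<\tau<s'/(2s'+d)$. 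These are checked by recycling the pair $(f_0,f_1)$ from the proof of Proposition~\ref{prop:minimaxity.simple.domains} with the modified bandwidth $h\asymp(\log n^{\varrho}/n)^{1/(2s+d)}$, $0<\varrho<s'/(2s'+d)-s/(2s+d)$, so that the second-moment bound \eqref{eq:bound.second.moment.adaptive.simple.domains} gives $n^{\varrho}\leq n^{\tau}\phi_n(s,L)$; the two inclusions for $[\psi\ll\phi]$ and $[\psi\ggg\phi]$ then come for free from the cited lemma. Your version instead asserts the smooth-to-rough exchange rate (and its contrapositive, needed for the $[\psi\ggg\phi]$ inclusion) as a black box; that statement is itself the hard part of the lower bound and would have to be proved, essentially by carrying out the very likelihood-ratio computation the paper performs. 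Your downstream logic deducing the two set inclusions from the trade-off is sound and is in fact spelled out more explicitly than in the paper, but as written the proposal leaves the central quantitative ingredient unestablished. A minor additional caveat: your verification of condition \eqref{eq:domain.lambda} via $(h^m)^{-b}\lambda_{\gamma}\geq e^{mb}\lambda_{\star}(m)$ tacitly assumes $\lambda_{\star}(m)$ decays at most exponentially in $m$, which Lemma~\ref{lem:usual.behavior} does not assert (the paper is admittedly equally terse on this point).
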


\subsection{Domains with complicated geometries}\label{sec:complicated.domain}

    Set $\rho=1$ and define the following polynomial sectors in $\R^2$:
    \begin{equation}\label{eq:domain.k}
        \domain_k = \big\{(x,y)\in \R^2 : 0\leq x \leq 1, 0\leq y \leq x^k\big\}, \quad k > 1.
    \end{equation}
    % As an example, the domain $\domain_{3/2}$ is illustrated in Figure~\ref{fig:drawing.polynomial.domain}.
    This section is dedicated to the local estimation, at the origin $t=(0,0)$, of density functions supported on a given polynomial sector domain $\domain_k$.
    In particular, we obtain minimaxity and adaptivity results similar to those in Section~\ref{sec:simple.domain} but on the complicated domain $\domain_k$, see Proposition~\ref{prop:minimaxity.polynomial.sectors}~and~Theorem~\ref{thm:adaptivity.polynomial.sectors}, respectively.

    Item~2 of Remark~\ref{rem:other.complicated.domains} below explains how our results could be extended to other complicated domains without much difficulty. We believe that treating these polynomial sector examples separately elucidates the overall arguments and techniques and thus provides greater clarity for the reader. Furthermore, our statements here complement the simulations presented in Section~\ref{sec:simulations}, which are also performed on polynomial sectors.

    \begin{proposition}[Minimaxity of $\hat{f}_{\gamma}(t)$ on \texorpdfstring{$\domain_k$}{D\_k}]\label{prop:minimaxity.polynomial.sectors}
        Let $(s,L)\in (0,\infty)^2$ be given. Define
        \begin{equation}\label{eq:s.k.and.theta.k}
            s_k = \mathop{\mathrm{arg\,max}}_{\beta\in (0,s]} \theta_k(\beta),
            \quad \text{where} \quad
            \theta_k(\beta) = \frac{\beta-2\llfloor\beta\rrfloor(k-1)}{2\beta+k+1}.
        \end{equation}
        The following assertions hold:
        \begin{enumerate}
            \item There exists a positive real constant $C(s,L,k)$  such that, for any $f\in \Sigma(s,L)$, we have
                \begin{equation}\label{eq:rebroussement.upper.bound}
                    R_n(\hat{f}_{\gamma_k}, f)\leq C(s,L,k) n^{-\theta_k(s_k)},
                \end{equation}
                where $\gamma_k=(m,h)$ with $m=\llfloor s_k\rrfloor$ and $h=n^{-1/(2s_k+k+1)}$.
            \item Assume further that $s\in (0,1]$, then there exists a positive real constant $c(s,L,k)$ such that
                \begin{equation}\label{eq:rebroussement.lower.bound}
                 \inf_{\tilde{f}}\sup_{f\in \Sigma(s,L)} R_n(\tilde{f}, f)\geq c(s,L,k) n^{-\theta_k(s)}.
                \end{equation}
                In particular, $\{n^{-s/(2s+k+1)} : n\in \N\}$ is the minimax rate of convergence on $\Sigma(s,L)$ as soon as $s\in (0,1]$; refer to Item~1 of Remark~\ref{rem:other.complicated.domains} below.
        \end{enumerate}
    \end{proposition}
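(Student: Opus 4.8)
The plan is to establish the two assertions separately, relying on the bias/variance bounds of Propositions~\ref{prop:control.bias}~and~\ref{prop:control.stochastic.term} for the upper bound, and a Fano/Assouad-type two-hypothesis argument for the lower bound. Throughout, the geometry of $\domain_k$ enters only through the ratio $W_h / \lambda_{\gamma_k}$, so the first task in both parts is to pin down the asymptotics of $W_h$ and $\lambda_{\gamma_k}$ at the origin $t=(0,0)$. For $W_h$: by definition $W_h = \int_{\voisin(h)} h^{-d}\rd u$ and $\voisin(h) = \{(x,y): 0\le x\le h,\ 0\le y\le x^k\}$ (for $h\le 1$), so $W_h = h^{-2}\int_0^h x^k\,\rd x = h^{k-1}/(k+1)$, which is Equation~\eqref{eq:W.h.calculation} referenced in the text. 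For $\lambda_{\gamma_k}$: one decomposes $v^\top \mathcal{B}_{\gamma_k} v$ over the monomial basis (Equation~\eqref{eq:v.top.B.v.decomp}), uses the scaling $u\mapsto u/h$ on the thin sector, and shows the smallest eigenvalue behaves like $\lambda_{\gamma_k} \asymp h^{(k-1)\cdot 2m}$ up to constants depending on $m$ and $k$ (Equation~\eqref{eq:minoration.lambda}); the lower bound on $\lambda_{\gamma_k}$ is the delicate linear-algebra estimate, but it is exactly the kind of computation flagged in the discussion after Proposition~\ref{prop:control.stochastic.term}, so I would treat the resulting order $W_h/\lambda_{\gamma_k}\asymp h^{-(k-1)\cdot 2m}$ as established.

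\textbf{Item 1 (upper bound).} With $\gamma_k=(m,h)$, $m=\llfloor s_k\rrfloor$, and $h = n^{-1/(2s_k+k+1)}$, I would plug the above asymptotics into Remark~\ref{rem:decomposition}'s bias-variance decomposition. Since $f\in\Sigma(s,L)$ and $m=\llfloor s_k\rrfloor \le \llfloor s\rrfloor$, Proposition~\ref{prop:control.bias} gives a squared bias bounded by a constant times $(W_h/\lambda_{\gamma_k})^2 D_m \cdot h^{2\beta_m(s)}$; here one uses $\beta_m(s) = \min(m+1,s) \ge s_k$ because $s_k\le s$ and $m+1 = \llfloor s_k\rrfloor+1 > s_k$. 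Combined with $W_h/\lambda_{\gamma_k}\asymp h^{-(k-1)2m}$ this is of order $h^{2(s_k - 2m(k-1))}$. Proposition~\ref{prop:control.stochastic.term} together with Lemma~\ref{lem:infty.norm.f} (uniform boundedness of $\finfty$ over $\Sigma(s,L)$) gives a variance bounded by a constant times $(W_h/\lambda_{\gamma_k})^2/(n h^d W_h) \asymp h^{-2(k-1)2m}\cdot h^{-(k-1)}/(n h^2) = h^{-(4m(k-1)+k+1)}/n$. Wait—more carefully: $(W_h/\lambda_{\gamma_k})^2/(nh^dW_h) = h^{-2(k-1)2m}/(n h^2 \cdot h^{k-1}/(k+1))\asymp h^{-4m(k-1)-k-1}/n$; adding the exponent bookkeeping and substituting $h=n^{-1/(2s_k+k+1)}$, both the squared-bias term $h^{2(s_k-2m(k-1))}$ and the variance term are, by the very definition of $\theta_k$ in~\eqref{eq:s.k.and.theta.k}, of order $n^{-2\theta_k(s_k)}$. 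Taking square roots and choosing $C(s,L,k)$ to absorb all the $m$-, $k$-, $L$-, and $\finfty$-dependent constants yields~\eqref{eq:rebroussement.upper.bound}. The optimality of the choice $s_k = \mathrm{arg\,max}_{\beta\in(0,s]}\theta_k(\beta)$ is precisely what makes this exponent the best one obtainable by this family of estimators over the class $\Sigma(s,L)$.

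\textbf{Item 2 (lower bound, $s\in(0,1]$).} For $s\le 1$ we have $\llfloor s\rrfloor = 0$ (when $s<1$; and $\llfloor 1\rrfloor=0$), so $\theta_k(s) = s/(2s+k+1)$ and also $s_k = s$ by a short monotonicity check of $\theta_k$ on $(0,s]$ when $\llfloor\beta\rrfloor\equiv 0$. The standard route is a two-point (or many-point) reduction: fix a bona fide density $f_0$ on $\domain_k$ that is bounded away from $0$ and $\infty$ near the origin and lies in $\Sigma(s,L/2)$, then set $f_1 = f_0 + c_n\, g(\cdot/h_n)$ where $g$ is a fixed smooth bump supported in $\voisin(1)$ with $\int g = 0$, $h_n = n^{-1/(2s+k+1)}$, and $c_n$ chosen as large as possible subject to (i) $f_1 \ge 0$, (ii) $f_1$ integrates to $1$, and (iii) $f_1\in\Sigma(s,L)$. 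Condition (iii) forces $c_n h_n^{-s}\lesssim$ const, i.e.\ $c_n \asymp h_n^{s}$ — wait, we also need the perturbation to respect the volume scaling: the separation at $t$ is $|f_1(t)-f_0(t)| = c_n |g(0)| \asymp h_n^s$. The $\chi^2$ (or Hellinger) distance between the $n$-fold products satisfies $n\int (f_1-f_0)^2/f_0 \asymp n\, c_n^2 \,\mathrm{Leb}(\voisin(h_n)) \asymp n\, h_n^{2s}\, h_n^{k+1} $, and with $h_n = n^{-1/(2s+k+1)}$ this is $\mathcal{O}(1)$; hence the two hypotheses are not asymptotically distinguishable, and by Le~Cam's two-point lemma $\inf_{\tilde f}\sup_{f\in\Sigma(s,L)} R_n(\tilde f,f)\gtrsim h_n^s = n^{-s/(2s+k+1)} = n^{-\theta_k(s)}$, giving~\eqref{eq:rebroussement.lower.bound}. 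The ``in particular'' statement then follows by comparing with Item~1: when $s\in(0,1]$, $s_k=s$ and $\theta_k(s_k)=\theta_k(s)=s/(2s+k+1)$, so the upper and lower bounds match and $\{n^{-s/(2s+k+1)}\}$ is the minimax rate.

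\textbf{Main obstacle.} The routine parts are the bias-variance substitution in Item~1 and Le~Cam's lemma in Item~2. The genuinely hard step — and the one I would expect to consume most of the proof — is the sharp lower bound $\lambda_{\gamma_k}\gtrsim h^{2m(k-1)}$ on the smallest eigenvalue of the Gram matrix on the pinched sector: one must show that no unit-norm polynomial combination of degree $\le m$ can be simultaneously small in $L^2(w_h)$ on the sliver $\{0\le y\le x^k\}$ beyond this threshold, which amounts to a quantitative non-degeneracy estimate for the rescaled monomials against the singular weight, and it is precisely here that the exponent $(k-1)$ — the ``pointiness unit'' alluded to in the discussion following Proposition~\ref{prop:control.stochastic.term} — is produced. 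A secondary subtlety in Item~1 is the discrete maximization defining $s_k$: one must verify that the candidate $\beta = s_k$ indeed satisfies $\llfloor s_k\rrfloor\le\llfloor s\rrfloor$ and $\beta_{\llfloor s_k\rrfloor}(s) \ge s_k$ so that Proposition~\ref{prop:control.bias} applies with the claimed rate, but this is a direct consequence of $s_k\in(0,s]$.
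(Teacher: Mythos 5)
Your architecture coincides with the paper's: compute $W_h=h^{k-1}/(k+1)$, lower-bound the smallest eigenvalue of the Gram matrix on the sliver, feed the ratio $W_h/\lambda_{\gamma_k}$ into the bias/variance bounds with $h=n^{-1/(2s_k+k+1)}$ for Item~1, and run a two-point reduction with a mean-zero bump at scale $h_n$ for Item~2. The exponent bookkeeping in Item~1 is correct (your shortcut $\beta_{\llfloor s_k\rrfloor}(s)\ge s_k$ replaces the paper's embedding $\Sigma(s,L)\subseteq\Sigma(s_k,L_k)$, to the same effect), and your Item~2 is the paper's argument with $\chi^2$/Le~Cam in place of Kullback--Leibler; both give $nc_n^2\,\mathrm{Leb}\{\voisin(h_n)\}\asymp nh_n^{2s+k+1}=O(1)$ and separation $h_n^s$.

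The genuine gap is the one you yourself flag and then assume away: the lower bound on $\lambda_{\gamma_k}$. Without it, Item~1 is conditional on an unproved estimate, and it is exactly the step where the geometry of $\domain_k$ enters. The paper closes it as follows: expanding $v^{\top}\mathcal{B}_{\gamma_k}v$ in the monomial basis gives $\sum_{\alpha,\beta}c_k(\alpha,\beta)v_{\alpha}v_{\beta}h^{(k-1)(\alpha_2+\beta_2+1)}=h^{(k-1)(2m+1)}\psi_v(1/h)$, where $\psi_v$ is a (generalized) polynomial whose value at $0$ is the coefficient of the lowest power of $h$; since $v^{\top}\mathcal{B}_{\gamma_k}v>0$ for all $h$ by positive definiteness, $\psi_v>0$ on $[0,\infty)$, its minimum $\Psi_{\star}(v)$ is positive and depends continuously on $v$, and compactness of the unit sphere yields a uniform $\Psi_{\star}>0$ with $\lambda_{\gamma_k}\ge\Psi_{\star}h^{(k-1)(2m+1)}$. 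Note also a small internal inconsistency in your write-up: you state $\lambda_{\gamma_k}\asymp h^{(k-1)\cdot 2m}$, which together with $W_h\asymp h^{k-1}$ would give $W_h/\lambda_{\gamma_k}\asymp h^{(k-1)(1-2m)}$, not the $h^{-2m(k-1)}$ you use downstream; the correct eigenvalue exponent is $(k-1)(2m+1)$, and only then does your (correct) ratio, and hence the rate $n^{-\theta_k(s_k)}$, follow.
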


    \begin{remark}\label{rem:other.complicated.domains}\upshape
        Let us make two comments:
        \begin{enumerate}
            \item Through the upper bound~\eqref{eq:rebroussement.upper.bound}, Proposition~\ref{prop:minimaxity.polynomial.sectors} shows that the estimator is consistent as soon as $\theta_k(s_k)>0$. This is always the case since, for any $\beta\in (0,\min(s,1)]$, we have $\theta_k(\beta) = \beta/(2\beta+k+1) >0$. Using similar arguments, it is easily seen that for any $s\in (0,1]$ and any $k>1$, we have $s_k = s$. However, for $k>2$ and $s>1$, we always have $s-2\llfloor s \rrfloor(k-1) < 0$, which implies $s_k = 1 < s$. Moreover, note that, for any $s>0$ and any $k>1$, we have
                \begin{equation*}
                    \theta_k(s_k) < \frac{s}{2s+2}.
                \end{equation*}
                Thus, the rate of convergence established in Proposition~\ref{prop:minimaxity.polynomial.sectors} is smaller compared with the one established for simple domains in Proposition~\ref{prop:minimaxity.simple.domains}. The estimation problem considered in the present section is more difficult because of the local pinch of $\domain_k$ near the origin.
            \item Results analogous to those of Proposition~\ref{prop:minimaxity.polynomial.sectors} can be obtained for other domains $\domain$ with complicated geometries.
                In particular, for $\alpha,\beta\in \mathbb{N}_0^d$ and $h>0$, define
                \begin{equation*}
                    I_{\domain}(\alpha,\beta,h)=\int_{\R^d} \varphi_{\alpha}\left(\frac{u}{h}\right)\varphi_{\beta}\left(\frac{u}{h}\right)w_h(u) \rd u.
                \end{equation*}
                Assume that there exist a function $a:\mathbb{N}_0^d\to (0,\infty)$ and an increasing function $G:\mathbb{N}_0^d\to (0,\infty)$ such that
                \begin{equation*}
                    I_{\domain}(\alpha,\beta,h)=a(\alpha+\beta)h^{G(\alpha+\beta)}, \quad \text{for any $\alpha,\beta\in \mathbb{N}_0^d$.}
                \end{equation*}
                Now define
                \begin{equation*}
                    s_{\domain} = \mathop{\mathrm{arg\,max}}_{\beta\in (0,s]} \theta_{\domain}(\beta),
                    \quad \text{where} \quad
                    \theta_{\domain}(\beta) = \frac{\beta+G(0)-G(2\llfloor\beta\rrfloor)}{2\beta+d+G(0)}.
                \end{equation*}
                In this particular setting, it is possible to derive a result akin to~\eqref{eq:rebroussement.upper.bound}, with the rate of convergence $n^{-\theta(s_{\domain})}$ and the bandwidth $h=n^{-1/\{2s_{\domain}+d+G(0)\}}$, by following the proof of Proposition~\ref{prop:minimaxity.polynomial.sectors}.
        \end{enumerate}
    \end{remark}

    Now consider the adaptive estimator $\hat{f}(t)$ constructed in Section~\ref{sec:selection-procedure} and choose $m_{\ell}=0$ for all $\ell\in \N$. The following theorem ensures that it attains the ARC over the collection of H\"older-type functional classes $\{\Sigma(s,L) : (s,L)\in (0,1]\times(0,\infty)\}$. The ARC here corresponds to the minimax rate of convergence stated just below \eqref{eq:rebroussement.lower.bound} up to a logarithmic factor.

    \begin{theorem}[Adaptivity of $\hat{f}(t)$ on \texorpdfstring{$\domain_k$}{D\_k}]\label{thm:adaptivity.polynomial.sectors}
        The following assertions hold:
        \begin{enumerate}
            \item The ARC is the admissible collection $\phi$ given by
                \begin{equation*}
                    \phi_n(s,L) = \left(\frac{\log n}{n}\right)^{s/(2s+k+1)},
                    \quad (s,L)\in (0,1]\times(0,\infty), ~~n\in \N.
                \end{equation*}
            \item
                Let $(s,L)\in (0,1] \times (0,\infty)$ be given.
                There exists a positive real constant $\tilde{C}(s,L,k,\delta) > 0$ such that
                \begin{equation} \label{eq:rate.rebrou.adapt}
                    \limsup_{n\to\infty} \sup_{f\in \Sigma(s,L)} \left(\frac{\log n}{n}\right)^{-s/(2s+k+1)} R(\hat{f}, f)
                    \leq \tilde{C}(s,L,k,\delta).
                \end{equation}
        \end{enumerate}
    \end{theorem}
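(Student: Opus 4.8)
The plan is to obtain assertion~2 as a consequence of the oracle inequality in Theorem~\ref{thm:oracle.result}, and then to deduce assertion~1 by combining assertion~2 with the minimax lower bound of Proposition~\ref{prop:minimaxity.polynomial.sectors}. I would start with assertion~2. Choosing $m_\ell\equiv 0$ is the appropriate choice here: for $s\in(0,1]$ one has $\llfloor s\rrfloor=0$, and higher degrees would only inflate the ratio $W_h/\lambda_\gamma$ on the pinch (recall the discussion after Proposition~\ref{prop:control.stochastic.term}) without improving the bias exponent. With $m_\ell\equiv 0$ everything collapses: $\mathcal{P}_0$ is one-dimensional, $D_0=1$, $\Phi_{(0,h)}\equiv 1$, hence $\mathcal{B}_{(0,h)}=\lambda_{(0,h)}=W_h$, and $\hat f_{(0,h)}(t)$ is just the histogram estimator on the bin $\domain_k\cap(t+h\Delta)$. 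On $\domain_k$ with $t=(0,0)$ and $\rho=1$, the computation \eqref{eq:W.h.calculation} gives $W_h=h^{k-1}/(k+1)$, so that $c_{(0,h)}=(k+1)/(nh^{k+1})$, $\varepsilon_{(0,h)}=(\delta-1)(k+1)/(nh^{k+1})$, $v_{(0,h)}^\star=(k+1)\finfty/(nh^{k+1})$ and $\Lambda_{(0,h)}=2\lvert\log W_h\rvert\lesssim_k\lvert\log h\rvert$ are all explicit. I would use these to check the hypotheses of Theorem~\ref{thm:oracle.result}: uniform boundedness of $\finfty$ over $\Sigma(s,L)$ comes from Lemma~\ref{lem:infty.norm.f}, and the domain condition~\eqref{eq:domain.lambda} reduces, since $h^{m_\ell}\equiv 1$, to the polynomial decay of $\lambda_{(0,h)}=W_h$ just recorded.

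Next I would feed these quantities into~\eqref{eq:thm:oracle.result.to.prove}. Since $s\le 1$ we have $\beta_0(s)=s$ and $\mathfrak{L}_{0,s,L}=L$, so Proposition~\ref{prop:control.bias} gives $\lvert\EE\{\hat f_{(0,h')}(t)\}-f(t)\rvert\le L(h')^{s}$ and hence $\mathbb{B}_{(0,h_\ell)}\le L\,h_\ell^{s}$. On $\mathcal{L}_n$ one has $\lvert\log h_\ell\rvert=\ell\le\log n$, so $\mathrm{pen}((0,h_\ell))\lesssim_{k,\delta}\log n$ and $\mathbb{U}_{(0,h_\ell)}^\star\lesssim_{s,L,k,\delta}\sqrt{\log n/(nh_\ell^{k+1})}+\log n/(nh_\ell^{k+1})$; the potentially troublesome factor $\{3+2\sqrt{(\delta-1)/\finfty}\}$ in Theorem~\ref{thm:oracle.result} causes no harm here because the offending $1/\sqrt{\finfty}$ cancels against the $\sqrt{\finfty}$ sitting inside $\sqrt{v_\gamma^\star}$. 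The oracle bound then reads, up to a constant depending on $(s,L,k,\delta)$, $\min_{\ell\in\mathcal{L}_n}\{h_\ell^{s}+\sqrt{\log n/(nh_\ell^{k+1})}\}$ plus the remainder $\mathcal{O}(\log n/\sqrt n)$. Balancing the two competing terms yields the oracle bandwidth $h^\star=(\log n/n)^{1/(2s+k+1)}$ and value $\phi_n(s,L)=(\log n/n)^{s/(2s+k+1)}$; I would then verify that for $n$ large some $h_\ell$ with $\ell\in\mathcal{L}_n$ lies within a factor $e$ of $h^\star$ (this uses $n(h^\star)^{k+1}=n^{2s/(2s+k+1)}(\log n)^{(k+1)/(2s+k+1)}\gg(\log n)^3$ and $\lvert\log h^\star\rvert\le\log n$) and that $\log n/\sqrt n=o(\phi_n(s,L))$ since $s/(2s+k+1)<1/2$. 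Dividing by $\phi_n(s,L)$, taking $\limsup_n$, and using $\sup_{f\in\Sigma(s,L)}\finfty\le\mathfrak{F}_{s,L}<\infty$ then gives~\eqref{eq:rate.rebrou.adapt}, with $\tilde C(s,L,k,\delta)$ read off from the accumulated constants.

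For assertion~1, admissibility of $\phi$ is exactly~\eqref{eq:rate.rebrou.adapt}. To show that $\phi$ is \emph{the} ARC in the sense of Definition~\ref{def:ARC}, I would replicate the argument from the proof of Theorem~\ref{thm:adaptivity.simple.domains} with the effective dimension $k+1$ playing the role of $d$: given any other admissible collection $\psi$ and any $(s_0,L_0)\in[\psi\ll\phi]$, a Lepski-type reduction to a pair of densities in $\Sigma(s_0,L_0)$ and $\Sigma(s_1,L_1)$ with $s_1<s_0$ --- whose engine is the minimax lower bound $n^{-s/(2s+k+1)}$ valid on $\domain_k$ for $s\in(0,1]$ from item~2 of Proposition~\ref{prop:minimaxity.polynomial.sectors}, together with the fact that the extra logarithmic factor in $\phi$ is the unavoidable pointwise-adaptation price of \cite{Lepski1991adaptive} --- forces $[\psi\ll\phi]\subseteq\{s_0\}\times(0,\infty)$ and $[\psi\ggg\phi]\supseteq(s_0,\infty)\times(0,\infty)$.

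I expect the main obstacle to be making Theorem~\ref{thm:oracle.result} genuinely applicable on the pinched domain $\domain_k$: the eigenvalue $\lambda_{(0,h)}=W_h=h^{k-1}/(k+1)$ degenerates as $h\to 0$, so condition~\eqref{eq:domain.lambda} and every $\lambda_\gamma$-dependent quantity ($\Lambda_\gamma$, $\varepsilon_\gamma$, $c_\gamma$, $v_\gamma^\star$, and the $\finfty$-dependent prefactor of $\mathbb{U}_\gamma^\star$) have to be tracked through this explicit but vanishing value, all while keeping the constants explicit in $(s,L,k,\delta)$. By comparison, the matching of $h^\star$ to a grid point of $\mathcal{L}_n$ and the bias/variance bookkeeping are routine, although they still require some care because of the $(\log n)^3$ cutoff in~\eqref{eq:L.n.W.h}.
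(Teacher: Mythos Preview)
Your treatment of assertion~2 is correct and matches the paper's approach closely; in fact your observation that $\lambda_{(0,h)}=W_h$ exactly (so that $W_h/\lambda_\gamma=1$) is slightly sharper than the paper, which invokes the general lower bound \eqref{eq:minoration.lambda} at $m=0$. One technicality worth flagging: with $m_\ell\equiv 0$ the literal statement of condition~\eqref{eq:domain.lambda} reads $\lambda_\gamma\geq 1$, which fails since $\lambda_{(0,h)}=h^{k-1}/(k+1)<1$. What the proof of Theorem~\ref{thm:oracle.result} actually uses is only $\max_{\gamma\in\Gamma_n}1/\lambda_\gamma\leq \exp\{c(\log n)^2\}$, and your polynomial bound $1/\lambda_\gamma=(k+1)h^{-(k-1)}$ amply suffices; the paper glosses over this in the same way you do, so this is not a gap in your argument relative to the paper, but you should be explicit about it.

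For assertion~1 your sketch is on the right track but vaguer than what the paper does, and one phrase is misleading. The ``engine'' is \emph{not} the minimax lower bound statement of item~2 of Proposition~\ref{prop:minimaxity.polynomial.sectors}; it is the explicit two-point construction $f_0,f_1$ from~\eqref{eq:fct-LB} in the \emph{proof} of that proposition, re-used with a bandwidth $h\asymp(\varrho\log n/n)^{1/(2s+k+1)}$ that carries a small logarithmic factor. With this bandwidth one verifies the three conditions of \cite{Rebelles2015} (the same scheme as in the ARC part of the proof of Theorem~\ref{thm:adaptivity.simple.domains}): $f_0\in\Sigma(s',L')$, $f_1\in\Sigma(s,L)$ with $s<s'$, $|f_1(0)-f_0(0)|\asymp\phi_n(s,L)$, and a likelihood-ratio second-moment bound of order $n^\tau\phi_n(s,L)$ for some $\tau\in(s/(2s+k+1),\,s'/(2s'+k+1))$. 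Simply citing the minimax rate and invoking Lepski's unavoidable-log heuristic does not by itself yield the ARC conclusion in the sense of Definition~\ref{def:ARC}; you need those specific test densities and the Rebelles verification.
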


    \begin{remark}\upshape
        Theorem~\ref{thm:adaptivity.polynomial.sectors} exclusively studies the case of a smoothness parameter $s$ less than or equal to $1$.
        We conjecture that a result similar to~\eqref{eq:rate.rebrou.adapt}, with rate of convergence $(n^{-1} \log n)^{\theta_k(s_k)}$, can be obtained for any $s>1$ by choosing
        \begin{equation*}
            m_\ell = \left\lfloor \frac{\log n}{2\ell}-\frac{k+1}{2}\right\rfloor.
        \end{equation*}
        Nevertheless, this rate of convergence may be not the ARC for $s > 1$.
    \end{remark}

\subsection{Extension to unknown domains}\label{sec:unknown.domains}

    Although the general problem of constructing optimal estimators in this setting is beyond the scope of this paper, we propose to consider a simple but common situation: assume that the support $\domain$ is an unknown compact convex set with finite Lebesgue measure $\lambda(\domain)$, and assume further that the target density $f$ is bounded, i.e., its supremum norm over $\R^d$, $\|f\|_{\infty}$, is finite. In order to control the error related to the estimation of the support, we have to consider an integrated risk (over $\R^d$) rather than a pointwise risk. That is, for any estimator $\tilde{f}$ of $f$, we are interested in bounding the $L^1$ integrated risk
    \begin{equation*}
        {IR}_n(\tilde{f}, f) = \EE\left\{\int_{\R^d} |\tilde{f}(x) - f(x)| \rd x\right\}.
    \end{equation*}

    We propose to estimate $f$ by splitting the data into two disjoint subsamples: we estimate the support of $f$ using the first subsample and then use our adaptive estimation procedure on the second subsample for each point in the estimated domain. More precisely, for some proportion parameter $\alpha\in (0,1)$ satisfying $\alpha n\in \N$, define the two subsamples
    \begin{equation*}
        \smash{\mathbb{X}_n^{(1)}} = \smash{(X_1,\ldots,X_{(1 - \alpha) n})}, \quad \smash{\mathbb{X}_n^{(2)}} = \smash{(X_{(1 - \alpha) n + 1},\ldots,X_n)},
    \end{equation*}
    so that $\smash{\mathbb{X}_n^{(1)}} \!\cap \smash{\mathbb{X}_n^{(2)}} = \emptyset$ and $\smash{\mathbb{X}_n^{(1)}} \!\cup \smash{\mathbb{X}_n^{(2)}} = \mathbb{X}_n$ in particular.

    Define $\hat{\domain}_n$ as the convex hull of the first subsample $\mathbb{X}_n^{(1)}$. This serves as an estimator for the convex support $\domain$ of the target density $f$, a subject that has been extensively explored in the literature. For an in-depth review, we direct the reader to \cite{Brunel2018}. Now, we remark that, conditionally on the observations in $\smash{\mathbb{X}_n^{(1)}}$, the observations in $\smash{\mathbb{X}_n^{(2)}} \!\cap \hat{\domain}_n$ are independent and identically distributed with a common density function given by
    \begin{equation*}
        g(x) = \frac{f(x) \1_{\hat{\domain}_n}(x)}{\int_{\hat{\domain}_n} f(y) \rd y}, \quad x\in \R^d,
    \end{equation*}
    where $f\1_{\hat{\domain}_n}\leq f$ (since $\hat{\domain}_n\subseteq \domain$).

    By definition, note that $\smash{\mathbb{X}_n^{(2)}}$ contains $\alpha n$ observations, i.e., $\smash{\mathrm{card}(\mathbb{X}_n^{(2)})} = \alpha n$, and let
    \begin{equation*}
        \hat{p}_n = \frac{\mathrm{card}(\mathbb{X}_n^{(2)} \!\cap \hat{\domain}_n)}{\mathrm{card}(\mathbb{X}_n^{(2)})}
    \end{equation*}
    denote the proportion of those observations which fall into the estimated support $\hat{\domain}_n$; $\hat{p}_n$ is the empirical counterpart of $p_n \equiv \smash{\int_{\hat{\domain}_n} f(y) \rd y}$. Moreover, conditionally on $\smash{\mathbb{X}_n^{(1)}}$, the support $\hat{\domain}_n$ of $g$ is known, so our local polynomial estimation procedure can be used to obtain the adaptive estimator $\hat{g}(t)$ defined in~\eqref{eq:selection.rule} for each point $t\in \hat{\domain}_n$. Given the above relationship between $g$ and $f$, it is natural to define the estimator of $f$:
    \begin{equation*}
        \hat{f}(x) = \hat{p}_n \hat{g}(x), \quad x\in \R^d.
    \end{equation*}
    Using the triangle inequality, Fubini's theorem, and Jensen's inequality, the integrated risk of $\hat{f}$ can be bounded as follows:
    \begin{align*}
        {IR}_n(\hat{f}, f)
        &= \EE\left\{\int_{\domain} \big|\hat{p}_n \hat{g}(x) - \hat{p}_n g(x) + \hat{p}_n g(x) - p_n g(x) + p_n g(x) - f(x)\big| \rd x\right\} \\
        &\leq \EE\left[\int_{\hat{\domain}_n} \EE\big\{\left|\hat{g}(x) - g(x)\right| \big| \, \mathbb{X}_n^{(1)}\big\} \rd x\right] + \EE\left(|\hat{p}_n - p_n|\right) + \EE\left\{\int_{\domain\backslash \hat{\domain}_n} f(x) \rd x\right\} \\
        &\leq \EE\left(\int_{\hat{\domain}_n} \left[\EE\big\{\left|\hat{g}(x) - g(x)\right|^2 \big| \, \mathbb{X}_n^{(1)}\big\}\right]^{1/2} \rd x\right) + \EE\left(|\hat{p}_n - p_n|\right) + \EE\left\{\int_{\domain\backslash \hat{\domain}_n} f(x) \rd x\right\}.
    \end{align*}
    To bound $\EE(|\hat{p}_n - p_n|)$, we use the fact that, conditionally to $\smash{\mathbb{X}_n^{(1)}}$, the quantity $\alpha n \, \hat{p}_n$ has a Binomial distribution with parameters $\alpha n$ and $p_n$. To bound $\smash{\EE\{\int_{\domain\backslash \hat{\domain}_n} f(x) \rd x\}}$, we use Theorem~7 of \cite{Brunel2018} with the choice $x = \log\{(1 - \alpha) n\} \geq 0$. As a result, for $n$ large enough, we have
    \begin{equation*}
        \begin{aligned}
            {IR}_n(\hat{f}, f)
            &\lesssim \lambda(\domain) \times \EE\left(\sup_{x\in \hat{\domain}_n} \left[\EE\big\{\left|\hat{g}(x) - g(x)\right|^2 \big| \, \mathbb{X}_n^{(1)}\big\}\right]^{1/2} \1_{\{\hat{p}_n \geq 1 / 2\}}\right) \\
            &\quad+ \lambda(\domain) \times \PP(\hat{p}_n < 1 / 2) + \frac{1}{\sqrt{\alpha n}} + \left[\frac{\|f\|_{\infty}+1}{\{(1 - \alpha) n\}^{2/(d+1)}} + \frac{\log\{(1 - \alpha) n\}}{(1 - \alpha) n} + \frac{1}{(1 - \alpha) n}\right].
        \end{aligned}
    \end{equation*}

    By Theorem~7 of \cite{Brunel2018}, the probability $\PP(\hat{p}_n < 1 / 2)$ is exponentially small in $n$. It remains to bound the conditional expectation in the last equation. Note that if $n$ is large enough and the target density $f$ belongs to $\Sigma(s,L)$, then $g$ belongs to $\Sigma(s,L/p_n)\subseteq \Sigma(s,2L)$ with probability larger than $1/\{(1 - \alpha) n\}$, again by Theorem~7 of \cite{Brunel2018}. Hence, conditionally on $\mathbb{X}_n^{(1)}$, by applying item~2 of Theorem~\ref{thm:adaptivity.simple.domains} for every $x = t\in \hat{\domain}_n$ in $[\EE\{|\hat{g}(x) - g(x)|^2 | \, \smash{\mathbb{X}_n^{\scriptscriptstyle(1)}}\}]^{1/2} \1_{\{\hat{p}_n \geq 1 / 2\}}$, we obtain
    \begin{equation*}
        {IR}_n(\hat{f}, f) \lesssim (n^{-1} \log n)^{s/(2s+d)} + n^{-2/(d+1)},
    \end{equation*}
    where $\lesssim$ depends on $d$, $\|f\|_{\infty}$, $s$ $L$ and $\alpha$.

    Assuming further that the target density $f$ is bounded away from zero on its support $\domain$, i.e., $f_0 \equiv \inf_{x\in \domain} f(x) > 0$, we also get a lower bound on the integrated risk of any estimator $\tilde{f}$ of $f$, including $\hat{f}$. Indeed, letting $\widetilde{\domain}_n = \{x\in \R^d : \tilde{f}(x) \geq f_0 / 2\}$, we have
    \begin{equation*}
        {IR}_n(\tilde{f}, f) = \EE\left\{\int_{\R^d} |\tilde{f}(x) - f(x)| \rd x\right\} \geq \frac{f_0}{2} \, \EE\big\{\lambda(\domain \triangle \widetilde{\domain}_n)\big\} \gtrsim n^{-2/(d+1)},
    \end{equation*}
    where $\domain \triangle \widetilde{\domain}_n$ denotes the symmetric difference between $\domain$ and $\widetilde{\domain}_n$, and the last inequality follows from a direct application of Theorem~2 of \cite{Brunel2018}. If the domain $\domain$ was known, we would also have ${IR}_n(\tilde{f}, f) \gtrsim n^{-s/(2s+d)}$.

    Therefore, combining the findings from the last two paragraphs, it is reasonable to conjecture that in the case of unknown compact convex domains, both the minimax rate and the ARC for the $L^1$ integrated risk measure should be $n^{-s/(2s+d)} + n^{-2/(d+1)}$ when assuming some mild regularity conditions on the target density $f$. This problem is left open for future research.

\section{Simulations}\label{sec:simulations}

    In this section, we conduct a concise investigation using simulated data, where we assess the performance of our estimator against the one provided in the \texttt{sparr} package \cite{Davies_et_al2024}; see \citep{DaviesMarshallHazelton2018} for a tutorial. The \texttt{sparr} package is extensively used within the \texttt{R} community for estimating bivariate density functions on complicated domains. It is worth noting that since 2020, based on the data gathered by David Robinson's Shiny app \cite{Robinson2015}, the download rate of the \texttt{sparr} package has consistently been approximately three times higher than that of the other commonly used \texttt{latticeDensity} package \citep{Barry2021}.

    We consider two polynomial sector domains for our simulations, namely $\domain_1$ and $\domain_{2.1}$; recall~\eqref{eq:domain.k}. Notice that the linear sector $\domain_1$ is a simple domain which satisfies Assumption~\ref{ass:1}, while $\domain_{2.1}$ is a complicated domain which is not star-shaped around the point of estimation $t=(0,0)$. On each domain, we consider two different types of density functions, which we describe below.

    The first two density functions, polynomial in nature, are defined by
    \begin{equation}\label{eq:poly.density}
        f_k(x,y) = C_k \big\{(x-0.6)^2 + (y - 0.2)^2\big\} \1_{\domain_k}(x,y), \quad k\in\{1, 2.1\},
    \end{equation}
    where $C_k$ is a positive normalizing constant which depends on $k$. The contour plots of these two densities are depicted in Figure~\ref{fig:dens1}.

    \begin{figure}[!b]
        \includegraphics[width=67mm]{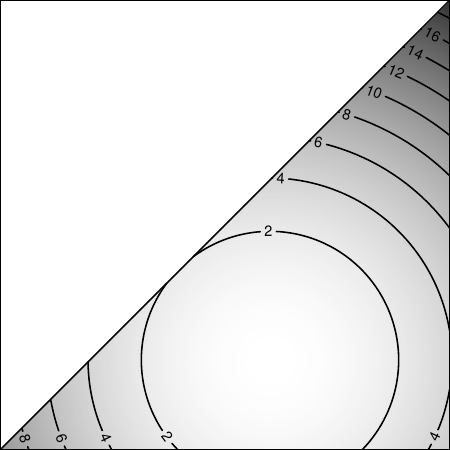}
        \qquad
        \includegraphics[width=67mm]{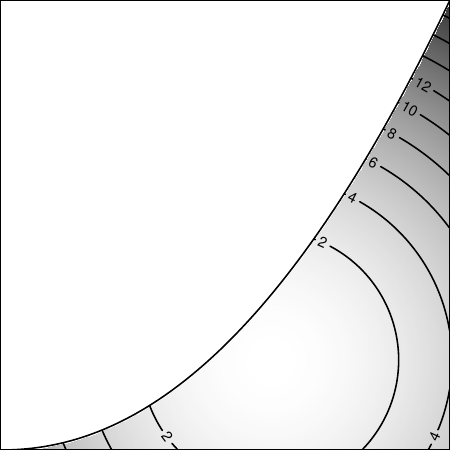}
        \caption{The polynomial density functions $f_1$ (left) and $f_{2.1}$ (right) defined in~\eqref{eq:poly.density}. Darker regions are associated with higher values of $f_k$.}
        \label{fig:dens1}
    \end{figure}

    The next two density functions are mixtures of truncated Gaussian distributions defined by
    \begin{equation}\label{eq:mixture.Gaussian.density}
        \begin{aligned}
            g_k(x,y)
            &= A_k \left[\exp\left\{-\frac{(x-a_k)^2 + (y-b_k)^2}{2(0.4)^2}\right\} + \exp\left\{-\frac{(x-c_k)^2 + (y-d_k)^2}{2(0.15)^2}\right\}\right], \quad k\in\{1, 2.1\},
        \end{aligned}
    \end{equation}
    where $A_k$ is a positive normalizing constant which depends on $k$, and
    \begin{equation*}
        a_k = 1/10, \quad b_k = (1/10)^k/2, \quad c_k = 3/4, \quad \text{and} \quad d_k =  (3/4)^k/2.
    \end{equation*}
    The contour plots of these two densities are depicted in Figure~\ref{fig:dens2}.

    \begin{figure}[ht]
        \includegraphics[width=67mm]{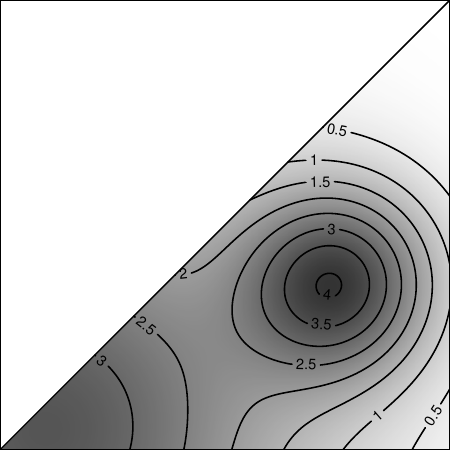}
        \qquad
        \includegraphics[width=67mm]{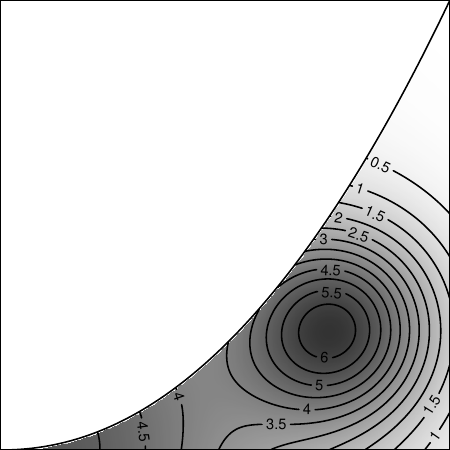}
        \caption{The mixtures of truncated Gaussian density functions $g_1$ (left) and $g_{2.1}$ (right) defined in~\eqref{eq:mixture.Gaussian.density}. Darker regions are associated with higher values of $g_k$.}
        \label{fig:dens2}
    \end{figure}

    The \texttt{sparr} estimator $\smash{\hat{f}_h^{\, \mathrm{SPARR}}}$ depends on a tuning parameter $h$ which plays the role of the bandwidth while our local polynomial estimator $\smash{\hat{f}_{m,h}^{\, \mathrm{LP}}}$ depends on both a bandwidth $h$ and a polynomial degree $m$. Below, we consider the family of polynomial degrees $\mathcal{M} = \{0, 1, 2, 3, 4, 5\}$ and the family of bandwidths $\mathcal{H} = \{0.01 + 0.001\times\ell : \ell = 0,\ldots, 599\}$. For each target density $f\in\{f_1, f_{2.1}, g_1, g_{2.1}\}$ and each sample size $n\in\{200, 500, 1000, 2000\}$, we aim to compare two oracle estimators defined by
    \begin{alignat*}{3}
        &\hat{f}_{m^*,h^*}^{\, \mathrm{LP}}
        && \quad \text{where} \quad
        (m^{*}, h^*)
        && = \mathop{\mathrm{arg\,min}}_{(m,h)\in \mathcal{M}\times \mathcal{H}} \EE\left\{\big|\hat{f}_{m,h}^{\, \mathrm{LP}}(t) - f(t)\big|^2\right\}, \\[-1mm]
        &\hat{f}_{h^{\circ}}^{\, \mathrm{SPARR}}
        && \quad \text{where} \quad
        ~~~~~~~~~~ h^{\circ}
        && = \mathop{\mathrm{arg\,min}}_{h\in \mathcal{H}} \EE\left\{\big|\hat{f}_h^{\, \mathrm{SPARR}}(t)\ - f(t)\big|^2\right\}.
    \end{alignat*}
    To accomplish this, we generate $R = 5000$ replications of the random samples of size $n$, and we compute for each replication $r\in \{1,\ldots,R\}$ the corresponding values of the estimators denoted by $\smash{[\hat{f}_{m,h}^{\, \mathrm{LP}}(t)]_r}$ and $\smash{[\hat{f}_h^{\, \mathrm{SPARR}}(t)]_r}$, respectively. Finally, we obtain the estimated oracles using
    \begin{equation*}
        \begin{aligned}
            (\hat{m}^*, \hat{h}^*)
            &= \mathop{\mathrm{arg\,min}}_{(m,h)\in \mathcal{M}\times \mathcal{H}} \frac{1}{R} \sum_{r=1}^R \big|[\hat{f}_{m,h}^{\, \mathrm{LP}}(t)]_r - f(t)\big|^2, \\[-1mm]
            \hat{h}^{\circ}
            &= \mathop{\mathrm{arg\,min}}_{h\in \mathcal{H}} \frac{1}{R} \sum_{r=1}^R \big|[\hat{f}_h^{\, \mathrm{SPARR}}(t)]_r - f(t)\big|^2.
        \end{aligned}
    \end{equation*}
    The results are presented in Figures~\ref{fig:boxplot.poly}~and~\ref{fig:boxplot.norm} below.

    \begin{figure}[!ht]
        \includegraphics[width=70mm]{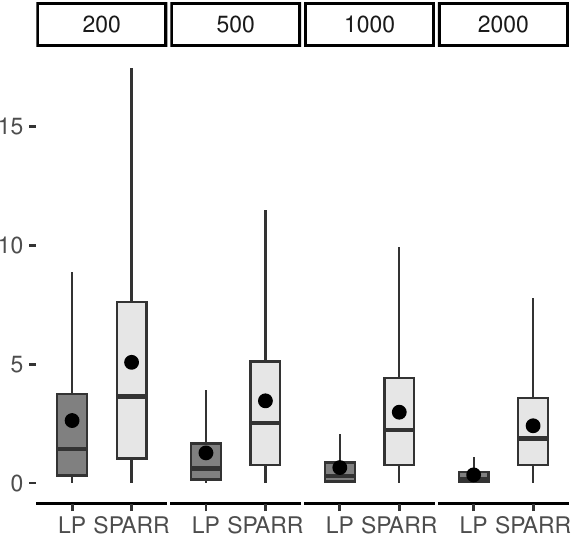}
        \includegraphics[width=70mm]{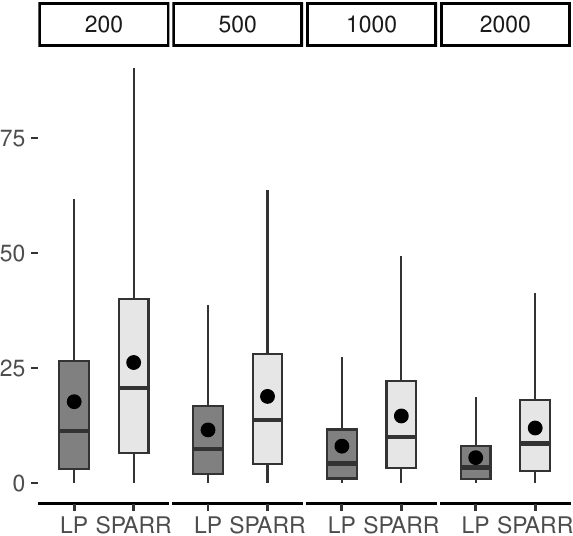}
        \caption{Boxplots of the $R$ replications of the estimated oracles. The target densities are $f_1$ (left) and $f_{2.1}$ (right).\vspace{-3mm}}
        \label{fig:boxplot.poly}
    \end{figure}

    \begin{figure}[!ht]
        \includegraphics[width=70mm]{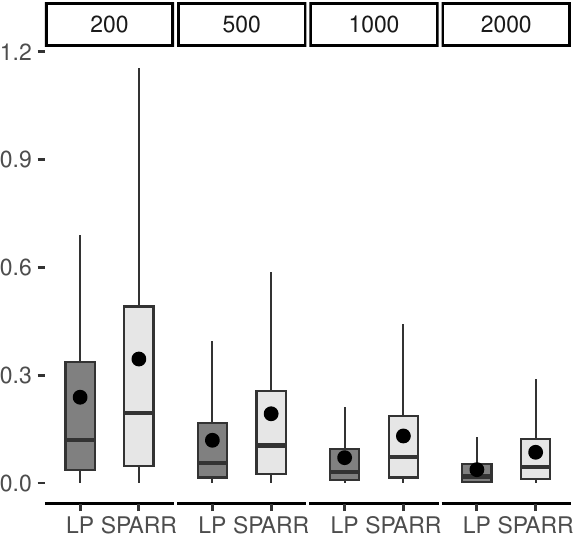}
        \includegraphics[width=70mm]{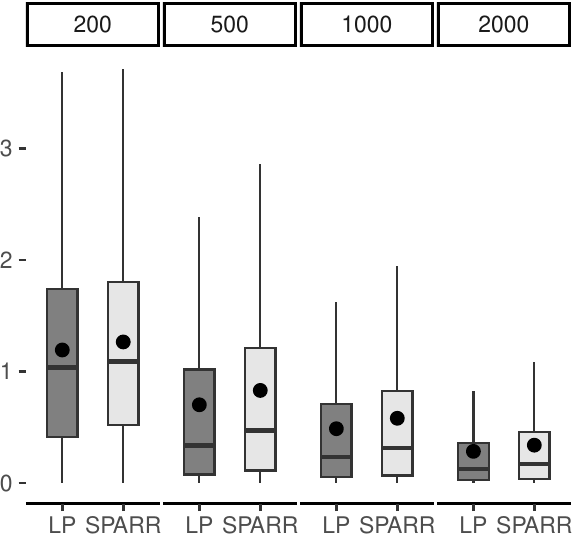}
        \caption{Boxplots of the $R$ replications of the estimated oracles. The target densities are $g_1$ (left) and $g_{2.1}$ (right).\vspace{-3mm}}
        \label{fig:boxplot.norm}
    \end{figure}

    Given a random sample of size $n\in \{200, 500, 1000, 2000\}$ for each replication, the boxplots of the $R$ estimated oracles using our local polynomial method, $\raisebox{0.2ex}{$\smash{[\hat{f}_{\hat{m}^*, \hat{h}^*}^{\, \mathrm{LP}}(t)]_{r=1}^R}$}$, are displayed in the left columns (LP) and the boxplots of the $R$ estimated oracles using the \texttt{sparr} package, $\smash{[\hat{f}_{\hat{h}^{\circ}}^{\, \mathrm{SPARR}}(t)]_{r=1}^R}$, are displayed in the right columns (SPARR). Figure~\ref{fig:boxplot.poly} corresponds with the polynomial-type densities $f_k$ defined in~\eqref{eq:poly.density} for $k=1$ (left) and $k=2.1$ (right), while Figure~\ref{fig:boxplot.norm} corresponds with the mixtures of truncated Gaussian distributions $g_k$ defined in \eqref{eq:mixture.Gaussian.density} for $k=1$ (left) and $k=2.1$ (right).

    Together, Figures~\ref{fig:boxplot.poly}~and~\ref{fig:boxplot.norm} reveal that, for both the simple domain $\domain_1$ and the complicated domain $\domain_{2.1}$, our oracle estimates almost always outperform those of the \texttt{sparr} package in mean (black dot inside each box), in median (horizontal line inside each box), and in interquartile range (height of each box), under both the polynomial densities $f_k$ and the mixtures of truncated Gaussian densities $g_k$, and across all sample sizes $n$.

    The only exception is observed under $g_{2.1}$ for $n = 200$, where the means, medians and interquartile ranges are close to being equal, still giving a slight edge to our method based on the means and quartiles.

    Overall, out of the 16 cases considered, our oracle estimates outperform those of the \texttt{sparr} package in every case. The best polynomial degree selected was always either $1$ or $2$, which justifies our restricted range of polynomial degrees $\mathcal{M} = \{0, 1, 2, 3, 4, 5\}$. It is typical of low degree polynomials to be optimal for local polynomial kernel estimators; see, e.g., p.~126 of \cite{MR1319818}.
    
    \setcounter{section}{5}
\section{Preliminary technical lemmas}\label{sec:technical.lemmas}

    \begin{lemma}\label{lem:positive.definite}
        Let $\gamma = (m,h)\in \Gamma$ be given.
        The Gram matrix $\mathcal{B}_{\gamma}$ defined in~\eqref{eq:matrix.B.gamma} is symmetric positive definite.
        In particular, its smallest eigenvalue $\lambda_{\gamma}$, defined in~\eqref{eq:lambda.gamma}, is positive.
    \end{lemma}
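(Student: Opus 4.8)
The plan is to read everything off the integral representation~\eqref{eq:matrix.B.gamma}. Symmetry is immediate, since for each $u$ the outer product $\Phi_{\gamma}(u)\Phi_{\gamma}^{\top}(u)$ is a symmetric matrix and entrywise integration preserves symmetry. For the quadratic form, fix $v\in\R^{D_m}$ and set $P_v(u) = v^{\top}\Phi_{\gamma}(u) = \sum_{|\alpha|\le m} v_{\alpha}\,\varphi_{\alpha}(u/h)$, a polynomial in $u$ of degree at most $m$. Then
\begin{equation*}
    v^{\top}\mathcal{B}_{\gamma}v = \int_{\R^d} \big(v^{\top}\Phi_{\gamma}(u)\big)^{2} w_h(u)\,\rd u = \int_{\R^d} P_v(u)^{2}\, w_h(u)\,\rd u \ge 0,
\end{equation*}
because $w_h\ge 0$ by~\eqref{eq:w.h}. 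Hence $\mathcal{B}_{\gamma}$ is symmetric positive semidefinite, and the only thing left is to show that its kernel is trivial.

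Suppose $v^{\top}\mathcal{B}_{\gamma}v = 0$. Since the integrand $P_v^{2}\,w_h$ is nonnegative, it must vanish Lebesgue-almost everywhere, so $P_v = 0$ a.e.\ on the set $\{u\in\R^d : w_h(u) > 0\}$. The one step that requires care is to check that this set contains a nonempty open subset of $\R^d$. Write $\domain = \overline{O}$ with $O$ a nonempty open set (as in the standing assumptions), and recall $t\in\domain$; then the open cube $t + (-h,h)^{d}$ is a neighborhood of $t$, hence meets $O$, so $\Omega := (O - t)\cap(-h,h)^{d}$ is a nonempty open set. For every $u\in\Omega$ we have $h^{-1}u\in(-1,1)^{d}\subseteq\Delta$ and $t+u\in O\subseteq\domain$, whence $w_h(u) = h^{-d}\,K(h^{-1}u)\,\1_{\domain}(t+u) = h^{-d} > 0$. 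Consequently $P_v$ vanishes a.e.\ on the nonempty open set $\Omega$; being a polynomial, it must then vanish identically on $\R^d$. After the invertible substitution $u = h w$, this reads $\sum_{|\alpha|\le m} v_{\alpha}\varphi_{\alpha}(w) = 0$ for all $w\in\R^d$, and the linear independence of the monomials $\{\varphi_{\alpha} : |\alpha|\le m\}$ forces $v_{\alpha} = 0$ for every $\alpha$, i.e.\ $v = 0$. Thus $\mathcal{B}_{\gamma}$ is positive definite.

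Finally, $\lambda_{\gamma}$ in~\eqref{eq:lambda.gamma} is the minimum of the continuous map $v\mapsto v^{\top}\mathcal{B}_{\gamma}v$ over the compact unit sphere $\{v : v^{\top}v = 1\}$, which is attained at some unit vector $v_{0}$; positive definiteness then gives $\lambda_{\gamma} = v_{0}^{\top}\mathcal{B}_{\gamma}v_{0} > 0$. The main (and essentially only) obstacle is the geometric observation that $w_h$ is strictly positive on a nonempty open set, which is exactly where the hypotheses that $\domain$ is the closure of a nonempty open set and that $\voisin(\rho)$ is a genuine neighborhood of the origin in $\domain$ enter; the remainder is routine linear algebra together with the fact that a polynomial that vanishes on a set of positive Lebesgue measure is the zero polynomial.
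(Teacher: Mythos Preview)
Your proof is correct and follows essentially the same route as the paper's: write the quadratic form as $\int P_v(u)^2 w_h(u)\,\rd u$, infer that $P_v$ vanishes a.e.\ on the support of $w_h$, and use that a polynomial vanishing on a set of positive measure is identically zero. The only difference is that you make explicit the geometric step---showing $\{w_h>0\}$ contains a nonempty open set via $\domain=\overline{O}$ and $t\in\overline{O}$---whereas the paper simply asserts $\mathrm{Leb}\{\voisin(h)\}>0$ without further comment; your version is slightly more careful here, though the $\voisin(\rho)$ neighborhood hypothesis you mention at the end is not actually used in your argument.
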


    \begin{proof}[Proof of Lemma~\ref{lem:positive.definite}]
        For all $v\in \R^{D_m}$, the fact that $w_h\geq 0$ implies
        \begin{equation}\label{eq:lem:positive.definite}
            \begin{aligned}
                v^{\top} \mathcal{B}_{\gamma}  v
                &=\int_{\voisin(h)} v^{\top} \Phi_{\gamma}(u) \Phi_{\gamma}^{\top}(u) v \, w_h(u) \rd u
                =\int_{\voisin(h)} \left\{v^{\top} \Phi_{\gamma}(u)\right\}^2 w_h(u) \rd u \geq 0.
            \end{aligned}
        \end{equation}
        Moreover, if some $v\in \R^{D_m}$ satisfies $v^{\top} \mathcal{B}_{\gamma} v = 0$, then $v^{\top} \Phi_{\gamma}(u) = 0$ for almost all $u\in \voisin(h)$. This means that the polynomial $v^{\top} \Phi_{\gamma}(u)$, with coefficients $v_i$, is zero for almost all $u\in \voisin(h) = \operatorname{Supp}(w_h)$. Since $\mathrm{Leb}\left\{\voisin(h)\right\} > 0$, we conclude that $v_i = 0$ for all $i\in \{1,\ldots,D_m\}$.
    \end{proof}

    \begin{lemma}\label{lem:usual.behavior}
        Suppose that Assumption~\ref*{ass:1} holds.
        First, we have
        \begin{equation*}
            \mathrm{Leb}(\Delta_0) \leq W_h \leq \mathrm{Leb}(\Delta) = 2^d, \quad \text{for all } h\in (0,\rho].
        \end{equation*}
        Second, for all $m\in \N_0$, there exists $\lambda_{\star}(m)> 0$, which also depends on $\rho$, such that
        \begin{equation*}
           \inf_{h\in (0,\rho]} \lambda_{\gamma} \geq \lambda_{\star}(m), \quad \text{where } \gamma = (m,h).
        \end{equation*}
    \end{lemma}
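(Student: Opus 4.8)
The plan is to prove the two assertions separately, as they are of a quite different nature. For the bound on $W_h$, recall that $W_h = \int_{\R^d} w_h(u) \rd u = h^{-d}\int_{\R^d} K(h^{-1}u)\1_{\domain}(t+u)\rd u = h^{-d}\,\mathrm{Leb}\{\voisin(h)\}$, using the substitution $u = hv$ and the fact that $\voisin(h) = (\domain - t)\cap(h\Delta)$ is exactly the support of $w_h$. Since $\voisin(h) \subseteq h\Delta$, we immediately get $W_h \leq h^{-d}\mathrm{Leb}(h\Delta) = \mathrm{Leb}(\Delta) = 2^d$. For the lower bound, Assumption~\ref{ass:1} gives $h\Delta_0 \subseteq \voisin(h)$ for all $h\in(0,\rho]$, so $\mathrm{Leb}\{\voisin(h)\} \geq \mathrm{Leb}(h\Delta_0) = h^d\,\mathrm{Leb}(\Delta_0)$, hence $W_h \geq \mathrm{Leb}(\Delta_0)$, which is positive since $\Delta_0$ is open and nonempty.

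For the lower bound on the smallest eigenvalue $\lambda_{\gamma}$, the key idea is to exploit a scaling/change-of-variables argument to reduce the infimum over $h\in(0,\rho]$ to a single fixed problem, then use Assumption~\ref{ass:1} and a compactness argument. From \eqref{eq:lem:positive.definite} in the proof of Lemma~\ref{lem:positive.definite}, for any unit vector $v\in\R^{D_m}$, $v^\top\mathcal{B}_\gamma v = \int_{\voisin(h)}\{v^\top\Phi_\gamma(u)\}^2 w_h(u)\rd u$. Substituting $u = hx$ and using $w_h(hx) = h^{-d}\1_\Delta(x)\1_{\domain}(t+hx)$ together with $\Phi_\gamma(hx) = \{\varphi_\alpha(x)\}_{|\alpha|\le m} =: \Phi(x)$ (the unscaled monomial vector, independent of $h$), we obtain $v^\top\mathcal{B}_\gamma v = \int_{h^{-1}\voisin(h)}\{v^\top\Phi(x)\}^2\rd x$, where $h^{-1}\voisin(h) = h^{-1}(\domain - t)\cap\Delta$. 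By Assumption~\ref{ass:1}, this domain of integration contains the fixed open set $\Delta_0$ for every $h\in(0,\rho]$, so $v^\top\mathcal{B}_\gamma v \geq \int_{\Delta_0}\{v^\top\Phi(x)\}^2\rd x$. It therefore suffices to set $\lambda_\star(m) = \min_{v^\top v = 1}\int_{\Delta_0}\{v^\top\Phi(x)\}^2\rd x$ and check that this quantity is strictly positive.

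To see that $\lambda_\star(m) > 0$, I would argue exactly as in Lemma~\ref{lem:positive.definite}: the map $v \mapsto \int_{\Delta_0}\{v^\top\Phi(x)\}^2\rd x$ is continuous on the compact unit sphere of $\R^{D_m}$, so it attains its minimum at some $v_0$ with $v_0^\top v_0 = 1$; if this minimum were $0$, then $v_0^\top\Phi(x) = 0$ for almost every $x\in\Delta_0$, but $\Delta_0$ is a nonempty open set with $\mathrm{Leb}(\Delta_0) > 0$, forcing the polynomial $v_0^\top\Phi$ to vanish identically and hence $v_0 = 0$, contradicting $v_0^\top v_0 = 1$. Note that $\lambda_\star(m)$ depends on $\Delta_0$ (which is fixed once $\domain$, $t$, $\rho$ are fixed) and on $\rho$ through Assumption~\ref{ass:1}, but not on $h$, which yields $\inf_{h\in(0,\rho]}\lambda_\gamma \geq \lambda_\star(m)$ as claimed. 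The only mild subtlety — and the step I would be most careful about — is the bookkeeping of the change of variables, namely verifying that $h^{-1}\voisin(h) = (h^{-1}(\domain - t))\cap\Delta \supseteq \Delta_0$ holds for the full range $h\in(0,\rho]$; this is precisely what Assumption~\ref{ass:1} guarantees after dividing the inclusion $h\Delta_0\subseteq\voisin(h)$ by $h$.
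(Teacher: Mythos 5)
Your proof is correct and follows essentially the same route as the paper: the same inclusion $h\Delta_0\subseteq\voisin(h)\subseteq h\Delta$ for the bounds on $W_h$, and the same rescaling-to-a-fixed-Gram-matrix argument (the paper uses $\tilde u=(\rho/h)u$ landing on $\rho\Delta_0$ with $\Phi_{(m,\rho)}$, while you use $x=u/h$ landing on $\Delta_0$ with the unscaled monomials, which is the same reduction up to a trivial dilation), followed by the identical compactness-plus-positive-definiteness argument from Lemma~\ref{lem:positive.definite}.
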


    \begin{proof}[Proof of Lemma~\ref{lem:usual.behavior}]
        Since $h \Delta_0\subseteq \voisin(h)\subseteq h \Delta$ for all $h\in (0,\rho]$ by Assumption~\ref*{ass:1}, we have
        \begin{equation*}
           \int_{h\Delta_0} w_h(u) \rd u \leq\int_{\voisin(h)} w_h(u) \rd u \leq\int_{h\Delta} w_h(u) \rd u.
        \end{equation*}
        Furthermore, $\operatorname{Supp}(w_h) = \voisin(h)$, so the middle integral is $W_h$ as defined in~\eqref{eq:L.n.W.h}.
        With a simple change of variable and the fact that $K$ is identically equal to $1$ on $\Delta$ according to~\eqref{eq:K}, we get
        \begin{equation*}
            \mathrm{Leb}(\Delta_0) \leq W_h \leq \mathrm{Leb}(\Delta) = 2^d, \quad \text{for all } h\in (0,\rho],
        \end{equation*}
        which proves the first claim of the lemma.

        Next, using the fact that $\voisin(h)\supseteq h \Delta_0$ for all $h\in (0,\rho]$ by Assumption~\ref*{ass:1}, and applying the linear change of variable $\tilde{u} = (\rho / h) u$, we have, for any $v\in \mathbb{R}^{D_m}$,
        \begin{equation*}
            \begin{aligned}
                v^{\top} \mathcal{B}_{\gamma} v
                &= \int_{\voisin(h)} \left\{v^{\top} \Phi_{\gamma}(u)\right\}^2 w_h(u) \rd u
                \geq h^{-d}\int_{h\Delta_0} \left\{v^{\top} \Phi_{\gamma}(u)\right\}^2 \rd u \\
                &= \rho^{-d}\int_{\rho\Delta_0} \left\{v^{\top} \Phi_{(m,\rho)}(\tilde{u})\right\}^2 \rd \tilde{u}
                = \rho^{-d} v^{\top} \left\{\int_{\rho\Delta_0} \Phi_{(m,\rho)}(\tilde{u}) \Phi_{(m,\rho)}^{\top}(\tilde{u}) \rd \tilde{u}\right\} \, v.
            \end{aligned}
        \end{equation*}
        The same line of reasoning as in the proof of Lemma~\ref{lem:positive.definite} now shows that the integral on the right-hand side is a symmetric positive definite matrix. If $\mu(m,\rho) > 0$ denotes its smallest eigenvalue, then the last equation yields
        \begin{equation*}
            \lambda_{\gamma} \geq \rho^{-d}\mu(m,\rho) \eqqcolon \lambda_{\star}(m),
        \end{equation*}
        where the dependence on $\rho$ is omitted.
        This proves the second claim of the lemma.
    \end{proof}

    \begin{lemma}\label{lem:infty.norm.f}
        Let $(s,L)\in (0,\infty)^2$ be given. There exists a positive real constant $\mathfrak{F}_{s,L}$ (that also depends possibly on $\domain$, $t$ and $\rho$) such that
        \begin{equation*}
            \sup_{f\in \Sigma(s,L)} \finfty \leq \mathfrak{F}_{s,L},
        \end{equation*}
        where recall from~\eqref{eq:f.bounded} that $\finfty = \sup_{u\in \voisin(\rho)} \lvert f(t+u) \rvert$.
    \end{lemma}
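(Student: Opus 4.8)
The plan is to use the defining property of the class $\Sigma(s,L)$ directly. Fix $f \in \Sigma(s,L)$. By Definition~\ref{def:Sigma.s.L}, there exists a polynomial $q \in \mathcal{P}_{\llfloor s \rrfloor}$ such that $|f(t+u) - q(u)| \leq L\|u\|_\infty^s$ for all $u \in \voisin(\rho)$. Hence, for every $u \in \voisin(\rho)$,
\begin{equation*}
    |f(t+u)| \leq |q(u)| + L\|u\|_\infty^s \leq \sup_{u\in\voisin(\rho)}|q(u)| + L\rho^s,
\end{equation*}
since $\|u\|_\infty \leq \rho$ on $\voisin(\rho)$. So the whole task reduces to bounding $\sup_{u\in\voisin(\rho)}|q(u)|$ by a constant that does not depend on $f$ (equivalently, on $q$).

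The key step is to control the coefficients of $q$ uniformly. Since $q(0) = f(t)$ is not yet known to be bounded, I cannot argue pointwise; instead I would exploit that $q$ is a \emph{low-degree} polynomial constrained to stay within $L\rho^s$ of the \emph{integrable} function $f(t+\cdot)$ on a set of positive Lebesgue measure. Concretely, write $q(u) = \sum_{|\alpha|\le \llfloor s \rrfloor} b_\alpha \varphi_\alpha(u)$ and consider the $L^2(\voisin(\rho))$ inner product. The Gram matrix $G = \big(\int_{\voisin(\rho)} \varphi_\alpha \varphi_\beta \,\rd u\big)_{\alpha,\beta}$ is symmetric positive definite by the same argument as in Lemma~\ref{lem:positive.definite} (using $\mathrm{Leb}(\voisin(\rho)) > 0$ together with the fact that a nonzero polynomial vanishes only on a null set). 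Therefore $\|b\|^2 \leq \lambda_{\min}(G)^{-1} \int_{\voisin(\rho)} q(u)^2 \,\rd u$. Now bound the right-hand integral: using $|q(u)| \le |f(t+u)| + L\rho^s$ and $\int_{\voisin(\rho)} f(t+u)^2 \,\rd u \le \|f\|_{\voisin(\rho)} \int_{\voisin(\rho)} f(t+u)\,\rd u \le \|f\|_{\voisin(\rho)} \cdot 1$, this is circular — it reintroduces $\|f\|_{\voisin(\rho)}$. To break the circularity I would instead use only $\int_{\voisin(\rho)} |f(t+u)|\,\rd u \le 1$ (a genuine constant, since $f$ is a density): pick a fixed sub-box $Q \subseteq \voisin(\rho)$ with $\mathrm{Leb}(Q) > 0$ (which exists because $\voisin(\rho)$ is a neighborhood of the origin), and note that on $Q$ the polynomial $q$ satisfies $|q(u)| \le |f(t+u)| + L\rho^s$, so $\int_Q |q(u)|\,\rd u \le \int_Q |f(t+u)|\,\rd u + L\rho^s\,\mathrm{Leb}(Q) \le 1 + L\rho^s\,\mathrm{Leb}(Q) =: M$, a constant depending only on $s, L, \rho, Q$. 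The linear functional $q \mapsto \int_Q |q|$ is a norm on the finite-dimensional space $\mathcal{P}_{\llfloor s \rrfloor}$ (it vanishes only on the zero polynomial, since a nonzero polynomial is nonzero a.e.), hence equivalent to the coefficient norm $\|b\|$; thus $\|b\| \le c_{s,\rho,Q}\, M$ with $c_{s,\rho,Q}$ depending only on the stated quantities. Finally $\sup_{u\in\voisin(\rho)}|q(u)| \le \sup_{\|u\|_\infty\le\rho}\sum_{|\alpha|\le\llfloor s\rrfloor}|b_\alpha|\,\rho^{|\alpha|} \le C_{s,\rho}\|b\|$, which is then bounded by a constant independent of $f$.

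Collecting the pieces, set
\begin{equation*}
    \mathfrak{F}_{s,L} = C_{s,\rho}\, c_{s,\rho,Q}\, \big(1 + L\rho^s\,\mathrm{Leb}(Q)\big) + L\rho^s,
\end{equation*}
which depends only on $s$, $L$, and (through $\rho$, $Q$) on $\domain$ and $t$; this dominates $\finfty$ for every $f \in \Sigma(s,L)$, proving the claim. The main obstacle is exactly the circularity noted above: one must bound the polynomial part of $f$ using only an \emph{a priori} quantity, and the natural such quantity is the total mass $\int |f| = 1$, localized to a fixed sub-box so that norm-equivalence on the finite-dimensional polynomial space can be invoked. Everything else — the $L\rho^s$ remainder bound and the passage from $L^1$ control to sup control on a finite-dimensional space — is routine.
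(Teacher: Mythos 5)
Your proposal is correct and follows essentially the same route as the paper: reduce to bounding $\sup_{u\in\voisin(\rho)}|q(u)|$, control the polynomial's $L^1$ norm on a positive-measure subset of $\voisin(\rho)$ via $\int f = 1$ plus the remainder bound $L\rho^s$, and invoke norm equivalence on the finite-dimensional space $\mathcal{P}_{\llfloor s\rrfloor}$ to pass to the coefficients and then to the sup norm. The only cosmetic difference is that the paper runs the equivalence through the weighted norms $L^1(w_\rho)$ and $L^2(w_\rho)$ and the smallest eigenvalue $\lambda_{\gamma_\rho}$ of the Gram matrix $\mathcal{B}_{\gamma_\rho}$, whereas you use the unweighted $L^1$ norm on a fixed sub-box and equate it directly with the coefficient norm.
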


    \begin{proof}[Proof of Lemma~\ref{lem:infty.norm.f}]
        We adapt the proof found on p.~7 of \cite{Tsybakov2004frenchsupp} to our framework.
        Let $f\in \Sigma(s,L)$ and let $\gamma_{\rho} = (\llfloor s \rrfloor, \rho)\in \Gamma$.
        By definition of $\Sigma(s,L)$, there exists a polynomial $q\in \mathcal{P}_{\llfloor s \rrfloor}$ such that, for any $u\in \voisin(\rho)$,
        \begin{equation}\label{eq:lem:infty-norm-f.first}
            \left|f(t+u) - q(u)\right| \leq L\|u\|_{\infty}^s.
        \end{equation}
        Let us write
        \begin{equation*}
            q(u) = Q_{\llfloor s \rrfloor}^{\top} \Phi_{\gamma_{\rho}}(u),
        \end{equation*}
        where $\smash{Q_{\llfloor s \rrfloor}}$ denotes the $\smash{D_{\llfloor s \rrfloor} \times 1}$ coordinate vector of $q$ in the basis $\smash{\Phi_{\gamma_{\rho}}}$.
        Then, using the fact that $|\varphi_{\alpha}(u/\rho)| \leq 1$ for all $u\in \voisin(\rho)$ and all $\alpha\in \N_0^d$, we have
        \begin{equation}\label{eq:proof1}
            |q(u)| \leq \max_{|\alpha| \leq \llfloor s \rrfloor} |\varphi_{\alpha}(u/\rho)| \times \|Q_{\llfloor s \rrfloor}\|_1 \leq \sqrt{D_{\llfloor s \rrfloor}} \, \|Q_{\llfloor s \rrfloor}\|_2.
        \end{equation}
        Since $\mathcal{P}_{\llfloor s \rrfloor}$ is a vector space of finite dimension, the $L^1(w_{\rho})$ and $L^2(w_{\rho})$ norms are equivalent on that space. Thus, there exists some constant $c_{s,\rho}>0$ such that
        \begin{equation}\label{eq:proof2}
            \begin{aligned}
                \int_{\voisin(\rho)} |q(u)| w_{\rho}(u) \rd u
                &\geq c_{s,\rho} \sqrt{\int_{\voisin(\rho)} q^2(u) w_{\rho}(u) \rd u} \\
                &= c_{s,\rho} \sqrt{Q_{\llfloor s \rrfloor}^{\top} \mathcal{B}_{\gamma_{\rho}}  Q_{\llfloor s \rrfloor}} \geq c_{s,\rho} \sqrt{\lambda_{\gamma_{\rho}}} \, \|Q_{\llfloor s \rrfloor}\|_2,
            \end{aligned}
        \end{equation}
        where the equality and the last inequality are a consequence of~\eqref{eq:lem:positive.definite} and~\eqref{eq:lambda.gamma}, respectively.
        Combining~\eqref{eq:proof1} and~\eqref{eq:proof2}, followed by an application of the bound in~\eqref{eq:lem:infty-norm-f.first}, we obtain
        \begin{equation*}
            \begin{aligned}
                |q(u)|
                &\leq \frac{\sqrt{D_{\llfloor s \rrfloor}}}{c_{s,\rho} \sqrt{\lambda_{\gamma_{\rho}}}} \int_{\voisin(\rho)} |q(u)| w_{\rho}(u) \rd u \\
                &\leq \frac{\sqrt{D_{\llfloor s \rrfloor}}}{c_{s,\rho} \sqrt{\lambda_{\gamma_{\rho}}}} \left\{\int_{\voisin(\rho)} |f(t + u)| w_{\rho}(u) \rd u +\int_{\voisin(\rho)} L \|u\|_{\infty}^s w_{\rho}(u) \rd u\right\}.
            \end{aligned}
        \end{equation*}
        It remains to observe that $0 \leq w_{\rho} \leq \rho^{-d}$ and $f$ is a density function to deduce
        \begin{equation}\label{eq:proof10}
            \finfty \leq \sup_{u\in \voisin(\rho)}|q(u)| + L \rho^s \leq \frac{\sqrt{D_{\llfloor s \rrfloor}}}{c_{s,\rho} \sqrt{\lambda_{\gamma_{\rho}}}} \left(1 + L \rho^s W_{\!\rho}\right) + L \rho^s.
        \end{equation}
        The conclusion follows since $\lambda_{\gamma_{\rho}} > 0$ by Lemma~\ref{lem:positive.definite}.
    \end{proof}

    \begin{lemma}[Concentration of $\hat{f}_{\gamma}(t)$ around its mean]\label{lem:Bernstein.1}
        Let $\gamma = (m,h)\in \Gamma$ be given.
        Recall the definitions of $c_{\gamma}$ and $v_{\gamma}$ from~\eqref{eq:c.gamma} and~\eqref{eq:v.gamma}.
        Assuming that $\finfty < \infty$, we have, for all $x > 0$,
        \begin{equation*}
            \PP\left[\left|\hat{f}_{\gamma}(t) - \EE\left\{\hat{f}_{\gamma}(t)\right\}\right| > r_{\gamma}(v_{\gamma}, x)\right] \leq 2\exp(-x),
        \end{equation*}
        where $r_{\gamma}(v_{\gamma}, x) = \sqrt{2 v_{\gamma} x} + c_{\gamma} x$.
    \end{lemma}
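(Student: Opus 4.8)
The plan is to recognize $\hat{f}_{\gamma}(t)$ as an empirical mean of i.i.d.\ bounded random variables and apply the classical Bernstein inequality. Writing $\xi_i = H_{\gamma}^{\top}(0) H_{\gamma}(X_i - t) w_h(X_i - t)$, formula~\eqref{eq:hat.f.gamma} reads $\hat{f}_{\gamma}(t) = n^{-1}\sum_{i=1}^n \xi_i$ with $\xi_1,\ldots,\xi_n$ i.i.d., so that $\hat{f}_{\gamma}(t) - \EE\{\hat{f}_{\gamma}(t)\} = n^{-1}\sum_{i=1}^n(\xi_i - \EE\xi_i)$ is a normalized sum of i.i.d.\ centered variables. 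The two inputs required for Bernstein's inequality are an almost-sure bound on $|\xi_i|$ and a bound on its second moment; the latter is, by the very definition~\eqref{eq:v.gamma} of $v_{\gamma}$, exactly $\EE\xi_1^2 = n v_{\gamma}$, so that in particular $\Var(\xi_1) \le n v_{\gamma}$.

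For the almost-sure bound, I would first note that $\operatorname{Supp}(w_h) = \voisin(h)$ and $0 \le w_h \le h^{-d}$. Since $\varphi_{\alpha}(0) = \1_{\{\alpha = 0\}}$, the vector $\Phi_{\gamma}(0)$ is the first canonical basis vector of $\R^{D_m}$, and since $\mathcal{B}_{\gamma}^{-1/2}$ has operator norm $\lambda_{\gamma}^{-1/2}$ (as $\mathcal{B}_{\gamma}$ has smallest eigenvalue $\lambda_{\gamma}$, recall~\eqref{eq:lambda.gamma}), we get $\|H_{\gamma}(0)\|_2 = \|\mathcal{B}_{\gamma}^{-1/2}\Phi_{\gamma}(0)\|_2 \le \lambda_{\gamma}^{-1/2}$ from~\eqref{eq:orthogonalization}; similarly, for every $u \in \voisin(h)$ one has $|\varphi_{\alpha}(u/h)| \le 1$ for all $|\alpha| \le m$, whence $\|\Phi_{\gamma}(u)\|_2 \le \sqrt{D_m}$ and $\|H_{\gamma}(u)\|_2 \le \sqrt{D_m}\,\lambda_{\gamma}^{-1/2}$. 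The Cauchy--Schwarz inequality in $\R^{D_m}$ then gives, for $u \in \voisin(h)$,
\begin{equation*}
    \big|H_{\gamma}^{\top}(0) H_{\gamma}(u)\big|\, w_h(u) \le \|H_{\gamma}(0)\|_2 \, \|H_{\gamma}(u)\|_2 \, h^{-d} \le \frac{\sqrt{D_m}}{\lambda_{\gamma} h^d} = n c_{\gamma},
\end{equation*}
so $|\xi_i| \le n c_{\gamma}$ almost surely, and hence $|\xi_i - \EE\xi_i| \le 2 n c_{\gamma}$ almost surely.

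With these ingredients I would apply Bernstein's inequality to $\sum_{i=1}^n(\xi_i - \EE\xi_i) = n\big(\hat{f}_{\gamma}(t) - \EE\{\hat{f}_{\gamma}(t)\}\big)$ in its inverted form: using $|\xi_i - \EE\xi_i| \le 2 n c_{\gamma}$ and $\sum_{i=1}^n \Var(\xi_i) \le n^2 v_{\gamma}$, it yields, for every $x > 0$,
\begin{equation*}
    \PP\!\left[\Big|\hat{f}_{\gamma}(t) - \EE\{\hat{f}_{\gamma}(t)\}\Big| \ge \sqrt{2 v_{\gamma} x} + \tfrac{2}{3}\, c_{\gamma}\, x\right] \le 2\exp(-x).
\end{equation*}
Since $\tfrac{2}{3} c_{\gamma} x \le c_{\gamma} x$, the threshold above is at most $r_{\gamma}(v_{\gamma},x) = \sqrt{2 v_{\gamma} x} + c_{\gamma} x$, so the event $\{|\hat{f}_{\gamma}(t) - \EE\{\hat{f}_{\gamma}(t)\}| > r_{\gamma}(v_{\gamma},x)\}$ is contained in the one displayed, and the lemma follows by monotonicity of $\PP$. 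The only mildly delicate point is the constant bookkeeping: the crude estimate $|\xi_i - \EE\xi_i| \le 2\|\xi_i\|_{\infty}$ costs a factor $2$, but this is exactly absorbed by the $1/3$ in the sub-exponential term of Bernstein's inequality, and one must use the sharp ``$\sqrt{2vx}+cx$'' inversion (not the cruder one producing $2cx$); everything else is routine linear algebra together with the definition of $v_{\gamma}$.
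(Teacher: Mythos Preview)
Your proof is correct and follows essentially the same route as the paper: write $\hat{f}_{\gamma}(t)$ as an average of i.i.d.\ summands, bound each summand by $n c_{\gamma}$ via the same linear-algebra estimate $\|H_{\gamma}(0)\|_2\|H_{\gamma}(u)\|_2 \le \sqrt{D_m}/\lambda_{\gamma}$, identify the second moment with $v_{\gamma}$, and apply Bernstein's inequality. The only cosmetic difference is in the constant bookkeeping: the paper applies the moment form of Bernstein (Theorem~2.10 in Boucheron--Lugosi--Massart) directly to the uncentered $Y_i = \xi_i/n$ with $|Y_i|\le c_{\gamma}$, which yields the threshold $\sqrt{2v_{\gamma}x}+c_{\gamma}x$ immediately, whereas you center first (paying a factor $2$ in the sup bound) and then invoke the sharper sub-gamma constant $b/3$ to recover $\tfrac{2}{3}c_{\gamma}x\le c_{\gamma}x$; both are valid and land on the stated lemma.
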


    \begin{proof}[Proof of Lemma~\ref{lem:Bernstein.1}]
        By the definition of $\hat{f}_{\gamma}(t)$ in~\eqref{eq:hat.f.gamma}, observe that
        \begin{equation}\label{eq:hat.f.Y.i.decomp}
            \hat{f}_{\gamma}(t) - \EE\left\{\hat{f}_{\gamma}(t)\right\} = \sum_{i=1}^n \left[Y_i(\gamma) - \EE\{Y_i(\gamma)\}\right],
        \end{equation}
        where
        \begin{equation*}
            Y_i(\gamma) = \frac{1}{n} \left\{H_{\gamma}^{\top}(0) H_{\gamma}(X_i-t) w_h(X_i-t)\right\}.
        \end{equation*}
        This readily implies that $\sum_{i=1}^n \EE\left\{Y_i^2(\gamma)\right\} = v_{\gamma}$. Furthermore, by H\"older's inequality, the submultiplicativity property of the spectral norm and the fact that $\|\Phi_{\gamma}(0)\Phi_{\gamma}^{\top}(0)\|_2 = 1$, note that, for all $u\in \voisin(h)$,
        \begin{equation}\label{eq:H.H.2}
            \begin{aligned}
                \big\{H_{\gamma}^{\top}(0) H_{\gamma}(u)\big\}^2
                &= \big\{\Phi_{\gamma}^{\top}(0)\mathcal{B}_{\gamma}^{-1} \Phi_{\gamma}(u)\big\}^2 \\
                &=\Phi_{\gamma}^{\top}(u)\mathcal{B}_{\gamma}^{-1} \Phi_{\gamma}(0)\Phi_{\gamma}^{\top}(0)\mathcal{B}_{\gamma}^{-1}\Phi_{\gamma}(u) \\
                &\leq \|\Phi_{\gamma}(u)\|_2\|\mathcal{B}_{\gamma}^{-1} \Phi_{\gamma}(0)\Phi_{\gamma}^{\top}(0)\mathcal{B}_{\gamma}^{-1}\|_2\|\Phi_{\gamma}(u)\|_2 \\
                &\leq \|\Phi_{\gamma}(u)\|_2\|\mathcal{B}_{\gamma}^{-1}\|_2 \|\Phi_{\gamma}(0)\Phi_{\gamma}^{\top}(0)\|_2\|\mathcal{B}_{\gamma}^{-1}\|_2\|\Phi_{\gamma}(u)\|_2 \\[-1mm]
                &= \frac{1}{\lambda_{\gamma}^2}\|\Phi_{\gamma}(u)\|_2^2
                \leq \frac{D_m}{\lambda_{\gamma}^2}.
            \end{aligned}
        \end{equation}
        By the bound~\eqref{eq:H.H.2}, and the fact that $\operatorname{Supp}(w_h) = \voisin(h)$ and $0 \leq w_h \leq h^{-d}$ because of~\eqref{eq:w.h} and~\eqref{eq:K}, we deduce
        \begin{equation}\label{eq:bound.Y.i}
            |Y_i(\gamma)| = \frac{w_h(X_i-t)}{n} \left|H_{\gamma}^{\top}(0) H_{\gamma}(X_i-t)\right| \leq \frac{1}{n h^d} \sqrt{\frac{D_m}{\lambda_{\gamma}^2}} = c_{\gamma}.
        \end{equation}
        Using Bernstein's inequality \citep[see][Theorem~2.10]{BoucheronLugosiMassart2013supp}, the conclusion follows.
    \end{proof}

    \begin{lemma}[Concentration of $\hat{v}_{\gamma}$ around $v_{\gamma}$]\label{lem:Bernstein.2}
        Let $\gamma = (m,h)\in \Gamma$ be given.
        Recall the definitions of $\hat{v}_{\gamma}$ and $v_{\gamma}$ from~\eqref{eq:hat.v.gamma} and~\eqref{eq:v.gamma}.
        Assuming that $n h^d W_h \geq (\log n)^3$ and $\finfty < \infty$, there exists a constant $\kappa_1 > 0$ that depends on $\delta > 1$ and $\finfty$ such that
        \begin{equation*}
            \PP\left(|\hat{v}_{\gamma} - v_{\gamma}| > \varepsilon_{\gamma}\right) \leq 2 \exp\big\{-\kappa_1(\log n)^3\big\},
        \end{equation*}
        where recall from~\eqref{eq:c.gamma} that $\varepsilon_{\gamma} = (\delta-1) D_m W_h / (n h^d \lambda_{\gamma}^2)$.
    \end{lemma}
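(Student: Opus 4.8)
The plan is to mimic the proof of Lemma~\ref{lem:Bernstein.1}, applying Bernstein's inequality to the \emph{squared} summands that appear in $\hat v_\gamma$. First I would set $\xi_i = \{H_\gamma^\top(0) H_\gamma(X_i - t) w_h(X_i - t)\}^2$ for $i = 1,\dots,n$. These are i.i.d.\ nonnegative random variables, and from the definitions~\eqref{eq:hat.v.gamma} and~\eqref{eq:v.gamma} one has $\EE\{\xi_1\} = n v_\gamma$ together with the decomposition
\begin{equation*}
    \hat v_\gamma - v_\gamma = \frac{1}{n^2} \sum_{i=1}^n \big(\xi_i - \EE\{\xi_i\}\big),
\end{equation*}
so that the event $\{|\hat v_\gamma - v_\gamma| > \varepsilon_\gamma\}$ coincides with $\{|\sum_{i=1}^n (\xi_i - \EE\{\xi_i\})| > n^2 \varepsilon_\gamma\}$.

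Two ingredients then feed Bernstein. For the almost sure bound, the chain of inequalities~\eqref{eq:H.H.2} combined with $\operatorname{Supp}(w_h) = \voisin(h)$ and $0 \leq w_h \leq h^{-d}$ (exactly as in the proof of Lemma~\ref{lem:Bernstein.1}) yields $0 \leq \xi_i \leq D_m / (\lambda_\gamma^2 h^{2d}) \eqqcolon b_\gamma$ almost surely, hence $|\xi_i - \EE\{\xi_i\}| \leq b_\gamma$. For the variance of the sum, since $0 \leq \xi_i \leq b_\gamma$ we have the self-bounding inequality $\xi_i^2 \leq b_\gamma \xi_i$, which together with $\EE\{\xi_1\} = n v_\gamma$ gives $\sum_{i=1}^n \Var(\xi_i) \leq n\,\EE\{\xi_1^2\} \leq b_\gamma n^2 v_\gamma$. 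Bernstein's inequality (Theorem~2.10 in \cite{BoucheronLugosiMassart2013supp}) then produces
\begin{equation*}
    \PP\big(|\hat v_\gamma - v_\gamma| > \varepsilon_\gamma\big) \leq 2 \exp\left(-\frac{n^2 \varepsilon_\gamma^2}{2 b_\gamma\big(v_\gamma + \varepsilon_\gamma / 3\big)}\right).
\end{equation*}

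It then remains to substitute the explicit value $\varepsilon_\gamma = (\delta-1) D_m W_h / (n h^d \lambda_\gamma^2)$ from~\eqref{eq:c.gamma}, the value $b_\gamma = D_m / (\lambda_\gamma^2 h^{2d})$, and the bound $v_\gamma \leq v_\gamma^\star = W_h D_m \finfty / (\lambda_\gamma^2 n h^d)$ from Proposition~\ref{prop:control.stochastic.term} (recall~\eqref{eq:variance.bound}; replacing $v_\gamma$ by the larger $v_\gamma^\star$ only weakens the bound in the admissible direction). A direct computation shows that the factors $D_m$, $W_h$, $\lambda_\gamma$ and all powers of $h$ cancel, leaving the exponent equal to $(\delta-1)^2\, n h^d W_h / \{2(\finfty + (\delta-1)/3)\}$. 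Finally, invoking the hypothesis $n h^d W_h \geq (\log n)^3$ bounds this exponent below by $\kappa_1 (\log n)^3$ with $\kappa_1 = (\delta-1)^2 / \{2(\finfty + (\delta-1)/3)\} > 0$, which depends only on $\delta$ and $\finfty$, as claimed.

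The only delicate point is that last cancellation: the definition of $\varepsilon_\gamma$ in~\eqref{eq:c.gamma} is calibrated precisely so that the Bernstein exponent collapses to a clean multiple of $n h^d W_h$ with a constant independent of $\gamma$; one has to carry the powers of $h$ and $\lambda_\gamma$ through carefully (in particular to notice that squaring gives the $h^{-2d}$ and $\lambda_\gamma^{-2}$ factors in $b_\gamma$, and that $v_\gamma^\star$ — not $v_\gamma$ itself — must be used to make $\kappa_1$ explicit and uniform). Everything else is routine bookkeeping that parallels the proof of Lemma~\ref{lem:Bernstein.1}.
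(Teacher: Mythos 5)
Your proposal is correct and follows essentially the same route as the paper: the same i.i.d.\ decomposition of $\hat{v}_{\gamma} - v_{\gamma}$ into centered squared summands, the same almost-sure bound $D_m/(\lambda_{\gamma}^2 h^{2d})$ via~\eqref{eq:H.H.2}, a second-moment bound that (after inserting $v_{\gamma}\leq v_{\gamma}^{\star}$) coincides with the paper's $\tilde{v}_{\gamma} = \finfty D_m^2 W_h/(n^3 h^{3d}\lambda_{\gamma}^4)$, and a Bernstein inequality whose exponent collapses to a constant multiple of $n h^d W_h \geq (\log n)^3$. The only cosmetic differences are that the paper bounds $\EE\{Z_i^2(\gamma)\}$ directly via $w_h^4 = h^{-3d} w_h$ rather than through the self-bounding step $\xi_i^2 \leq b_{\gamma}\xi_i$, and its variant of Bernstein yields $\kappa_1 = (\delta-1)^2/\{2(\finfty+\delta-1)\}$ instead of your $(\delta-1)^2/\{2(\finfty+(\delta-1)/3)\}$; both constants are valid.
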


    \begin{proof}[Proof of Lemma~\ref{lem:Bernstein.2}]
        Similarly to the proof of Lemma~\ref{lem:Bernstein.1}, observe that
        \begin{equation*}
            \hat{v}_{\gamma} - v_{\gamma} = \sum_{i=1}^n \left[Z_i(\gamma) - \EE\{Z_i(\gamma)\}\right],
        \end{equation*}
        where
        \begin{equation*}
            Z_i(\gamma) = \frac{1}{n^2} \left\{H_{\gamma}^{\top}(0) H_{\gamma}(X_i-t) w_h(X_i-t)\right\}^2.
        \end{equation*}
        By the bound~\eqref{eq:H.H.2}, the fact that $w_h^4(X_i-t) = h^{-3d} w_h(X_i-t)$ and $0 \leq w_h \leq h^{-d}$ because of~\eqref{eq:w.h} and~\eqref{eq:K}, and $W_h = \smash{\int_{\R^d} w_h(u) \rd u}$ as defined in~\eqref{eq:L.n.W.h}, we have
        \begin{equation*}
            \sum_{i=1}^n \EE\{Z_i^2(\gamma)\} \leq \frac{\finfty D_{m}^2 W_h}{n^3 h^{3d} \lambda_{\gamma}^4} \eqqcolon \tilde{v}_{\gamma}
            \quad \text{and} \quad
            \lvert Z_i(\gamma)\rvert \leq \frac{D_{m}}{n^2 h^{2d} \lambda_{\gamma}^2} \eqqcolon \tilde{c}_{\gamma}.
        \end{equation*}
        Using another version of Bernstein's inequality \citep[see][Corollary~2.11]{BoucheronLugosiMassart2013supp}, we obtain
        \begin{equation*}
            \PP\left(|\hat{v}_{\gamma} - v_{\gamma}| > \varepsilon_{\gamma}\right) \leq 2 \exp\left\{-\frac{\varepsilon_{\gamma}^2}{2(\tilde{v}_{\gamma} + \tilde{c}_{\gamma}\varepsilon_{\gamma})}\right\} = 2 \exp\left\{-\frac{n h^dW_h(\delta-1)^2}{2(\finfty +\delta-1)}\right\}.
        \end{equation*}
        Since we assumed that $n h^dW_h\geq (\log n)^3$, the conclusion follows.
    \end{proof}

\section{Proofs of the propositions and theorems}\label{sec:proofs}

% Manually add a bookmark for the 'proof' environment without creating a subsection entry
\phantomsection
\addcontentsline{toc}{subsection}{Proof of Proposition~\ref*{prop:control.bias}}

\begin{proof}[Proof of Proposition~\ref*{prop:control.bias}]
    Set $t\in \domain$ and $\gamma = (m,h)\in \Gamma$.
    Since $f\in \Sigma(s,L)$, there exists a polynomial $q\in \mathcal{P}_{\llfloor s \rrfloor}$ such that, for any $u\in \voisin(h)$,
    \begin{equation}\label{eq:lem:control.bias.first}
        f(t+u) = q(u) + R(u) \quad \text{where} \quad |R(u)| \leq L \|u\|_{\infty}^s.
    \end{equation}
    Consider the decomposition
    \begin{equation}\label{eq:q.decomposition}
        \begin{aligned}
            q(u)
            &= \sum_{|\alpha|\leq \llfloor s \rrfloor} h^{|\alpha|} q_{\alpha} \varphi_{\alpha}\left(\frac{u}{h}\right)
            = Q_m^{\top} \Phi_{\gamma}(u) + \1_{[m+1,\infty)}(\llfloor s \rrfloor) \sum_{m < |\alpha|\leq \llfloor s \rrfloor} q_{\alpha} \varphi_{\alpha}(u),
        \end{aligned}
    \end{equation}
    where $Q_m$ denotes the vector of the first $D_m$ components of the full $D_{\llfloor s \rrfloor} \times 1$ coordinate vector of $q$ in the basis $\Phi_{(\llfloor s \rrfloor,h)}$.
    Given~\eqref{eq:lem:control.bias.first} and the decomposition of $q$ in~\eqref{eq:q.decomposition}, we can write
    \begin{equation}\label{eq:truncated.decomp}
        f(t+u) = Q_m^{\top} \Phi_{\gamma}(u) + \widetilde{R}(u),
    \end{equation}
    where
    \begin{equation*}
        \begin{aligned}
            |\widetilde{R}(u)|
            &= \left|\1_{[m+1,\infty)}(\llfloor s \rrfloor) \sum_{m < |\alpha|\leq \llfloor s \rrfloor} q_{\alpha} \varphi_{\alpha}(u) + R(u)\right| \\
            &\leq \1_{[m+1,\infty)}(\llfloor s \rrfloor) \left(\sum_{m < |\alpha| \leq \llfloor s \rrfloor} |q_{\alpha}|\right) \|u\|_{\infty}^{\beta_m(s)} + L \|u\|_{\infty}^s
            \leq \mathfrak{L}_{m,s,L} \|u\|_{\infty}^{\beta_m(s)},
        \end{aligned}
    \end{equation*}
    for some appropriate positive real constant $\mathfrak{L}_{m,s,L}$ that depends on $m$, $s$ and $L$, and also denoting $\beta_m(s) = \min(m+1,s)$.
    Note that we can choose $\mathfrak{L}_{m,s,L} = L$ whenever $m\geq\llfloor s \rrfloor$ because the indicator $\1_{[m+1,\infty)}(\llfloor s \rrfloor)$ is simply equal to zero in that case.

    By using the definition of $\hat{f}_{\gamma}(t)$ in~\eqref{eq:hat.f.gamma} and the definition of $H_{\gamma}$ in~\eqref{eq:orthogonalization}, followed by an application of~\eqref{eq:truncated.decomp} together with the fact that $\Phi_{\gamma}^{\top}(0) Q_m = f(t)$, we have
    \begin{equation}\label{eq:calculation.expectation.f.hat}
        \begin{aligned}
            \EE\left\{\hat{f}_{\gamma}(t)\right\}
            &= \int_{\R^d} H_{\gamma}^{\top}(0) H_{\gamma}(u) w_h(u) f(t+u) \rd u \\
            &= \Phi_{\gamma}^{\top}(0) \mathcal{B}_{\gamma}^{-1}\int_{\R^d} \Phi_{\gamma}(u) w_h(u) f(t+u) \rd u \\
            &= f(t) + \Phi_{\gamma}^{\top}(0) \mathcal{B}_{\gamma}^{-1}\int_{\R^d} \Phi_{\gamma}(u) w_h(u) \widetilde{R}(u) \rd u.
        \end{aligned}
    \end{equation}
    Since $\|\Phi_{\gamma}(0)\|_2 = 1$, $\operatorname{Supp}(w_h) = \voisin(h)$, $\max_{|\alpha| \leq m} |\varphi_{\alpha}(u/h)| \leq 1$ for all $u\in \voisin(h)$ and $\|u\|_{\infty} \leq h$ for all $u\in \voisin(h)$, the error term on the right-hand side of~\eqref{eq:calculation.expectation.f.hat} satisfies
    \begin{equation*}
        \begin{aligned}
            \left|\Phi_{\gamma}^{\top}(0) \mathcal{B}_{\gamma}^{-1}\int_{\R^d} \Phi_{\gamma}(u) w_h(u) \widetilde{R}(u) \rd u\right|
            &\leq \|\Phi_{\gamma}(0)\|_2 \left\| \mathcal{B}_{\gamma}^{-1}\int_{\R^d} \Phi_{\gamma}(u) w_h(u) \widetilde{R}(u) \rd u \right\|_2 \\
            &\leq \big\| \mathcal{B}_{\gamma}^{-1} \big\|_2 \left\|\int_{\R^d} \Phi_{\gamma}(u) w_h(u) \widetilde{R}(u) \rd u \right\|_2 \\
            &\leq \lambda_{\gamma}^{-1} \sqrt{D_m}\int_{\R^d} \max_{|\alpha| \leq m} |\varphi_{\alpha}(u/h)| \times |\widetilde{R}(u)| w_h(u) \rd u \\
            &\leq \lambda_{\gamma}^{-1} \sqrt{D_m} \, \mathfrak{L}_{m,s,L}\int_{\R^d} \|u\|_{\infty}^{\beta_m(s)}w_h(u)\rd u \\
            &\leq \lambda_{\gamma}^{-1} \sqrt{D_m} \, \mathfrak{L}_{m,s,L} \, h^{\beta_m(s)} W_h.
        \end{aligned}
    \end{equation*}
    This concludes the proof.
\end{proof}

% Manually add a bookmark for the 'proof' environment without creating a subsection entry
\phantomsection
\addcontentsline{toc}{subsection}{Proof of Proposition~\ref*{prop:control.stochastic.term}}

\begin{proof}[Proof of Proposition~\ref*{prop:control.stochastic.term}]
    By using $w_h^2(u) = h^{-d} w_h(u)$ because of~\eqref{eq:w.h} and~\eqref{eq:K}, the bound~\eqref{eq:H.H.2} from proof of Lemma~\ref{lem:Bernstein.1}, and $W_h = \smash{\int_{\R^d} w_h(u) \rd u}$ as defined in~\eqref{eq:L.n.W.h}, we have
    \begin{equation*}
        \begin{aligned}
            \Var\left\{\hat{f}_{\gamma}(t)\right\} \leq v_{\gamma}
            &= \frac{1}{n}\int_{\R^d} \left\{H_{\gamma}^{\top}(0) H_{\gamma}(u) w_h(u)\right\}^2 f(t+u) \rd u \\
            &\leq \frac{\finfty}{n h^d}\int_{\R^d} \left\{H_{\gamma}^{\top}(0) H_{\gamma}(u)\right\}^2 w_h(u) \rd u
            \leq \frac{\finfty}{n h^d} \times \frac{D_m W_h}{\lambda_{\gamma}^2}.
        \end{aligned}
    \end{equation*}
    This concludes the proof.
\end{proof}

% Manually add a bookmark for the 'proof' environment without creating a subsection entry
\phantomsection
\addcontentsline{toc}{subsection}{Proof of Theorem~\ref*{thm:oracle.result}}

\begin{proof}[Proof of Theorem~\ref*{thm:oracle.result}]
    Set $t\in \domain$ and $\gamma=(m,h)\in \Gamma_{\!n}$.
    By following the line of proof of Theorem~1 (Steps~2 and~3) in \cite{BertinKlutchnikoff2017supp}, we obtain
    \begin{equation*}
        |\hat{f}(t) - f(t)| \leq 2\big(\hat{A}_{\gamma} + \hat{\mathbb{U}}_{\gamma}\big) + |\hat{f}_{\gamma}(t) - f(t)|,
    \end{equation*}
    where $\hat{A}_{\gamma}$ and $\hat{\mathbb{U}}_{\gamma}$ are defined in~\eqref{eq:hat.A.gamma} and~\eqref{eq:hat.U.gamma}, respectively.
    Furthermore, the paper also establishes that
    \begin{equation*}
        \hat{A}_{\gamma} \leq 2 \mathbb{B}_{\gamma} + 2T,
        \quad \text{where} \quad
        T = \max_{\gamma'\in \Gamma_{\!n}} \left[\left|\hat{f}_{\gamma'}(t) - \EE\left\{\hat{f}_{\gamma'}(t)\right\}\right| - \hat{\mathbb{U}}_{\gamma'}\right]_+.
    \end{equation*}
    Recall that $(\cdot)_+ = \max\{\cdot \, ,0\}$.
    Together, the last two equations imply
    \begin{equation}\label{eq:thm:oracle.result.bound.decomp.R.t}
        \pwrisk(\hat{f}, f) \leq 4 \mathbb{B}_{\gamma} + 2 \big\{\EE(\hat{\mathbb{U}}_{\gamma}^2)\big\}^{1/2} + \pwrisk(\hat{f}_{\gamma}, f) + 4 \big\{\EE(T^2)\big\}^{1/2}.
    \end{equation}

    Using the triangle inequality for the $\EE\{(\cdot)^2\}^{1/2}$ norm, $\EE(\hat{v}_{\gamma}) = v_{\gamma}$, and the subadditivity of the function $x\mapsto x^{1/2}$, we obtain
    \begin{equation}\label{eq:thm:oracle.result.bound.norm.hat.U.gamma}
        \begin{aligned}
            \big\{\EE(\hat{\mathbb{U}}_{\gamma}^2)\big\}^{1/2}
            &\leq \sqrt{\EE\left\{2(\hat{v}_{\gamma}+\varepsilon_{\gamma})\mathrm{pen}(\gamma)\right\}} + c_{\gamma} \mathrm{pen}(\gamma) \\
            &= \sqrt{2(v_{\gamma}+\varepsilon_{\gamma})\mathrm{pen}(\gamma)} + c_{\gamma} \mathrm{pen}(\gamma) \\
            &\leq \sqrt{2v_{\gamma} \mathrm{pen}(\gamma)}+ c_{\gamma} \mathrm{pen}(\gamma) +\sqrt{2\varepsilon_{\gamma} \mathrm{pen}(\gamma)} \\
            &= \mathbb{U}_{\gamma} + \sqrt{2\varepsilon_{\gamma} \mathrm{pen}(\gamma)} = \mathbb{U}_{\gamma} + \sqrt{(\delta-1) / \finfty} \sqrt{2 v_{\gamma}^{\star} \mathrm{pen}(\gamma)} \\
            &\leq \left\{1 + \sqrt{(\delta-1) / \finfty}\right\} \mathbb{U}_{\gamma}^{\star},
        \end{aligned}
    \end{equation}
    where the quantities $\hat{v}_{\gamma}$, $\varepsilon_{\gamma}$, $c_{\gamma}$, $\mathrm{pen}(\gamma)$ and $\mathbb{U}_{\gamma}$ are all defined in Section~\ref*{sec:selection-procedure}, and $\mathbb{U}_{\gamma}^{\star}$ is defined in~\eqref{eq:B.gamma.and.U.star.gamma}.
    Note also that, since $\delta$ and $\lvert\log h\rvert$ are both at least $1$ by assumption (we assumed $\delta > 1$ in Section~\ref*{sec:selection-procedure}, and also $h\in (0,\rho] \subseteq (0,e^{-1}]$ at the beginning of Section~\ref*{sec:framework}), we have $\mathrm{pen}(\gamma) \geq 1$, and thus $\sqrt{v_{\gamma}} \leq \sqrt{2v_{\gamma} \mathrm{pen}(\gamma)}+ c_{\gamma} \mathrm{pen}(\gamma) = \mathbb{U}_{\gamma}$. Therefore,
    \begin{equation}\label{eq:thm:oracle.result.bound.R.t}
        \begin{aligned}
            \pwrisk(\hat{f}_{\gamma}, f)
            &\leq \left|\EE\left\{\hat{f}_{\gamma}(t)\right\} - f(t)\right| + \left(\EE\left[\left|\hat{f}_{\gamma}(t) - \EE\left\{\hat{f}_{\gamma}(t)\right\}\right|^2\right]\right)^{1/2} \\
            &\leq \mathbb{B}_{\gamma} + \sqrt{v_{\gamma}} \leq \mathbb{B}_{\gamma} + \mathbb{U}_{\gamma} \leq \mathbb{B}_{\gamma} + \mathbb{U}_{\gamma}^{\star},
        \end{aligned}
    \end{equation}
    where the last inequality follows from $v_{\gamma} \leq v_{\gamma}^{\star}$ in~\eqref{eq:variance.bound}.

    By applying the bounds~\eqref{eq:thm:oracle.result.bound.norm.hat.U.gamma} and~\eqref{eq:thm:oracle.result.bound.R.t} back into~\eqref{eq:thm:oracle.result.bound.decomp.R.t}, we get
    \begin{equation}\label{eq:decomp.adaptive}
        \pwrisk(\hat{f}, f) \leq \left[5 \mathbb{B}_{\gamma} + \left\{3 + 2 \sqrt{(\delta-1) / \finfty}\right\} \mathbb{U}_{\gamma}^{\star}\right] + 4 \big\{\EE(T^2)\big\}^{1/2}.
    \end{equation}
    To conclude, it remains to study the term $\EE(T^2)$. To accomplish this, define the event
    \begin{equation}\label{eq:set.A}
        \mathcal{A} = \bigcap_{\gamma\in \Gamma_{\!n}}\left\{|\hat{v}_{\gamma} - v_{\gamma}| \leq \varepsilon_{\gamma}\right\}.
    \end{equation}
    We decompose the expectation of interest as follows:
    \begin{equation*}
        \EE(T^2) = \EE(T^2\1_{\mathcal{A}}) + \EE(T^2\1_{\mathcal{A}^c}).
    \end{equation*}

    Let us bound the term $\EE(T^2\1_{\mathcal{A}^c})$ first. Notice that $D_m = \smash{\binom{m+d}{d}} \leq \mathfrak{K}_d m^d$ for some positive real constant $\mathfrak{K}_d$, which implies
    \begin{equation*}
        T
        \leq \max_{\gamma\in \Gamma_{\!n}} \left|\hat{f}_{\gamma}(t) - \EE\left\{\hat{f}_{\gamma}(t)\right\}\right|
        \leq 2n\max_{\gamma\in \Gamma_{\!n}} c_{\gamma}
        \leq 2^{d+1} \sqrt{\mathfrak{K}_d} (\log n)^{d/2-3} n \, \max_{\gamma\in \Gamma_{\!n}} \frac{1}{\lambda_{\gamma}},
    \end{equation*}
    where the second inequality follows from~\eqref{eq:hat.f.Y.i.decomp} and~\eqref{eq:bound.Y.i}, and the last inequality follows from the definition of $c_{\gamma}$ in~\eqref{eq:c.gamma} and the aforementioned bound on $D_m$ together with the fact that $m \leq \log n$ and $(n h^d)^{-1} \leq W_h (\log n)^{-3} \leq 2^d (\log n)^{-3}$ for $(m,h)\in \Gamma_n$; recall~\eqref{eq:L.n.W.h}.
    Using our assumption on $\lambda_{\gamma}$ in~\eqref{eq:domain.lambda}, together with that fact that $m\leq \log n$ and $h \geq \rho n^{-1}$ (recall~\eqref{eq:h.in.Gamma.n}), we obtain, for $n$ large enough,
    \begin{equation*}
        T \leq 2^{d+1} \sqrt{\mathfrak{K}_d} (\log n)^{d/2-3} n \exp\big\{2 b(\log n)^2\big\}.
    \end{equation*}
    By a union bound and the concentration bound on $\hat{v}_{\gamma}$ in Lemma~\ref{lem:Bernstein.2}, this implies
    \begin{equation*}
        \begin{aligned}
            \EE(T^2\1_{\mathcal{A}^c})
            &\leq 4^{d+1} \mathfrak{K}_d (\log n)^{d-6} n^2 \exp\big\{4 b(\log n)^2\big\} \times \PP(\mathcal{A}^c) \\
            &\leq 4^{d+1} \mathfrak{K}_d (\log n)^{d-6} n^2 \exp\big\{4 b(\log n)^2\big\} \times \operatorname{card}(\Gamma_{\!n}) \times 2 \exp\big\{-\kappa_1 (\log n)^3\big\}.
        \end{aligned}
    \end{equation*}
    Since $\operatorname{card}(\Gamma_{\!n})\leq \log n$, we deduce that
    \begin{equation}\label{eq:T2barA}
        \EE(T^2\1_{\mathcal{A}^c}) \lesssim \exp\big\{-(\kappa_1 / 2) (\log n)^3\big\},
    \end{equation}
    with much room to spare.

    It remains to study $\EE(T^2\1_{\mathcal{A}})$.
    Note that under the event $\mathcal{A}$ defined in~\eqref{eq:set.A}, we have $\hat{\mathbb{U}}_{\gamma} \geq \mathbb{U}_{\gamma}$ for all $\gamma\in \Gamma_{\!n}$ (to see this, compare~\eqref{eq:hat.U.gamma} and~\eqref{eq:U.gamma}).
    This implies that
    \begin{equation*}
        \EE(T^2\1_{\mathcal{A}}) \leq \EE(\tilde{T}^2)
    \end{equation*}
    where
    \begin{equation*}
        \tilde{T} = \max_{\gamma'\in \Gamma_{\!n}} \left[\left|\hat{f}_{\gamma'}(t) - \EE\left\{\hat{f}_{\gamma'}(t)\right\}\right| - \mathbb{U}_{\gamma'}\right]_+.
    \end{equation*}
    For simplicity of notations, define $r_{\gamma'}(x) = r_{\gamma'}(v_{\gamma'}, x)$; recall \eqref{eq:r.gamma.and.pen.gamma}.
    Using integration by parts and the change of variable $u = r_{\gamma'}(x)$, we obtain
    \begin{equation*}
        \begin{aligned}
            \EE(\tilde{T}^2)
            &\leq \sum_{\gamma'\in \Gamma_{\!n}}\EE \left(\left[\left|\hat{f}_{\gamma'}(t) - \EE\left\{\hat{f}_{\gamma'}(t)\right\}\right| - \mathbb{U}_{\gamma'}\right]_+^2\right) \\
            &= \sum_{\gamma'\in \Gamma_{\!n}}\int_0^{\infty} 2 u \PP\left[\left|\hat{f}_{\gamma'}(t) - \EE\left\{\hat{f}_{\gamma'}(t)\right\}\right| > \mathbb{U}_{\gamma'} + u\right] \rd u \\
            &= \sum_{\gamma'\in \Gamma_{\!n}}\int_0^{\infty} 2r_{\gamma'}(x)r'_{\gamma'}(x) \PP\left[\left|\hat{f}_{\gamma'}(t) - \EE\left\{\hat{f}_{\gamma'}(t)\right\}\right| > r_{\gamma'}\left\{\mathrm{pen}(\gamma')\right\} + r_{\gamma'}(x)\right] \rd x.
        \end{aligned}
    \end{equation*}
    Now, using the fact that $r_{\gamma'}(\cdot)$ is a sub-additive function which also satisfies $x r'_{\gamma'}(x)\leq r_{\gamma'}(x)$ for all $x > 0$, we get
    \begin{equation*}
        \EE(\tilde{T}^2)
        \leq \sum_{\gamma'\in \Gamma_{\!n}}\int_0^{\infty} 2 x^{-1} r_{\gamma'}^2(x) \PP\left[\left|\hat{f}_{\gamma'}(t) - \EE\left\{\hat{f}_{\gamma'}(t)\right\}\right| > r_{\gamma'}\left\{\mathrm{pen}(\gamma')+x\right\}\right] \rd x.
    \end{equation*}
    As we noted earlier just above~\eqref{eq:thm:oracle.result.bound.R.t}, we have $\mathrm{pen}(\gamma') \geq 1$ because of our assumptions $\delta > 1$ and $-|\log \rho| \geq 1$, which allows us to apply the concentration of $\smash{\hat{f}_{\gamma'}(t)}$ around its mean in Lemma~\ref{lem:Bernstein.1}.
    Together with the identity $r_{\gamma'}^2(x) \leq 4 v_{\gamma'} x + 2 c_{\gamma'}^2 x^2$ and $\lvert\log h_{\ell'}\rvert = \lvert\log \rho - \ell'\rvert \geq \ell' -|\log \rho| \geq \ell'$, this yields
    \begin{equation*}
        \begin{aligned}
            \EE(\tilde{T}^2)
            &\leq \sum_{\gamma'\in \Gamma_{\!n}}\int_0^{\infty} 2 x^{-1} r_{\gamma'}^2(x) \times 2 \exp\left(-d\delta\lvert\log h_{\ell'}\rvert - \Lambda_{\gamma'} - x\right) \rd x \\
            &\leq I_1 \sum_{\gamma'\in \Gamma_{\!n}} (2v_{\gamma'} + c_{\gamma'}^2) \exp\left(-d\delta\ell' - \Lambda_{\gamma'}\right),
        \end{aligned}
    \end{equation*}
    where $I_1 =\int_0^{\infty} 8 \max(1,x) \exp(-x) \rd x <\infty$.
    Since $n h^d_{\ell'}W_{h_{\ell'}}\geq (\log n)^3$, $\Lambda_{\gamma'} = 2\lvert\log(\lambda_{\gamma'})\rvert$ and $h_{\ell'} = \rho \exp(-\ell')$, we can write, using $v_{\gamma} \leq v_{\gamma}^{\star}$ in Proposition~\ref*{prop:control.stochastic.term} and $c_{\gamma'}$ in~\eqref{eq:c.gamma},
    \begin{equation*}
        \EE(\tilde{T}^2) \leq \frac{I_1}{n \rho^d} \left\{2 \finfty + \frac{1}{(\log n)^3}\right\} \sum_{\ell'\in \mathcal{L}_n} D_{m_{\ell'}} W_{h_{\ell'}} \exp\left\{-d(\delta-1)\ell')\right\}.
    \end{equation*}
    Moreover, since $D_{m_{\ell'}} \leq \mathfrak{K}_d m_{\ell'}^d$, $m_{\ell'} \leq \log n$ for all $\ell'\in \mathcal{L}_n$, and $W_{h_{\ell'}} \leq 2^d$, we can write
    \begin{equation*}
        \EE(\tilde{T}^2) \leq \frac{I_1 \mathfrak{K}_d (\log n)^d 2^d}{n \rho^d} \left\{2 \finfty + \frac{1}{(\log n)^3}\right\} \sum_{\ell'\in \mathcal{L}_n} \exp\left\{-d(\delta-1)\ell'\right\}.
    \end{equation*}
    Since $d(\delta-1)$ is a positive number (we assumed $\delta > 1$ in Section~\ref*{sec:selection-procedure}), the exponential terms above are summable, so we have
    \begin{equation}\label{eq:T2A}
        \EE(T^2\1_{\mathcal{A}}) \leq \EE(\tilde{T}^2) \lesssim \frac{(\log n)^d}{n}.
    \end{equation}
    By combining~\eqref{eq:T2barA} and~\eqref{eq:T2A}, we conclude that $\EE(T^2) \lesssim (\log n)^d/n$.
    Together with~\eqref{eq:decomp.adaptive}, the proof of~\eqref{eq:thm:oracle.result.to.prove} is complete.
\end{proof}

% Manually add a bookmark for the 'proof' environment without creating a subsection entry
\phantomsection
\addcontentsline{toc}{subsection}{Proof of Proposition~\ref*{prop:minimaxity.simple.domains}}

\begin{proof}[Proof of Proposition~\ref*{prop:minimaxity.simple.domains}]
    In this proof, the parameters $\gamma = (m,h)$ are chosen to be the ones in~\eqref{eq:def-gamma-minimax-usual}. By combining the bounds on the bias and variance found in Propositions~\ref*{prop:control.bias} and~\ref*{prop:control.stochastic.term} together with Lemma~\ref{lem:usual.behavior} ($W_h \leq 2^d$ and $\inf_{h\in (0,\rho]} \lambda_{\gamma} \geq \lambda_{\star}(m) > 0$ under Assumption~\ref*{ass:1}) and Lemma~\ref{lem:infty.norm.f}, we obtain
    \begin{equation*}
        \begin{aligned}
            \pwrisk(\hat{f}_{\gamma}, f)
            &\leq \frac{\sqrt{D_m}}{\lambda_{\gamma}} \times \left\{W_h^2 \mathfrak{L}_{m,s,L}^2 \, h^{2s} + \frac{W_h \lVert f\rVert_{\voisin(\rho)}}{nh^d}\right\}^{1/2} \\
            &\leq \frac{2^d \sqrt{D_m}}{\lambda_{\star}(m)} \times \left[\mathfrak{L}_{m,s,L}^2 \, h^{2s} + \frac{\mathfrak{F}_{s,L}}{2^d nh^d}\right]^{1/2}.
        \end{aligned}
    \end{equation*}
    Our choice of $h$ in~\eqref{eq:def-gamma-minimax-usual} is simply the minimizer of the last bound. In particular, it should be noted that $h$ decreases as $n$ increases.
    Elementary computations show that, with this choice of $h$, the above yields
    \begin{equation*}
        \pwrisk(\hat{f}_{\gamma}, f) \leq C(s,L) N_n(s,L),
    \end{equation*}
    where $C(s,L)$ is the positive real constant defined in~\eqref{eq:C.s.L} and $N_n(s,L) =  n^{-s/(2s+d)}$ is the rate of convergence defined in~\eqref{eq:N.n}. This proves the upper bound.

    To prove the lower bound, we use Lemma~3 of \cite{Lepski2015supp}.
    It is sufficient to construct two functions $f_0$ and $f_1$ that satisfy the following properties:
    \begin{enumerate}
        \item $f_0$ and $f_1$ are two density  functions that belong to $\Sigma(s,L)$.
        \item There exists a positive real constant $A = A(s,L) > 0$ such that $f_1(t) - f_0(t) = A h^s$.
        \item There exists a positive real constant $\mathfrak{a} = \mathfrak{a}(s,L) > 0$ such that
        \begin{equation*}
            \EE_{f_0,n}\left\{\prod_{i=1}^n \frac{f_1(X_i)}{f_0(X_i)}\right\}^2
            \leq \mathfrak{a},
        \end{equation*}
        where $\EE_{f_0,n}$ denotes the expectation with respect to the law $\PP_{\!f_0,n}$ of the random sample $\mathbb{X}_n$, provided $f_0$ is the true density of the observations $X_i$.
    \end{enumerate}
    To construct these densities, we consider an auxiliary density function $\psi\in \mathcal{C}^{\infty}(\R^d)$ whose support is $\Delta$ and which satisfies $\psi(0) = \mathrm{Leb}(\domain)$. Such a function can easily be constructed. We also define, for any $u\in \R^d$,
    \begin{equation*}
        \psi_h(u) = \psi\left(\frac{u}{h}\right) \1_{\voisin(h)}(u)
        \quad \text{and} \quad
        c_h = \frac{1}{\mathrm{Leb}(\domain)}\int_{\R^d} \psi_h(u) \rd u.
    \end{equation*}
    Furthermore, for all $x\in \R^d$, define
    \begin{equation*}
        f_0(x) = \frac{\1_{\domain}(x)}{\mathrm{Leb}(\domain)}
        \quad \text{and} \quad
        f_1(x) = f_0(x) + A h^s \tilde{\psi}_h(x-t) \1_{\domain}(x),
    \end{equation*}
    for some constant $A > 0$ to be chosen later, where
    \begin{equation*}
        \tilde{\psi}_h(u)
        =
        \frac{\{\psi_h(u) / \mathrm{Leb}(\domain)\}  - c_h}{1-c_h},
        \quad u\in \R^d.
    \end{equation*}
    Since $\tilde{\psi}(0) = 1$, the second point follows.

    Now, since $\psi$ is a density function, we have $c_h \leq h^d/\mathrm{Leb}(\domain)$.
    Hence, using the fact that the function $x\mapsto -x / (1 - x)$ is decreasing on $(0,1)$, and taking $n$ large enough (and thus $h$ small enough by~\eqref{eq:def-gamma-minimax-usual}) so that $h^d / \mathrm{Leb}(\domain)\leq 1/2$, we have
    \begin{equation*}
        \tilde{\psi}_h(u) \geq \frac{- c_h}{1-c_h} \geq \frac{- h^d / \mathrm{Leb}(\domain)}{1 - h^d / \mathrm{Leb}(\domain)} \geq - 2 \frac{h^d}{\mathrm{Leb}(\domain)},
    \end{equation*}
    which also implies that $f_1\geq 0$ for large $n$. We easily deduce that $f_1$ is a density function using the fact that $\smash{\int_{\domain - t} \tilde{\psi}_h(u) \rd u = 0}$. Since $f_0$ is also a density function, it remains to prove that both $f_0$ and $f_1$ belong to $\Sigma(s,L)$ to obtain the first point. This is obvious for $f_0$. Now, since $\psi\in \mathcal{C}^{\infty}(\R^d)$ is supported on $\Delta$, there exists a positive real constant $L_s(\psi) > 0$ such that $\psi\in \Sigma\{s,L_s(\psi)\}$. In turn, this implies that $f_1\in \Sigma\{s, A \times 2 \, L_s(\psi) / \mathrm{Leb}(\domain)\}$. By choosing the constant $A = \{2 \, L_s(\psi) / \mathrm{Leb}(\domain)\}^{-1} L$, we deduce that $f_1\in \Sigma(s,L)$.

    Lastly, it remains to prove the third point. Note that
    \begin{equation}\label{eq:I.develop}
        \begin{aligned}
            \EE_{f_0,n}\left\{\prod_{i=1}^n \frac{f_1(X_i)}{f_0(X_i)}\right\}^2
            &= \left\{\int_{\domain} \frac{f_1^2(x)}{f_0(x)} \rd x\right\}^n \\
            &= \left[\int_{\domain - t} \frac{\{1+\mathrm{Leb}(\domain) A h^s \tilde{\psi}_h(u)\}^2}{\mathrm{Leb}(\domain)} \rd u\right]^n \\
            &=\left\{1+ \mathrm{Leb}(\domain) A^2 h^{2s}\int_{\domain - t} \tilde{\psi}_h^2(u) \rd u\right\}^n.
        \end{aligned}
    \end{equation}
    To obtain the last equality, we used the fact that $\smash{\int_{\domain - t} \tilde{\psi}_h(u) \rd u = 0}$.
    It remains to study the integral that appears on the last line.
    Using the elementary identity $(a + b)^2 \leq 2 a^2 + 2 b^2$ together with the aforementioned bound $c_h \leq h^d/\mathrm{Leb}(\domain)$, and taking $n$ large enough (and thus $h$ small enough by~\eqref{eq:def-gamma-minimax-usual}) so that
    \begin{equation*}
        1 - c_h \geq 1/2 \quad \text{and} \quad h^d \leq 1/\mathrm{Leb}(\domain),
    \end{equation*}
    we have
    \begin{equation}\label{eq:I.develop.next}
        \begin{aligned}
            \int_{\domain - t} \tilde{\psi}_h^2(u) \rd x
            &\leq \frac{2}{\{\mathrm{Leb}(\domain)\}^2} \int_{\voisin(h)} \frac{\psi^2(u/h)}{(1-c_h)^2} \rd u + 2\int_{\domain} \frac{(- c_h)^2}{(1-c_h)^2} \rd u \\
            &\leq \frac{8 h^d}{\{\mathrm{Leb}(\domain)\}^2} \int_{\Delta} \psi^2(u) \rd u + \frac{8 h^{2d}}{\mathrm{Leb}(\domain)} \\
            &\leq \frac{8}{\{\mathrm{Leb}(\domain)\}^2} \left\{\int_{\Delta} \psi^2(u) \rd u + 1\right\} h^d.
        \end{aligned}
    \end{equation}
    Finally, since $1+x \leq e^x$, putting~\eqref{eq:I.develop.next} into~\eqref{eq:I.develop} yields the following bound:
    \begin{equation}\label{eq:end.proof.Prop.4.5}
        \begin{aligned}
            \EE_{f_0,n}\left\{\prod_{i=1}^n \frac{f_1(X_i)}{f_0(X_i)}\right\}^2
            &\leq \left[ 1 + \frac{8 A^2}{\mathrm{Leb}(\domain)} \left\{\int_{\Delta} \psi^2(u) \rd u + 1\right\} h^{2s+d} \right]^n \\[-1mm]
            &\leq \exp\left[\frac{8 A^2}{\mathrm{Leb}(\domain)} \left\{\int_{\Delta} \psi^2(u) \rd u + 1\right\} n h^{2s+d}\right] \\
            &= \exp\left[\frac{d \, \mathfrak{F}_{s,L}}{2^{d+1} s \mathfrak{L}_{m,s,L}^2} \times \frac{8 A^2}{\mathrm{Leb}(\domain)} \left\{\int_{\Delta} \psi^2(u) \rd u + 1\right\}\right] \eqqcolon \mathfrak{a}.
        \end{aligned}
    \end{equation}
    This concludes the proof.
\end{proof}

% Manually add a bookmark for the 'proof' environment without creating a subsection entry
\phantomsection
\addcontentsline{toc}{subsection}{Proof of Theorem~\ref*{thm:adaptivity.simple.domains}}

\begin{proof}[Proof of Theorem~\ref*{thm:adaptivity.simple.domains}]
    We prove the upper bound first. Let $(s,L)\in (0,\infty)^2$ be given.
    Under Assumption~\ref*{ass:1}, Lemma~\ref{lem:usual.behavior} shows that
    \begin{equation*}
        \lambda_{\gamma} \geq \min_{0\leq m' \leq \lfloor 2s+d \rfloor} \lambda_{(m',h)} \geq \lambda_{\bullet}(s,d) > 0,
    \end{equation*}
    where recall the quantity $\lambda_{\bullet}(s,d) = \min_{0\leq m' \leq \lfloor 2s+d \rfloor} \lambda_{\star}(m')$ depends only on $s$ and $d$.
    In particular, the condition~\eqref{eq:domain.lambda} of Theorem~\ref*{thm:oracle.result} is satisfied.
    Moreover, Lemma~\ref{lem:infty.norm.f} shows that there exists an appropriate constant $\mathfrak{F}_{s,L} > 0$ such that
    \begin{equation*}
        \sup_{f\in \Sigma(s,L)} \finfty \leq \mathfrak{F}_{s,L} <\infty.
    \end{equation*}
    Therefore, Theorem~\ref*{thm:oracle.result} can be applied to obtain
    \begin{equation}\label{eq:oracle.adaptivity.simple.domains}
        \pwrisk(\hat{f}, f)
        \leq \min_{\gamma\in \Gamma_{\!n}} \left[5 \mathbb{B}_{\gamma} + \left\{3 + 2 \sqrt{(\delta-1) / \finfty}\right\} \mathbb{U}_{\gamma}^{\star}\right] + \mathcal{O}\left\{\sqrt\frac{(\log n)^d}{n}\right\}.
    \end{equation}

    Now, define
    \begin{equation*}
        \ell_0 = \left\lfloor\frac{1}{2s+d} \log\left\{\frac{(\mathfrak{L}_{s,L}^{\star})^2}{\mathfrak{F}_{s,L}} \times \frac{n}{\log n}\right\} + \log \rho\right\rfloor,
    \end{equation*}
    where recall $\mathfrak{L}_{s,L}^{\star} = \max_{0\leq m' \leq \lfloor 2s+d \rfloor} \mathfrak{L}_{m',s,L}$ depends only on $s$, $L$ and $d$. In the following, consider $n$ large enough so that $\ell_0\geq 1$.
    Using this definition, we have
    \begin{equation}\label{eq:h.asymptotics}
        1 \leq h_{\ell_0} \left\{\frac{\mathfrak{F}_{s,L}}{(\mathfrak{L}_{s,L}^{\star})^2} \times \frac{\log n}{n}\right\}^{-1/(2s+d)} \leq e.
    \end{equation}
    In the following, we consider $\gamma_0 = (m_{\ell_0}, h_{\ell_0})$ which belongs to $\Gamma_{\!n}$.
    From~\eqref{eq:m.l}, recall that
    \begin{equation*}
        m_{\ell_0} = \left\lfloor\frac{\log n}{2\ell_0}\right\rfloor.
    \end{equation*}
    Let us remark that if $\gamma = (m,h)\in \Gamma_{\!n}$ is such that $\gamma\preceq\gamma_0$, then $m\leq m_{\ell_0}\leq 2s+d$ for $n$ large enough.

    By Proposition~\ref*{prop:control.bias}, we have directly, for $n$ large enough,
    \begin{equation}\label{eq:bound.B.max}
        \begin{aligned}
            \mathbb{B}_{\gamma_0}
            &= \max_{\substack{\gamma\in \Gamma_{\!n} \\ \gamma \preceq \gamma_0}} \left|\EE\left\{\hat{f}_{\gamma}(t)\right\} - f(t)\right| \\
            &\leq \max_{\substack{\gamma\in \Gamma_{\!n} \\ \gamma \preceq \gamma_0}} \frac{W_h\sqrt{D_m}}{\lambda_{\gamma}} \times \mathfrak{L}_{m,s,L} \, h^{\beta_m(s)}
            \leq C^{\star}(s,d) \max_{\substack{\gamma\in \Gamma_{\!n} \\ \gamma \preceq \gamma_0}} \mathfrak{L}_{m,s,L} \, h^{\beta_m(s)},
        \end{aligned}
    \end{equation}
    where
    \begin{equation}\label{eq:notation.C.D.star}
        C^{\star}(s,d) = \frac{2^d \sqrt{D^{\star}(s,d)}}{\lambda_{\bullet}(s,d)}
        \quad \text{and} \quad
        D^{\star}(s,d) = \max_{0 \leq m' \leq 2s+d} D_{m'}.
    \end{equation}
    It remains to bound the maximum that appears on the right-hand side of~\eqref{eq:bound.B.max}.
    To achieve this, fix $\gamma\in \Gamma_{\!n}$ such that $\gamma\preceq\gamma_0$. By definition, there exists $\ell\in \mathcal{L}_n$ such that $\gamma = (m_{\ell}, h_{\ell})$.
    Assume first that $m_{\ell} + 1 < s$, so that $\beta_{m_{\ell}}(s)= m_{\ell}+1$. Then, using $h_{\ell} = \rho e^{-\ell} \leq e^{-\ell}$, we have
    \begin{equation}\label{eq:bound.B.max.case.1}
        \mathfrak{L}_{m_{\ell},s,L}h_{\ell}^{\beta_{m_{\ell}}(s)}
        \leq \mathfrak{L}_{s,L}^{\star} \exp\left(- \frac{\ell\log n}{2\ell}\right)
        = \frac{\mathfrak{L}_{s,L}^{\star}}{\sqrt{n}}.
    \end{equation}
    Similarly, if we assume $m_{\ell}+1 \geq s$, then using the upper bound in \eqref{eq:h.asymptotics}, we have
    \begin{equation}\label{eq:bound.B.max.case.2}
        \begin{aligned}
            \mathfrak{L}_{m_{\ell},s,L} h_{\ell}^{\beta_{m_{\ell}}(s)}
            &\leq \mathfrak{L}_{s,L}^{\star} h_{\ell}^s \leq \mathfrak{L}_{s,L}^{\star} h_{\ell_0}^s \\
            &\leq e^s \, \mathfrak{F}_{s,L}^{~s/(2s+d)} (\mathfrak{L}_{s,L}^{\star})^{d/(2s+d)} \left(\frac{\log n}{n}\right)^{s/(2s+d)}.
        \end{aligned}
    \end{equation}
    Therefore, by putting the estimates~\eqref{eq:bound.B.max.case.1} and~\eqref{eq:bound.B.max.case.2} back into~\eqref{eq:bound.B.max}, we have, for $n$ large enough,
    \begin{equation}\label{eq:bias.param.0}
        \mathbb{B}_{\gamma_0} \leq e^s \, C^{\star}(s,d) \, \mathfrak{F}_{s,L}^{~s/(2s+d)} (\mathfrak{L}_{s,L}^{\star})^{d/(2s+d)} \left(\frac{\log n}{n}\right)^{s/(2s+d)}.
    \end{equation}

    Now, let us bound $\mathbb{U}_{\gamma_0}^{\star}$. By combining~\eqref{eq:B.gamma.and.U.star.gamma} and~\eqref{eq:r.gamma.and.pen.gamma}, we can write
    \begin{equation*}
        \mathbb{U}_{\gamma_0}^{\star} = \sqrt{2v_{\gamma_0}^{\star}\mathrm{pen}(\gamma_0)} + c_{\gamma_0} \mathrm{pen}(\gamma_0),
        \quad \text{with} \quad
        \mathrm{pen}(\gamma_0) = d\delta\lvert\log h_{\ell_0}\rvert + 2\lvert\log(\lambda_{\gamma_0})\rvert.
    \end{equation*}
    Let us first control the terms $c_{\gamma_0}$ and $v_{\gamma_0}^{\star}$ defined~\eqref{eq:c.gamma} and~\eqref{eq:variance.bound}, respectively. Using the notation in~\eqref{eq:notation.C.D.star}, we have
    \begin{equation}\label{eq:four.bounds.c.v}
        \begin{aligned}
            c_{\gamma_0}
            &\leq \frac{\sqrt{D^{\star}(s,d)}}{\lambda_{\gamma_0}} \times \frac{1}{nh_{\ell_0}^d} \leq \frac{C^{\star}(s,d)}{2^d} \times \frac{1}{nh_{\ell_0}^d}, \\
            v_{\gamma_0}^{\star}
            &\leq \frac{2^d D^{\star}(s,d)}{\lambda_{\gamma_0}^2} \times \frac{\mathfrak{F}_{s,L}}{nh_{\ell_0}^d} \leq \frac{\{C^{\star}(s,d)\}^2 \mathfrak{F}_{s,L}}{2^d} \times \frac{1}{nh_{\ell_0}^d}.
        \end{aligned}
    \end{equation}
    Now, using the subadditivity of the function $x\mapsto \smash{\sqrt{2v_{\gamma_0}^{\star}x}} + c_{\gamma_0} x$, the four bounds in~\eqref{eq:four.bounds.c.v}, and the fact that the functions $\lambda \mapsto |\log \lambda| / \lambda^k$ ($k\in \{1,2\}$) are decreasing for $\lambda\in (0,1)$ and uniformly bounded by $1$ for $\lambda\geq 1$, we obtain
    \begin{equation*}
        \begin{aligned}
            \mathbb{U}_{\gamma_0}^{\star}
            &\leq \sqrt{2v_{\gamma_0}^{\star}d\delta\lvert\log h_{\ell_0}\rvert} + c_{\gamma_0} d\delta\lvert\log h_{\ell_0}\rvert + \sqrt{4v_{\gamma_0}^{\star} \lvert\log(\lambda_{\gamma_0})\rvert} + 2 c_{\gamma_0} \lvert\log(\lambda_{\gamma_0})\rvert \\
            &\leq \sqrt{\frac{\{C^{\star}(s,d)\}^2 \mathfrak{F}_{s,L} d\delta}{2^{d-1}} \times \frac{|\log h_{\ell_0}|}{n h_{\ell_0}^d}} + \frac{C^{\star}(s,d) d\delta}{2^d} \times \frac{|\log h_{\ell_0}|}{n h_{\ell_0}^d} \\
            &\quad+ \sqrt{\frac{2^{d+2} D^{\star}(s,d) \mathfrak{F}_{s,L}}{\min\left\{1, \frac{\lambda_{\bullet}^2(s,d)}{|\log \lambda_{\bullet}(s,d)}|\right\}} \times \frac{1}{n h_{\ell_0}^d}} + \frac{2 \sqrt{D^{\star}(s,d)}}{\min\left\{1, \frac{\lambda_{\bullet}(s,d)}{|\log \lambda_{\bullet}(s,d)}|\right\}} \times \frac{1}{n h_{\ell_0}^d}.
        \end{aligned}
    \end{equation*}
    As $n\to\infty$, note that $h_{\ell_0}\to 0$, $\smash{nh_{\ell_0}^d\to\infty}$ and $\smash{|\log h_{\ell_0}| / (nh_{\ell_0}^d)\to 0}$ by~\eqref{eq:h.asymptotics}.
    Therefore, the first term on the right-hand side of the last equation leads the asymptotic behavior of the upper bound on $\mathbb{U}_{\gamma_0}^{\star}$.
    In particular, for $n$ large enough, the second, third and fourth terms are all bounded from above by the first term, so that
    \begin{equation*}
        \mathbb{U}_{\gamma_0}^{\star} \leq 4 \, \sqrt{\frac{\{C^{\star}(s,d)\}^2 \mathfrak{F}_{s,L} d\delta}{2^{d-1}} \times \frac{|\log h_{\ell_0}|}{n h_{\ell_0}^d}} = \frac{C^{\star}(s,d) \sqrt{\mathfrak{F}_{s,L} d\delta}}{2^{(d-5)/2}} \times \left(\frac{|\log h_{\ell_0}|}{n h_{\ell_0}^d}\right)^{1/2}.
    \end{equation*}
    After straightforward calculations, this yields
    \begin{equation}\label{eq:U.star.param.0}
        \mathbb{U}_{\gamma_0}^{\star} \lesssim C^{\star}(s,d) \sqrt{d\delta} \, \, \mathfrak{F}_{s,L}^{~s/(2s+d)} (\mathfrak{L}_{s,L}^{\star})^{d/(2s+d)} \left(\frac{\log n}{n}\right)^{s/(2s+d)},
    \end{equation}
    where $\lesssim$ is independent of all parameters.

    By taking the upper bounds on $\mathcal{B}_{\gamma_0}$ and $\mathbb{U}_{\gamma_0}^{\star}$ in~\eqref{eq:bias.param.0} and~\eqref{eq:U.star.param.0} back into~\eqref{eq:oracle.adaptivity.simple.domains}, we get
    \begin{equation*}
        \pwrisk(\hat{f}, f) \lesssim \mathfrak{c}(\delta,\finfty) \, e^s \, C^{\star}(s,d) \sqrt{d\delta} \, \, \mathfrak{F}_{s,L}^{~s/(2s+d)} (\mathfrak{L}_{s,L}^{\star})^{d/(2s+d)} \left(\frac{\log n}{n}\right)^{s/(2s+d)},
    \end{equation*}
    where $\mathfrak{c}(\delta,\finfty) = 8 + 2 \{(\delta-1) / \finfty\}^{1/2}$ and $\lesssim$ is again independent of all parameters.
    This proves the upper bound.

    Next, let us prove that the admissible collection $\phi$ defined in~\eqref{eq:ARC.simple.geometry} is the ARC. To do so, we will use the result established in Section~5.6.1 of \cite{Rebelles2015supp}. In particular, it is sufficient to prove that for any $(s,L)\in \mathcal{K}$ and $(s',L')\in \mathcal{K}$ such that $s<s'$, there are two functions $f_0$ and $f_1$ that satisfy the following properties:
    \begin{enumerate}
        \item $f_0$ and $f_1$ are density functions that belong to $\Sigma(s',L')$ and $\Sigma(s,L)$, respectively.
        \item $|f_1(t) - f_0(t)| \asymp \phi_n(s,L) = (n^{-1} \log n)^{s/(2s+d)}$. (Here, $\asymp$ means both $\lesssim$ and $\gtrsim$ hold.)
        \item For any positive real $\tau$ such that  ${s}/{(2s+d)} < \tau < {s'}/{(2s'+d)}$, we have
            \begin{equation*}
                \EE_{f_0,n}\left\{\prod_{i=1}^n \frac{f_1(X_i)}{f_0(X_i)}\right\}^2\lesssim n^\tau \phi_n(s,L).
            \end{equation*}
    \end{enumerate}
    Similarly to the proof of Proposition~\ref*{prop:minimaxity.simple.domains}, let us define
    \begin{equation*}
        f_0(x) = \frac{\1_{\domain}(x)}{\mathrm{Leb}(\domain)}
        \quad \text{and} \quad
        f_1(x) = f_0(x) + A h^s \tilde{\psi}_h(x-t) \1_{\domain}(x),
    \end{equation*}
    where the quantities $\tilde{\psi}_h$ and $A$ have already been introduced in the proof of Proposition~\ref*{prop:minimaxity.simple.domains}, and $h$ is defined in a slightly different way by
    \begin{equation*}
        h = \left[\frac{1}{\frac{8 A^2}{\mathrm{Leb}(\domain)} \left\{\int_{\Delta} \psi^2(u) \rd u + 1\right\}} \times \frac{\log n^\varrho}{n}\right]^{1/(2s+d)}, \quad 0<\varrho < \frac{s'}{2s'+d} - \frac{s}{2s+d}.
    \end{equation*}
    It is straightforward to verify the first and second points by employing arguments that are virtually identical to those presented in the proof of Proposition~\ref*{prop:minimaxity.simple.domains}. For the third point, we have, by the same line of reasoning that led us to~\eqref{eq:end.proof.Prop.4.5},
    \begin{equation}\label{eq:bound.second.moment.adaptive.simple.domains}
        \begin{aligned}
            \EE_{f_0,n}\left\{\prod_{i=1}^n \frac{f_1(X_i)}{f_0(X_i)}\right\}^2
            &\leq \exp\left[\frac{8 A^2}{\mathrm{Leb}(\domain)} \left\{\int_{\Delta} \psi^2(u) \rd u + 1\right\} n h^{2s+d}\right] \\
            &\leq n^\varrho \leq n^{\varrho+s/(2s+d)}\left(\frac{\log n}{n}\right)^{s/(2s+d)} \leq n^{\tau} \phi_n(s,L),
        \end{aligned}
    \end{equation}
    where $\tau = \varrho+s/(2s+d)$. This shows that three conditions above, or equivalently the conditions of Lemma~4 of \cite{Rebelles2015supp}, are satisfied with $a_n^{-1} \asymp \phi_n(s,L)$ and $b_n = n^{\tau}$, which implies that the collection $\phi$ defined in~\eqref{eq:ARC.simple.geometry} is the ARC.
\end{proof}

% Manually add a bookmark for the 'proof' environment without creating a subsection entry
\phantomsection
\addcontentsline{toc}{subsection}{Proof of Proposition~\ref*{prop:minimaxity.polynomial.sectors}}

\begin{proof}[Proof of Proposition~\ref*{prop:minimaxity.polynomial.sectors}]
    Let $\rho = 1$ so that $h\in (0,1]$.
    Also, throughout the proof, let $\gamma_k=(m,h)$ with $m=\llfloor s_k\rrfloor$.
    Recall the definition of $w_h$ from~\eqref{eq:w.h} and apply it to the domain $\domain_k$ in~\eqref{eq:domain.k} to obtain
    \begin{equation}\label{eq:W.h.calculation}
        W_h
        =\int_{\R^d} w_h(u) \rd u
        = h^{-2}\int_{0}^h\int_0^{x^k} 1 \rd y \rd x \\
        = \frac{h^{k-1}}{k+1}.
    \end{equation}
    %Define $m_k=\lfloor s_k\rfloor$. Let $\gamma=(m_k,h)$.
    Recall the definition of the Gram matrix $\mathcal{B}_{\gamma_k}$ from~\eqref{eq:matrix.B.gamma}.
    By the simple change of variables $(\xi,\eta) = (x/h,y/h)$ compared with the previous integral, we have, for any $v\in \R^{D_{m}}$,
    \begin{equation}\label{eq:v.top.B.v.decomp}
        \begin{aligned}
            v^{\top} \mathcal{B}_{\gamma_k} v
            &=\int_0^1\int_0^{h^{k-1}\xi^k} \left\{v^{\top} \Phi_{m,1}(\xi,\eta)\right\}^2 \rd \eta\rd \xi \\
            &= \sum_{|\alpha| \leq m} \sum_{|\beta| \leq m} v_{\alpha} v_{\beta} \int_0^1\int_0^{h^{k-1}\xi^k} \xi^{\alpha_1+\beta_1}\eta^{\alpha_2+\beta_2} \rd \eta\rd \xi \\
            &= \sum_{|\alpha| \leq m} \sum_{|\beta| \leq m} v_{\alpha} v_{\beta} \int_0^1 \xi^{\alpha_1+\beta_1}
            \frac{(h^{k-1}\xi^k)^{{\alpha_2+\beta_2+1}}}{\alpha_2+\beta_2+1}\rd \xi \\
            &= \sum_{|\alpha| \leq m} \sum_{|\beta| \leq m} c_k(\alpha,\beta)v_{\alpha} v_{\beta} h^{(k-1)(\alpha_2+\beta_2+1)},
        \end{aligned}
    \end{equation}
    where
    \begin{equation*}
        c_k(\alpha,\beta) = \left[(\alpha_2+\beta_2+1)\{\alpha_1+\beta_1 + k(\alpha_2+\beta_2+1) + 1\}\right]^{-1} > 0.
    \end{equation*}
    Denote $A_{\alpha,\beta}(v) = c_k(\alpha,\beta)v_{\alpha} v_{\beta}$ and $\alpha^{\star} = \beta^{\star} = (0, m)$, where the dependence in $k$ is omitted. By~\eqref{eq:v.top.B.v.decomp}, we can write, for all $v\in \R^{D_{m}}\setminus\{(0,\ldots,0)\}$,
    \begin{equation*}
        v^{\top} \mathcal{B}_{\gamma_k} v
        = \sum_{|\alpha| \leq m} \sum_{|\beta| \leq m} A_{\alpha,\beta}(v)  h^{(k-1)(\alpha_2+\beta_2+1)}
        = h^{(k-1)(2m+1)} \psi_v(1/h),
    \end{equation*}
    where $\psi_v$ is a polynomial of degree $(k-1)(2m)$ which satisfies $\psi_v(0) = A_{\alpha^{\star}, \beta^{\star}}(v) > 0$. In particular, for any $v\in \R^{D_{m}}$ such that $v^{\top} v=1$, we have $v^{\top} \mathcal{B}_{\gamma_k} v > 0$ and thus $\psi_v(x) > 0$ for any $x\in [0,\infty)$. Therefore, $\Psi_{\star}(v) = \min_{x\in [0,\infty)} \psi_v(x) > 0$ exists and is a continuous function of the coefficients of $\psi_v$, i.e., is a continuous function of $v$. Since $\mathbb{S}^1=\{v\in \R^{D_{m}}: v^{\top} v=1\}$ is a compact set, $\Psi_{\star} = \min_{v\in \mathbb{S}^1} \Psi_{\star}(v)>0$ exists. Finally, we deduce
    \begin{equation}\label{eq:minoration.lambda}
        \lambda_{\gamma_k} = \min_{v\in \mathbb{S}^1} v^{\top} \mathcal{B}_{\gamma_k} v \geq \Psi_{\star} h^{(k-1)(2m+1)} > 0.
    \end{equation}
    Since we assume $f\in \Sigma(s,L)$, a straightforward consequence of Definition~\ref*{def:Sigma.s.L} is that for all $\beta\in (0,s]$, there exists $\tilde{L}_{s,L,\beta} > 0$ such that $f\in \Sigma(\beta,\tilde{L}_{s,L,\beta})$. Indeed, if $f\in \Sigma(s,L)$, then there exists a polynomial $q = \sum_{|\alpha|\leq \llfloor s \rrfloor} q_{\alpha} \varphi_{\alpha}(\cdot)\in \mathcal{P}_{\llfloor s \rrfloor}$ such that, for any $u\in \voisin(1)$,
    \begin{equation*}
        \left|f(t+u) - \sum_{|\alpha|\leq \llfloor s \rrfloor} q_{\alpha} \varphi_{\alpha}(u)\right| \leq L\|u\|_{\infty}^s.
    \end{equation*}
    This implies that $f\in \Sigma(\beta,\tilde{L}_{s,L,\beta})$ since
    \begin{equation*}
        \begin{aligned}
            \left|f(t+u) - \sum_{|\alpha|\leq \llfloor \beta \rrfloor} q_{\alpha} \varphi_{\alpha}(u)\right|
            &\leq L\|u\|_{\infty}^s + \sum_{\llfloor \beta \rrfloor < |\alpha|\leq \llfloor s \rrfloor} |q_{\alpha}| |\varphi_{\alpha}(u)| \\
            &\leq \left(L + \sum_{\llfloor \beta \rrfloor < |\alpha|\leq \llfloor s \rrfloor} |q_{\alpha}|\right) \|u\|_{\infty}^{\beta}
            \eqqcolon \tilde{L}_{s,L,\beta} \|u\|_{\infty}^{\beta},
        \end{aligned}
    \end{equation*}
    where the last inequality follows from the fact that, for all $x\in [0,1]$, the map $\beta\mapsto x^{\beta}$ is decreasing on $(0,\infty)$ (notice that $u\in \voisin(1)\subseteq [-1,1]^d$ implies $x = \|u\|_{\infty}\in [0,1]$).
    In particular, for our choice of smoothness $\beta = s_k$ in~\eqref{eq:s.k.and.theta.k}, we have $f\in \Sigma(s_k,L_k)$, where we write $L_k = \tilde{L}_{s,L,s_k}$ for short.
    Therefore, by Proposition~\ref*{prop:control.bias}, Proposition~\ref*{prop:control.stochastic.term} and Lemma~\ref{lem:infty.norm.f}, together with the fact that $W_h = h^{k-1}/(k+1)$ from~\eqref{eq:W.h.calculation} and $\lambda_{\gamma_k} \geq \Psi_{\star} h^{(k-1)(2m+1)} > 0$ from~\eqref{eq:minoration.lambda}, we have
    \begin{equation*}
        \begin{aligned}
            \pwrisk(\hat{f}_{\gamma_k}, f)
            &\leq \frac{W_h \sqrt{D_{\llfloor s_k\rrfloor}}}{\lambda_{\gamma_k}} \times \left\{\mathfrak{L}_{\llfloor s_k\rrfloor,s_k,L_k}^2 \, h^{2s_k} + \frac{\mathfrak{F}_{s_k,L_k}}{W_h nh^2}\right\}^{1/2} \\
            &\lesssim_{s,L,k} h^{-2\llfloor s_k\rrfloor (k-1)} \times \left\{\mathfrak{L}_{\llfloor s_k\rrfloor,s_k,L_k}^2 \, h^{2s_k} + \frac{(k+1)\mathfrak{F}_{s_k,L_k}}{nh^{k+1}}\right\}^{1/2}.
        \end{aligned}
    \end{equation*}
    By taking $h=n^{-1/(2s_k+k+1)}$, the upper bound~\eqref{eq:rebroussement.upper.bound} follows.
    To be more transparent, our choice of smoothness $s_k\in (0,s]$ in~\eqref{eq:s.k.and.theta.k} is the one that minimizes the bound in the last equation if we were to replace $s_k$ by a general smoothness parameter $\beta\in (0,s]$.

    To prove the lower bound, we follow the scheme of proofs laid out in Sections~2.2~and~2.3 of \cite{Tsybakov2009supp}.
    Let $(s,L)\in (0,1] \times (0,\infty)$ and $k>1$ be given. Define the auxiliary function
    \begin{equation*}
        \tilde{\psi}(x) =  \exp\left(-\frac{1}{1-x^2}\right) \1_{(-1,1)} (x),
        \quad x\in \R,
    \end{equation*}
    together with the positive real constant
    \begin{equation*}
        b_{k} = \frac{\int_0^{1} x^k(1-x)\rd x}{\int_0^1 \tilde{\psi}(4x-3)x^k(1-x) \rd x}.
    \end{equation*}
    Now, consider the function
    \begin{equation*}
        \psi_k(x) = \{1 - b_{k} \tilde{\psi}(4x-3)\}(1-x) \1_{[0,1]}(x).
    \end{equation*}
    This is a bounded function which satisfies $\psi_k(0)=1$, $\psi_k(1)=0$ and $\smash{\int_0^1 x^k \psi_k(x) \rd x = 0}$.
    Moreover, $\psi_k$ is $s$-H\"older-continuous on $[0,1]$ for some positive Lipschitz constant $L_{k,s} > 0$.
    We now consider, for $u=(u_x,u_y)\in \domain_k$,
    \begin{equation}\label{eq:fct-LB}
        f_0(u) = \frac{\1_{\domain_k}(u)}{\mathrm{Leb}(\domain_k)}
        \quad \text{and} \quad
        f_1(u) = f_0(u) + 2\rho_n \psi_k\left(\frac{u_x}{h_n}\right) \1_{\voisin(h_n)}(u),
    \end{equation}
    where
    \begin{equation}\label{eq:def.h.n.rho.n}
        h_n = n^{-1/(2s+k+1)}
        \quad \text{and} \quad
        \rho_n = \{L / (2 L_{k,s})\} h_n^s.
    \end{equation}
    Observe that both $f_0$ and $f_1$ are density functions on $\domain_k$, for $n$ large enough. This is obvious for the function $f_0$. Let us prove this assertion for $f_1$. First, note that for $u\in \domain_k$,
    \begin{equation*}
        f_1(u) \geq \frac{1}{\mathrm{Leb}(\domain_k)} - 2\rho_n \|\psi_k\|_{\infty},
    \end{equation*}
    which is positive for large $n$.
    Using the change of variable $\xi = x / h_n$, it can also be seen that
    \begin{equation*}
        \begin{aligned}
            \int_{\domain_k} f_1(u) \rd u
            &= 1 + 2 \rho_n \int_0^{h_n} \psi_k\left(\frac{x}{h_n}\right) \left(\int_0^{x^k} 1 \rd y\right) \rd x \\[-1mm]
            &= 1 + 2 h_n^{k+1} \rho_n \int_0^1 \xi^k \psi_k(\xi) \rd \xi \\
            &= 1.
        \end{aligned}
    \end{equation*}

    Using the methodology developed in Section~2.5 of \cite{Tsybakov2009supp}, the proof of the lower bound~\eqref{eq:rebroussement.lower.bound} boils down to proving the following assertions:
    \begin{enumerate}
        \item $\{f_0, f_1\}\subseteq \Sigma(s,L)$.
        \item $|f_1(0) - f_0(0)| \geq 2\rho_n$.
        \item $n D_{\mathrm{KL}}(f_1, f_0)\leq A$ for some positive real constant $A = A(k,s,L) > 0$, where $D_{\mathrm{KL}}(\cdot,\cdot)$ denotes the Kullback-Leibler divergence between the associated probability measures.
    \end{enumerate}
    To prove the first assertion, notice that the function
    \begin{equation*}
        u = (u_x, u_y)\in \domain_k \mapsto 2\rho_n \psi_k\left(\frac{u_x}{h_n}\right) \1_{\voisin(h_n)}(u)
    \end{equation*}
    belongs to $\Sigma(s,L)$, using the definitions of $h_n$ and $\rho_n$ in~\eqref{eq:def.h.n.rho.n} and the aforementioned fact that $\psi_k$ is $s$-H\"older-continuous on $[0,1]$ for some Lipschitz constant $L_{k,s}>0$.
    The second assertion is satisfied because of~\eqref{eq:fct-LB} and the fact that $\psi_k(0) = 1$.
    It remains to prove the third assertion.
    Since $f_1(u)/f_0(u) = 1$ for all $u\in \domain_k \setminus \voisin(h_n)$, observe that
    \begin{equation}\label{eq:n.DKL.A.B.bound}
        n D_{\mathrm{KL}}(f_1, f_0)
        = n\int_{\voisin(h_n)}f_1(u) \log \left\{\frac{f_1(u)}{f_0(u)}\right\} \rd u
        = B_1 + B_2,
    \end{equation}
    where
    \begin{equation*}
        \begin{aligned}
            B_1 &= \frac{n}{\mathrm{Leb}(\domain_k)}\int_{\voisin(h_n)}\log \left\{\frac{f_1(u)}{f_0(u)}\right\} \rd u, \\
            B_2 &= 2n\rho_n\int_{\voisin(h_n)}\psi_k\left(\frac{u_x}{h_n}\right) \log \left\{\frac{f_1(u)}{f_0(u)}\right\} \rd u.
        \end{aligned}
    \end{equation*}
    Since $\psi_k$ is bounded and $\rho_n$ tends to zero as $n\to \infty$, let $n$ be large enough so that
    \begin{equation*}
        \left|2\rho_n\mathrm{Leb}(\domain_k)\psi_k\left(\frac{u_x}{h_n}\right)\right|\leq \frac{1}{2}.
    \end{equation*}
    Now, using
    \begin{equation*}
       \int_{\voisin(h_n)} \psi_k\left(\frac{u_x}{h_n}\right) \rd u = 0
       \qquad \text{and} \qquad
       |\log(1+x) - x|\leq 2 x^2 \quad\text{for } |x| \leq 1/2,
    \end{equation*}
    we have
    \begin{equation}\label{eq:bound.B1}
        \begin{aligned}
            \left|B_1 \right|
            &\leq \frac{n}{\mathrm{Leb}(\domain_k)} \int_{\voisin(h_n)} \left|\log\left\{\frac{f_1(u)}{f_0(u)}\right\}
            - 2\rho_n \mathrm{Leb}(\domain_k) \psi_k\left(\frac{u_x}{h_n}\right)\right| \rd u \\
            &\leq 8 n \rho_n^2 \mathrm{Leb}(\domain_k) \int_{\voisin(h_n)} \psi_k^2\left(\frac{u_x}{h_n}\right) \rd u \\
            &\leq 8 n \rho_n^2 h_n^{k+1} \mathrm{Leb}(\domain_k) \int_0^1 \xi^k \psi_k^2(\xi) \rd \xi.
        \end{aligned}
    \end{equation}
    Similarly, but instead using
    \begin{equation*}
        |\log(1+x)|\leq |x| \quad \text{for } x > 0,
    \end{equation*}
    we have
    \begin{equation}\label{eq:bound.B2}
        \begin{aligned}
            |B_2|
            &\leq 2 n \rho_n \int_{\voisin(h_n)} \psi_k\left(\frac{u_x}{h_n}\right) \left|\log\left\{\frac{f_1(u)}{f_0(u)}\right\}\right| \rd u \\
            &\leq 4 n \rho_n^2 \mathrm{Leb}(\domain_k) \int_{\voisin(h_n)} \psi_k^2\left(\frac{u_x}{h_n}\right) \rd u \\
            &\leq 4 n \rho_n^2 h_n^{k+1} \mathrm{Leb}(\domain_k) \int_0^1 \xi^k \psi_k^2(\xi) \rd \xi.
        \end{aligned}
    \end{equation}
    Taking the above bounds~\eqref{eq:bound.B1} and~\eqref{eq:bound.B2} together in~\eqref{eq:n.DKL.A.B.bound}, we obtain
    \begin{equation*}
        n D_{\mathrm{KL}}(f_1, f_0) \leq 12 \, \mathrm{Leb}(\domain_k) \left\{\int_0^1 \xi^k \psi_k^2(\xi) \rd \xi\right\} n\rho_n^2h_n^{k+1}.
    \end{equation*}
    Using the definition of $\rho_n$ and $h_n$ in~\eqref{eq:def.h.n.rho.n}, the right-hand side of the last equation is easily seen to be bounded, which proves the third point. Moreover, we have $\rho_n \asymp n^{-s/(2s+k+1)} = n^{-\theta_k(s)}$ since $s_k = s$ for $s\in (0,1]$; see Item~1 of Remark~\ref*{rem:other.complicated.domains}. The lower bound \eqref{eq:rebroussement.lower.bound} follows.
\end{proof}

% Manually add a bookmark for the 'proof' environment without creating a subsection entry
\phantomsection
\addcontentsline{toc}{subsection}{Proof of Theorem~\ref*{thm:adaptivity.polynomial.sectors}}

\begin{proof}[Proof of Theorem~\ref*{thm:adaptivity.polynomial.sectors}]
    Let $(s,L)\in (0,1] \times (0,\infty)$ and $k>1$ be given. Using~\eqref{eq:minoration.lambda} with the polynomial degree set to $m=0$, there exists a positive real constant $\Psi_{\star} = \Psi_{\star}(k)$ which depends on $k$ such that, for any $\gamma=(0,h)\in \Gamma_n$,
    \begin{equation}\label{eq:info}
        \lambda_{\gamma}\ge \Psi_{\star}h^{k-1} > 0.
    \end{equation}
     Hence, condition~\eqref{eq:domain.lambda} is satisfied and Theorem~\ref*{thm:oracle.result} can be applied to obtain \begin{equation}\label{eq:oracle.adaptivity.polynomial.sectors}
        \pwrisk(\hat{f}, f)
        \leq \min_{\gamma\in \Gamma_{\!n}} \left[5 \mathbb{B}_{\gamma} + \left\{3 + 2 \sqrt{(\delta-1) / \finfty}\right\} \mathbb{U}_{\gamma}^{\star}\right] + \mathcal{O}\left\{\sqrt\frac{(\log n)^d}{n}\right\}.
    \end{equation}

    Now, similarly to the proof of Theorem~\ref*{thm:adaptivity.simple.domains}, we define
    \begin{equation*}
        \ell_0 = \left\lfloor \frac{1}{2s+k+1} \log\left(\frac{n}{\log n}\right) + \log \rho \right\rfloor
        \quad \text{and} \quad
        m_{\ell_0} = 0.
    \end{equation*}
    In the following, consider $n$ large enough so that $\ell_0\geq 1$.
    This choice of $\ell_0$ implies
    \begin{equation*}
        1 \leq h_{\ell_0}\left(\frac{\log n}{n}\right)^{-1/(2s+k+1)} \leq e.
    \end{equation*}
    By \eqref{eq:W.h.calculation}, we know that
    \begin{equation}\label{eq:bound.W.h.over.lambda.gamma}
        \frac{W_h}{\lambda_{\gamma}} = \frac{h^{k-1} / (k+1)}{\lambda_{\gamma}} \leq \frac{1}{(k+1) \Psi_{\star}}.
    \end{equation}
    Therefore, with the notation $\gamma_0=(0,h_{\ell_0})$, Proposition~\ref*{prop:control.bias} yields
    \begin{equation}\label{eq:bias.rebrou}
        \begin{aligned}
            \mathbb{B}_{\gamma_0}
            &= \max_{\substack{\gamma\in \Gamma_{\!n} \\ \gamma \preceq \gamma_0}} \left|\EE\left\{\hat{f}_{\gamma}(t)\right\} - f(t)\right|
            \leq \max_{\substack{\gamma\in \Gamma_{\!n} \\ \gamma \preceq \gamma_0}} \frac{W_h\sqrt{D_0}}{\lambda_{\gamma}} \times \mathfrak{L}_{0,s,L} \, h^s \\
            &\leq C_0(s,L,k) \max_{h\leq h_{\ell_0}} h^s
            \leq C_0(s,L,k) \, e^s \, \left(\frac{\log n}{n}\right)^{s/(2s+k+1)},
        \end{aligned}
    \end{equation}
    where $C_0(s,L,k) > 0$ is a positive real constant that depends on $s$, $L$ and $k$. Following the same line of argument as in the proof of Theorem~\ref*{thm:adaptivity.simple.domains}, we obtain, using~\eqref{eq:info}, \eqref{eq:bound.W.h.over.lambda.gamma} and Lemma~\ref{lem:infty.norm.f},
    \begin{equation}\label{eq:var.rebrou}
        \begin{aligned}
            \mathbb{U}_{\gamma_0}^{\star}
            &\leq C_1(s,L,k) \left(\sqrt{\frac{|\log h_{\ell_0}|}{nh_{\ell_0}^{k+1}}}+ \frac{|\log h_{\ell_0}|}{nh_{\ell_0}^{k+1}} + \sqrt{\frac{1}{nh_{\ell_0}^{k+1}}}+ \frac{1}{nh_{\ell_0}^{k+1}}\right) \\
            &\leq C_2(s,L,k) \left(\frac{\log n}{n}\right)^{s/(2s+k+1)},
        \end{aligned}
    \end{equation}
    where $C_1(s,L,k)$ and $C_2(s,L,k)$ are positive real constants that depend on $s$, $L$ and $k$.
    By plugging the bounds~\eqref{eq:bias.rebrou} and~\eqref{eq:var.rebrou} back into \eqref{eq:oracle.adaptivity.polynomial.sectors}, we get the upper bound~\eqref{eq:rate.rebrou.adapt}.

    To prove that the collection $\phi = \{\phi_n(s,L) : (s,L)\in (0,1]\times(0,\infty), n\in \N\}$ is the ARC, we follow the same line of argument as in the proof of the adaptive lower bound in Theorem~\ref*{thm:adaptivity.simple.domains}, but the density functions $f_0\in \Sigma(s',L')$ and $f_1\in \Sigma(s,L)$ are replaced by those defined in~\eqref{eq:fct-LB} together with the bandwidth
    \begin{equation*}
        h = \left\{\frac{1}{\{L / (2 L_{k,s})\}^2 \mathrm{Leb}(\domain_k) \int_0^1 \xi^k \psi_k^2(\xi) \rd \xi} \times \frac{\log n^\varrho}{n}\right\}^{1/(2s+k+1)},
    \end{equation*}
    where $0 < \varrho < s'/(2s'+k+1) - s/(2s+k+1)$. To understand the choice of $h$ here, simply look at \eqref{eq:I.develop} with $A = L / (2 L_{k,s})$ and use the inequality $(1+x) \leq e^x$, which is valid for $x > 0$, to obtain a bound analogous to \eqref{eq:bound.second.moment.adaptive.simple.domains}. This concludes the proof.
\end{proof}

\begin{acks}[Acknowledgments]
The authors express their sincere gratitude to the anonymous reviewers for their constructive comments. The quality and depth of the questions posed by the referees have significantly enhanced the caliber of this paper. It has been possible to conduct this research in part thanks to the support provided by Calcul Qu\'ebec and the Digital Research Alliance of Canada.
\end{acks}

\begin{funding}
K.\ Bertin is supported by FONDECYT regular grants 1221373 and 1230807 from ANID-Chile, and the Centro de Modelamiento Matemático (CMM) BASAL fund FB210005 for centers of excellence from ANID-Chile. N.\ Klutchnikoff acknowledges support from FONDECYT regular grant 1221373 from ANID-Chile. F.\ Ouimet was supported by a CRM-Simons postdoctoral fellowship from the Centre de recherches math\'ematiques (Montr\'eal, Canada) and the Simons Foundation. F.\ Ouimet is currently funded through a contribution to Christian Genest's research program from the Trottier Institute for Science and Public Policy.
\end{funding}

\bibliographystyle{imsart-nameyear}
\bibliography{bib}

\end{document}